\numberwithin{equation}{section}
\newtheorem{theorem}[subsection]{Theorem}
\newtheorem{proposition}[subsection]{Proposition}
\newtheorem{conjecture}[subsection]{Conjecture}
\newtheorem{corollary}[subsection]{Corollary}
\newtheorem{lemma}[subsection]{Lemma}
\theoremstyle{definition}
\newtheorem{remark}[subsection]{Remark}
\newtheorem{definition}[subsection]{Definition}
\newtheorem*{claim}{Claim}
\newtheorem*{claimproof}{Proof of Claim}
\newcommand{\rS}{{\rm S}}
\newcommand{\rH}{{\rm H}}
\newcommand{\tY}{\tilde{Y}}
\newcommand{\tI}{\tilde{I}}
\newcommand{\cO}{\mathcal{O}}
\newcommand{\cA}{\mathcal{A}}
\newcommand{\cE}{\mathcal{E}}
\newcommand{\cF}{\mathcal{F}}
\newcommand{\cU}{\mathcal{U}}
\newcommand{\PP}{\mathbb{P}}
\newcommand{\QQ}{\mathbb{Q}}
\newcommand{\ZZ}{\mathbb{Z}}
\newcommand{\LL}{\mathbb{L}}
\newcommand{\GG}{\mathbb{G}}
\newcommand\CH{\operatorname{CH}}
\newcommand\Ext{\operatorname{Ext}}
\newcommand\Gr{\operatorname{Gr}}
\newcommand\pr{\operatorname{pr}}
\newcommand\ch{\operatorname{ch}}
\begin{document}

\title[One-cycles on Gushel-Mukai]{One-cycles on Gushel-Mukai fourfolds and the Beauville-Voisin filtration}
\date{\today}

\author{Ruxuan Zhang}
\address{Ruxuan Zhang, Shanghai Center for Mathematical Sciences, Fudan University, Jiangwan Campus, Shanghai, 200438, China}
\email{rxzhang18@fudan.edu.cn}

\begin{abstract}
We prove that the invariant locus of the involution associated with a general double EPW sextic is a constant cycle surface and introduce a filtration on $\CH_1$ of a Gushel-Mukai fourfold. We verify the sheaf/cycle correspondence for sheaves supported on low-degree rational curves, parallel to the cubic fourfolds case of Shen-Yin's work.
\end{abstract}

\subjclass[2020]{14J42 , 14C15}
\keywords{Beauville-Voisin filtration, irreducible holomorphic symplectic variety, Gushel-Mukai fourfold}

\maketitle

\section{Introduction}
%\subsection{Gushel-Mukai fourfolds and EPW sextics}
A Gushel-Mukai (GM) fourfold is a smooth prime Fano fourfold $X\subset \PP^8$ of degree $10$ and index $2$. Throughout the paper, a GM fourfold is always an ordinary GM fourfold, 
which can be obtained as a smooth dimensionally transverse intersection 
$$ X=\Gr(2,5)\cap \PP^8\cap Q\subset \PP^9,$$ where $\Gr(2,5)\subset \PP^9$ is the Pl\"ucker embedding, $\PP^8$ is a linear subspace and $Q$ is a quadric hypersurface. 

GM fourfolds share many properties with cubic fourfolds. One of the distinguished facts is that there is a parallel between GM and cubic fourfolds from a categorical viewpoint. 
Similarly to the semiorthogonal decomposition of the derived category of a cubic fourfold, Kuznetsov and Perry showed in \cite{kuznetsov2018derived} that the derived category of a GM fourfold admits a semiorthogonal decomposition:
$${\rm D}^b(X)= \langle \cA_X,\cO_X,\cU_X^{\vee},\cO_X(1),\cU_X^{\vee}(1)\rangle,$$ where $\cU_X$ is the tautological bundle of $\Gr(2,5)$ restricted to $X$ and $\cA_X$ is a K3 category.

Another fact is that a GM fourfold has an associated irreducible holomorphic symplectic (IHS) variety, which is called the dual double EPW sextic, compared with the Fano variety of lines associated with a cubic fourfold, see \cite{debarre2018gushel}\cite{o2006irreducible}\cite{iliev2011fano} for details.

The Fano variety of lines of a cubic fourfold and the dual double EPW sextic of a GM fourfold both admit constructions via moduli spaces of stable objects and Hilbert schemes of rational curves.
In \cite[Proposition 5.17]{perry2019stability}, it has been shown that at least one of the double EPW sextic and the dual double EPW sextic can be realized as a moduli space of stable objects of $\cA_X$ and conjecturally both are. 
In \cite{iliev2011fano}, Iliev and Manivel gave a more geometrical description of the dual double EPW sextic.  For a general GM fourfold $X$,
let $F(X)$ be the Hilbert scheme of conics on $X$ and $\tY^\vee$ be the associated dual double EPW sextic of $X$, there exists a morphism $$F(X)\rightarrow \tY^\vee$$ such that a general fiber is $\PP^1$.

Quite generally, for a $2n$-dimensional IHS variety $M$, Voisin predicted in \cite{voisin2016remarks} that there exists a filtration of the Chow ring $\CH^*(M)$ (called Beauville–Voisin filtration), which can be viewed as an opposite to the conjectural Bloch-Beilinson filtration.
For 0-cycles, the filtration $\rS_\bullet\CH_0(M)$ is defined by
$$\rS_{i}\CH_0(M):=\left<x\in M|~\dim O_y\geq n-i\right>,$$ where $O_y$ is the orbit of $y$ under the rational equivalence (here we follow the definition in \cite{shen2020categories}, which is opposite to Voisin's original definition). Let $Z\subset M$ be a subvariety. If any two points on $Z$ have the same class in $\CH_0(M)$, we call that $Z$ is a constant cycle variety on $M$.
In particular, for an IHS variety of dimension $4$, the filtration is determined by constant cycle surfaces and uniruled divisors.
Let $\rS_iM\subset M$ be the set of points with dim $O_y\geq n-i$. Voisin conjectured in \cite[Conjecture 0.4]{voisin2016remarks}  that
\begin{equation}\label{vconj}
    \mathrm{dim}~\rS_i M= n+i.
\end{equation}
It is closely related to the existence of algebraically coisotropic subvarieties on $M$.

Let $M$ be a nonsingular projective moduli space of stable objects on a K3 category $\cA$, then it is an IHS variety in many cases.
Shen and Yin predicted that all the Beauville–Voisin filtrations associated with different moduli spaces of stable objects on a fixed K3 category should be controlled by a universal filtration on the Grothendieck group of the K3 category, see  \cite[Speculation 0.1]{shen2020categories}. 

When the K3 category is the derived category of a K3 surface $S$ and $M$ is a $2d$-dimensional nonsingular moduli space of stable objects, the O'Grady's filtration on $\CH_0(S)$
$$\rS_i(S) :=\bigcup_{\substack{\deg([z])=i\\z ~\text{effective}}}\{ [z] + \ZZ \cdot [o_S] \}$$
serves as the universal filtration for all moduli space of stable objects. It has been shown in \cite{shen2020derived} that 
$$c_2(\cE)\in \rS_d(S)$$ for any $\cE\in M$ and that $c_2(\cE)\in \rS_i(S)$ implies  $\cE\in \rS_i\CH_0(M)$.

Another example arises when the K3 category is the Kuznetsov component $\cA_X$ of a cubic fourfold $X$. Using the Beauville-Voisin filtration on the Fano variety of lines on $X$, Shen-Yin introduced in \cite{shen2020derived} a filtration $\rS_{\bullet}(X)$ on $\CH_1(X)$ induced by the incidence correspondence. They conjectured that 
the filtration $\rS_{\bullet}(X)$ may serve as a universal filtration on $K_0(\cA_X)$, i.e.,
for $\cE\in \cA_X$, there should be $$c_3(\cE)\in \rS_d(X),$$  where $d=\frac{1}{2}\dim \Ext^1(\cE,\cE)$. This implies  \eqref{vconj}  by a standard argument and they verified the prediction when $\cE$ is supported on low-degree rational curves.

It is natural to ask whether there exists a universal filtration on $\cA_X$ for a GM fourfold, similarly to the case of a cubic fourfold. We first study the Beauville–Voisin filtration on $\CH_0(\tY^\vee)$ by finding a natural constant cycle surface on $\tY^\vee$. The invariant locus $Z$ of the involution associated with the dual double EPW sextic is a surface of general type, whose $\CH_0$ might be huge. However, it was asked whether $Z$ is a constant cycle surface on $\tY^\vee$, see \cite[Remark 2.7]{laterveer2021zero}. 
Our first result is (Theorem \ref{constant}):

\begin{theorem}\label{mainthm1}
The invariant locus $Z$ of the involution associated with a general dual double EPW sextic $\tY^\vee$ is a constant cycle surface. If $\tY^\vee$ is very general, any other constant cycle surface $Z'$ meets $Z$.
\end{theorem}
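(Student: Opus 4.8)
The plan is to treat the two assertions separately, beginning with the constant cycle property. Write $\pi\colon \tY^\vee\to Y^\vee$ for the double cover onto the underlying (dual) EPW sextic and $\iota$ for the covering involution, so that $Z$ is the ramification, mapping isomorphically onto $\mathrm{Sing}(Y^\vee)=Y^\vee_{\ge 2}$. Since $\iota$ is anti-symplectic, the symplectic form restricts to zero on $Z$ and $Z$ is Lagrangian; this is the cheap cohomological input. The substantive point is the triviality of rational equivalence along $Z$. I would first record $\pi^*\pi_*=1+\iota^*$ on $\CH_0(\tY^\vee)$, which for a fixed point $z\in Z$ gives $2[z]=\pi^*\pi_*[z]=\pi^*[\pi(z)]$. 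Hence, writing $o$ for the canonical zero-cycle, the statement ``$Z$ is a constant cycle surface'' is equivalent to the independence of $[\pi(z)]$ from $z$ in $\CH_0(Y^\vee)$, i.e. to the branch surface $Y^\vee_{\ge 2}$ being a single rational equivalence class in the EPW sextic. One cannot shortcut this by invoking $\CH_0(Y^\vee)_{\mathrm{hom}}=0$: the EPW sextic carries a nonzero holomorphic $4$-form, so its $\CH_0$ is genuinely infinite-dimensional.

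To trivialize rational equivalence along $Z$ I would exploit the conic geometry furnished by the Iliev--Manivel fibration $F(X)\to\tY^\vee$ whose general fibre is a $\PP^1$. The points of $Z$ correspond to the special conics lying over $Y^\vee_{\ge 2}$, and the idea is to link any two of them through a connected family of rational curves in $\tY^\vee$ along which the associated zero-cycle is constant; such curves should come from lines on $Y^\vee$ and from one-parameter degenerations of conics on $X$ into reducible or double conics. Transporting this equivalence through the incidence correspondence (this is where the filtration $\rS_\bullet$ on $\CH_1(X)$ enters) would show that all $[z]$, $z\in Z$, agree. The passage from $2([z]-o)=0$ to $[z]=o$ is then immediate, as $\CH_0(\tY^\vee)$ is torsion-free by Roitman's theorem. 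I expect the construction of this connected linking family of rational curves --- the exact analogue of the analysis of lines of the second type on a cubic fourfold in Shen--Yin's work --- to be the main obstacle.

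For the second assertion I would argue purely cohomologically. Any constant cycle surface $Z'$ is isotropic, hence Lagrangian: the collapse of its $\CH_0$-orbit forces, by a Mumford-type argument, $\sigma|_{Z'}=0$, so its class satisfies $\int_{\tY^\vee}[Z']\cup\sigma\cup\bar\sigma=0$. Next I would compute the algebraic part of $\rH^{2,2}$ for very general $\tY^\vee$. As $\tY^\vee$ is of $K3^{[2]}$-type, cup product gives $\rH^4(\tY^\vee,\QQ)\cong \mathrm{Sym}^2\rH^2(\tY^\vee,\QQ)$; when the Picard rank is $1$, generated by the degree-two polarization $h$, a Zarhin-type argument on the transcendental Hodge structure shows that $\rH^{2,2}(\tY^\vee,\QQ)\cap\rH^4$ is spanned by $h^2$ and the Beauville--Bogomolov class $\mathfrak q$, hence has rank $2$. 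Because $q(h,\sigma)=0$, the Fujiki relation gives $\int h^2\sigma\bar\sigma = C\,q(h,h)\,q(\sigma,\bar\sigma)\neq 0$, so $h^2$ is not Lagrangian; the Lagrangian condition therefore cuts out a line in this rank-two space, and since $[Z]$ and $[Z']$ are nonzero rational classes on that line one gets $[Z']=t\,[Z]$ for some $t\in\QQ$.

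Finally I would fix the sign. Testing against the ample class yields $\int h^2[Z']=t\int h^2[Z]$ with both integrals positive, so $t>0$. For the self-intersection, the Lagrangian identification $N_{Z/\tY^\vee}\cong\Omega^1_Z$ gives $Z\cdot Z=c_2(\Omega^1_Z)=e(Z)$, the topological Euler number, which is strictly positive because $Z$ is of general type (combine the Bogomolov--Miyaoka--Yau inequality $c_1^2\le 3c_2$ with $c_1^2>0$, noting that non-minimality only increases $c_2$). Hence $Z\cdot Z'=t\,e(Z)>0$, and in particular $Z\cap Z'\neq\varnothing$. The delicate point in this half is the rank-two computation of the algebraic $(2,2)$-classes, which is precisely where the ``very general'' hypothesis is used.
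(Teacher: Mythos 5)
Your proposal does not prove the first (and main) assertion. After the reformulation $2[z]=\pi^*[\pi(z)]$ and the reduction via torsion-freeness, everything rests on your plan to ``link any two points of $Z$ through a connected family of rational curves along which the associated zero-cycle is constant,'' and you explicitly defer that construction, calling it the main obstacle. That construction \emph{is} the theorem; nothing in the proposal carries it out, and it is not how the paper proceeds either. The paper's mechanism runs through $\CH_1(X)$ rather than through rational curves in $\tY^\vee$: (i) residual-conic geometry --- two conics residual in a hyperplane section of a del Pezzo surface $S_{V_4}$, with such sections parameterized by a $\PP^3$-bundle over $\PP(V_5^{\vee})$ --- shows that $\Psi(y+\iota(y))\in\CH_1(X)$ is independent of $y$, hence $\Psi(2y)$ is constant for $y\in Z$; (ii) the genuinely hard converse, that constancy in $\CH_1(X)$ forces constancy in $\CH_0(\tY^\vee)$, is extracted from a self-intersection formula $I^2=aW+I\cdot A+B+C$ for the incidence correspondence $I\subset F(X)\times F(X)$, the explicit computation of $D_{c}^2$ in terms of the ten pencils of conics on $S_{V_4}$, and a proof that $a\neq 0$ (if $a$ were zero, every pair $(y,\iota(y))$ would give a constant cycle, contradicting Mumford's theorem applied to $\sigma^2$ on the anti-diagonal). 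Neither ingredient, nor any substitute for them, appears in your proposal, so the constant-cycle statement remains unproved.

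Your second half, by contrast, is correct and takes a route genuinely different from the paper's. You argue: constant cycle $\Rightarrow$ $\sigma|_{Z'}=0$ (Mumford) $\Rightarrow$ $[Z']$ lies in the kernel of the functional $\beta\mapsto\int_{\tY^\vee}\beta\cup\sigma\cup\bar\sigma$ on the rank-two space of rational Hodge classes in $\rH^4$ (spanned by $h^2$ and $c_2$ for very general $\tY^\vee$, by a Zarhin-type argument); $h^2$ is not in that kernel by Fujiki, so $[Z']=t[Z]$ with $t>0$ by pairing with $h^2$, and $Z\cdot Z'=t\,e(Z)>0$ via the Lagrangian identification $N_{Z/\tY^\vee}\cong\Omega^1_Z$. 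The paper instead proves that \emph{every} effective $2$-cycle on a very general $\tY^\vee$ is strictly nef, combining Ottem's description of the boundary of the effective cone (generated by the Lagrangian class $Z$ and $c_2$) with Ferretti's intersection numbers $3Z=15h^2-c_2$, $Z^2=192$, $h^4=12$, $h^2Z=40$ and a cone-duality computation. Your argument is softer --- it needs no explicit intersection numbers --- but only shows that Lagrangian surfaces meet $Z$, while the paper gets that any two surfaces meet (and, further, that points of any constant cycle surface represent the class $o$). Either conclusion suffices for the second sentence of the theorem; but this does not repair the gap in the first.
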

The $0$-piece of the Beauville–Voisin filtration of $\tY^\vee$ is represented by a point on $Z$ and the $1$-piece is represented by a point on a uniruled divisor. Theorem \ref{mainthm1} allows us to construct a filtration  $\rS_{\bullet}(X)$ on $\CH_1(X)$ by the incidence correspondence of Iliev and Manivel's construction. For a very general GM fourfold, we predict that the filtration should serve as a universal  filtration on $\cA_X$ in the following way:

The Grothendieck group of $\cA_X$ is shown to be generated by $\pr [\cO_c(1)]$ and $\pr [\cO_p]$ for a very general $X$, where $c$ is a conic, $p$ is a point on $X$ and $\pr$ is the projection $$\pr:K_0(X)\rightarrow K_0(\cA_X).$$
Unlike the cubic fourfold case, the dual double EPW sextic determines the GM fourfold only up to period partners and the Chern class $c_3(\pr [\cO_p])$ of a skyscraper sheaf might lie in different pieces of the filtration with respect to different GM fourfolds. We define a modification $$p:K_0(\cA_X)\rightarrow \CH_1(X)$$ of $c_3$ by dropping the effect of $\pr [\cO_p]$, see Section \ref{S5} for details. 

We propose the following conjecture relating the $K3$ category $\cA_X$ to the filtration $\rS_\bullet(X)$ as a parallel to the case of a cubic fourfold.

\begin{conjecture} \label{mainconj}
For any object $\cE \in \cA_X$, we have
\[
p(\cE) \in \rS_{d(\cE)}(X).
\]
\end{conjecture}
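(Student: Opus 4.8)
The plan is to recast Conjecture \ref{mainconj} as a containment of images of a single correspondence. For a stable object $\cE \in \cA_X$ with Mukai vector $v = v(\cE)$, Serre duality in the $2$-Calabi--Yau category $\cA_X$ gives $\dim \Ext^1(\cE,\cE) = 2 + \langle v,v\rangle$, so $d(\cE) = 1 + \tfrac12\langle v,v\rangle$ and the moduli space $M_v$ of stable objects of class $v$ is an IHS variety of dimension $2d(\cE)$. Every object of $M_v$ carries the same Mukai vector and hence the same value of $d$, so it suffices to prove the uniform containment $p\bigl(\CH_0(M_v)\bigr)\subseteq \rS_{d(\cE)}(X)$, which is exactly what Conjecture \ref{mainconj} asserts for all objects of class $v$ at once. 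I would first show that $p$ factors through $\CH_0(M_v)$ as a correspondence induced by a universal object, reducing the conjecture to a statement purely about this correspondence and the filtration $\rS_\bullet(X)$.

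To control this correspondence I would reduce to the two generators. For very general $X$ the group $K_0(\cA_X)$ is generated by $\pr[\cO_c(1)]$ and $\pr[\cO_p]$, so $v(\cE) = a\,v(\pr[\cO_c(1)]) + b\,v(\pr[\cO_p])$ for integers $a,b$, and $d(\cE) = 1 + \tfrac12\langle v,v\rangle$ becomes an explicit integral quadratic form $Q(a,b)$ computed from the Mukai pairing on the rank-two lattice generated by these two classes. The subtlety is that $c_3$ is not additive on $K_0(\cA_X)$: since only the Chern character is additive, $p(\cE)$ differs from the naive combination $a\,p(\pr[\cO_c(1)]) + b\,p(\pr[\cO_p])$ by intersection products of the lower Chern characters $\ch_1,\ch_2$ of the generators. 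These products are intersections of algebraic classes on $X$ and thus lie in a fixed low piece of $\rS_\bullet(X)$; the modification $p$, which drops the contribution of $\pr[\cO_p]$ (see Section \ref{S5}), is tailored so that the remaining dependence of $p(\cE)$ on $(a,b)$ is compatible with the level $Q(a,b)$.

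The geometric heart of the argument is to compute the filtration level of $p$ on conic classes via the Iliev--Manivel correspondence $F(X)\to\tY^\vee$. By Theorem \ref{mainthm1}, the invariant locus $Z$ is a constant cycle surface and hence represents the $0$-piece of the Beauville--Voisin filtration on $\CH_0(\tY^\vee)$, while the uniruled divisor swept out by the $\PP^1$-fibers represents the $1$-piece. Transporting these pieces back along the conic incidence correspondence identifies, for each conic $c$, the filtration level of $p(\pr[\cO_c(1)])$ in $\rS_\bullet(X)$ with the Beauville--Voisin level of the point of $\tY^\vee$ associated to $c$. The task is then to show that for a general object $\cE$ the lattice-theoretic index $Q(a,b)$ computed above agrees with this geometrically defined level, so that $p(\cE)$ sits in $\rS_{d(\cE)}(X)$.

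The main obstacle is the passage from geometric objects to arbitrary objects of $\cA_X$. The IHS structure of $M_v$ and the conic correspondence are transparent only when $M_v$ parametrizes honest sheaves on curves; for a Mukai vector realized only by genuine two-term complexes, or for objects lying on a wall in the space of Bridgeland stability conditions, one must connect $\cE$ to a geometric object of the same class by wall-crossing. Since $d(\cE)$ is a deformation invariant, the target index is unchanged, but the argument requires that the filtration level of $p$ be preserved across walls --- equivalently, that each wall-crossing act on $\CH_1(X)$ by a correspondence compatible with $\rS_\bullet(X)$. Establishing this cycle-theoretic invariance of $\rS_\bullet(X)$ under wall-crossing in full generality is exactly the input that present techniques do not provide, and it is why the statement remains a conjecture; the verification for sheaves supported on low-degree rational curves is precisely the range in which the correspondence is available without appealing to wall-crossing.
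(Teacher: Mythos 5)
The statement you set out to prove is posed in the paper as Conjecture \ref{mainconj}; the paper itself contains no proof of it, and only verifies the special case of objects $i^*\cF$ with $\cF$ supported on lines or conics (Theorem \ref{maint}), by explicit geometry: Proposition \ref{conic} places every conic in $\rS_2(X)$ using nodal GM threefolds, Abel--Jacobi arguments and period partners, and the line case is handled by degenerating residual curves on the surfaces $S_{V_2}$. Your text is likewise not a proof, and you say so yourself: your final paragraph concedes that the compatibility of the filtration $\rS_\bullet(X)$ with wall-crossing is ``exactly the input that present techniques do not provide.'' That concession is fatal, because the step you defer is not a technical lemma but the entire content of the conjecture: once you have reduced matters to showing that the lattice-theoretic index $Q(a,b)=d(\cE)$ bounds the geometric Beauville--Voisin level of $p(\cE)$ for every class and every object realizing it, you have restated the conjecture in different language rather than proven any part of it.

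Beyond that structural point, several intermediate claims would need justification even as a programme. The formula $\dim \Ext^1(\cE,\cE)=2+\langle v,v\rangle$ via Serre duality holds for simple (e.g.\ stable) objects, but the conjecture quantifies over \emph{all} objects of $\cA_X$; for non-simple or unstable $\cE$ there is no moduli space $M_v$ to invoke, and for non-primitive $v$ the space $M_v$ may be empty or singular, so the reduction ``it suffices to prove $p\bigl(\CH_0(M_v)\bigr)\subseteq \rS_{d(\cE)}(X)$'' does not cover the stated generality. The assertion that $p$ factors through $\CH_0(M_v)$ as a correspondence induced by a universal object is plausible (it is the analogue of the Shen--Yin setup for cubic fourfolds, and of the role the paper expects $\tY^\vee$ to play via \cite{perry2019stability}) but is asserted rather than argued; similarly, the claim that the correction terms arising from the non-additivity of $c_3$ lie ``in a fixed low piece of $\rS_\bullet(X)$'' is given no argument, and is not obviously harmless since the filtration bound $d(\cE)$ you must meet varies with the class. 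By contrast, the cases the paper actually proves avoid all of this machinery: they work with $p(\cE)$ as an explicit $1$-cycle, namely a multiple of a line or conic class, and place it in $\rS_2(X)$ by concrete constructions on $X$ and its period partners, using $d(i^*\cF)\geq 2$ as a purely numerical lower bound rather than as a moduli-theoretic dimension.
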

Let $i^*$ be the left adjoint functor of the natural inclusion $\cA_X\hookrightarrow D^b(X)$. We show that the classes of lines and conics on a very general GM fourfold $X$ are in $\rS_2(X)$, which implies 
our second main result concerning sheaves supported on low-degree rational curves (Theorem \ref{maint}):
\begin{theorem}\label{mainthm}
We assume that $X$ is very general.
If $\cF$ is supported on lines or conics on $X$, conjecture \ref{mainconj} holds for $\cE=i^*\cF$. 
\end{theorem}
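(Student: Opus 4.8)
The plan is to reduce Conjecture \ref{mainconj} for $\cE=i^*\cF$ to three inputs: the identification of the cycle $p(\cE)$, the value of $d(\cE)$, and the containment of line and conic classes in $\rS_2(X)$. First I would compute the projection $i^*\cF$ for $\cF=\cO_\ell$, $\cO_c$ or a twist, by mutating away the components in $\langle\cO_X,\cU_X^\vee,\cO_X(1),\cU_X^\vee(1)\rangle$. Since these exceptional objects are locally free, the mutations change only the lower Chern classes, and a Grothendieck--Riemann--Roch computation on $X$ should give $c_3(i^*\cF)$ equal to an integer multiple of $[\ell]$ (resp.\ $[c]$) modulo a multiple of $c_3(\pr[\cO_p])$. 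As the modification $p$ discards exactly the $\pr[\cO_p]$-term, this yields $p(i^*\cF)\in\ZZ\cdot[\ell]$ or $\ZZ\cdot[c]$; membership in $\rS_2(X)$ is then unaffected by the multiple. This first step is bookkeeping once the mutation triangles are recorded.

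Next I would evaluate $d(\cE)=\tfrac12\dim\Ext^1(\cE,\cE)$. In the K3 category $\cA_X$ the group $\Ext^1(\cE,\cE)$ is the tangent space to the moduli of $\cE$, of dimension $\langle v(\cE),v(\cE)\rangle+2$ for the Mukai pairing. For conics this moduli is an open part of $\tY^\vee$, forcing $\langle v,v\rangle=2$ and hence $d=2$; for lines I would compute the Mukai vector of $i^*\cO_\ell$ directly, and the very general hypothesis should again give square $2$, so $d=2$. Consequently $\rS_{d(\cE)}(X)=\rS_2(X)$ in both cases, and it suffices to place $p(\cE)$ in the top piece $\rS_2(X)$.

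The containment in $\rS_2(X)$ is the geometric heart. For conics it is immediate: since $\rS_2\CH_0(\tY^\vee)=\CH_0(\tY^\vee)$, the piece $\rS_2(X)$ is by definition the image of the Iliev--Manivel conic correspondence and is generated by conic classes, so $[c]\in\rS_2(X)$. For lines I would degenerate. Any smooth conic class already lies in $\rS_2(X)$, and a flat family of conics over $\PP^1$ keeps all fibers rationally equivalent; hence every \emph{degenerate} conic class also lies in $\rS_2(X)$. Realizing the non-reduced double line $2\ell$ as such a limit gives $2[\ell]\in\rS_2(X)$, so $[\ell]\in\rS_2(X)$ after tensoring with $\QQ$. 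Combining the three steps, $p(\cE)$ is a multiple of $[\ell]$ or $[c]$ lying in $\rS_2(X)=\rS_{d(\cE)}(X)$, which is Conjecture \ref{mainconj} for $\cE=i^*\cF$.

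I expect the main obstacle to be the line case of the last step. One must verify that for a very general $X$ the required degenerate conics actually occur in $F(X)$---that the double line $2\ell$, or a coplanar pair $\ell\cup\ell'$ of lines on $X$, lies on $X$ and arises as a flat limit of smooth conics---and that the Iliev--Manivel morphism sends it into $\tY^\vee$ rather than into its indeterminacy locus. The double-line degeneration is the cleanest route to isolating a single line class (albeit only over $\QQ$), whereas coplanar pairs yield only the relations $[\ell]+[\ell']\in\rS_2(X)$; reconciling this isolation with the Mukai-vector computation of $d(i^*\cO_\ell)$ is the delicate point.
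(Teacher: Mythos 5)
Your proposal breaks down precisely at the step you label ``immediate'': the conic case. The piece $\rS_2(X)$ is \emph{not} by definition generated by conic classes. By the paper's definition, $z\in\rS_2(X)$ means some multiple $az$ can be written as $a_1c_1+a_2c_2+a_0\theta$ where the \emph{two} conics $c_1,c_2$ satisfy $\alpha(c_j)\in D$ for a fixed uniruled divisor $D\subset\tY^\vee$, i.e.\ their images lie in the first piece of the Beauville--Voisin filtration on $\CH_0(\tY^\vee)$. A general conic maps to a general point of $\tY^\vee$, which lies on no fixed uniruled divisor, so there is a genuine theorem to prove here; the fact that $\rS_2\CH_0(\tY^\vee)=\CH_0(\tY^\vee)$ as a group does not help, because the filtration on $\CH_1(X)$ is defined by the number of conics over $D$, not as the image of $\rS_2\CH_0(\tY^\vee)$. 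In the paper this is Proposition \ref{conic}, and it is the technical heart of the whole argument: one finds a nodal GM threefold $X_B$ with $8$ nodes containing the given conic inside a \emph{period partner} $X'$ of $X$ (this is where Corollary \ref{partner} is needed), shows ${\rm Alb}(F(X_B))$ is only $2$-dimensional via the Prym construction of Logachev--Welters, and combines the surjection $R'^{(2)}\twoheadrightarrow {\rm Alb}(\tilde{S})$ with Voisin's theorem on Abel--Jacobi maps to produce the required expression $c=c_1+c_2-\theta$ with $c_1,c_2$ over the uniruled divisor. None of this appears in your proposal, and no soft argument can replace it.

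The line case also fails as written, for the reason you yourself flag as the ``delicate point'': the double-line degeneration simply is not available for a general line. By \cite[Lemma 3.8]{iliev2011fano}, cited in the paper, the double lines form only a \emph{surface} in $F(X)$, while $F_1(X)$ is a threefold; so a general line $\ell$ carries no double-conic structure inside $X$ and $2\ell$ is not a flat limit of conics at all, whereas the coplanar pairs $\ell\cup\iota_1(\ell)$ you mention only give the relation $[\ell]+[\iota_1(\ell)]=\sigma$ and do not isolate $[\ell]$. The paper exploits the double-line surface quite differently: since $X$ is very general, any two surfaces in $\tY^\vee$ meet (Proposition \ref{o}), so the double-line surface meets the constant cycle surface $Z$, producing one special line $l_0$ with $2l_0=\theta$; a general line $l$ is then treated by an explicit residual-curve computation on the surfaces $S_{V_2}$ and $S_{V_4}$, yielding $l=c-l_0$ in $\CH_1(X)$ for some conic $c$ and reducing to Proposition \ref{conic}. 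Finally, a smaller gap: the theorem allows arbitrary sheaves $\cF$ supported on lines or conics, whose numerical class is $re[\cO_l(1)]+m[\cO_p]$ with arbitrary $r>0$ and $m$; your exact computation $d(\cE)=2$ via moduli of stable objects applies only to very particular $\cF$ (and $i^*\cF$ need not be stable). What is actually needed, since $\rS_\bullet$ is increasing, is only the lower bound $d(i^*\cF)\geq 2$, which the paper gets for all such $\cF$ from the negative-definite Euler pairing on $K_{\rm num}(\cA_X)$ of a very general $X$, giving $d(i^*\cF)\geq 5m^2+4mre+r^2e^2+1\geq 2$ for $e=1,2$.
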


Our approach to deducing Theorem \ref{mainthm1} has another application, concerning with the relation between $\CH_0(\tY^\vee)$ and $\CH_1(X)$. When $\tY^\vee$ is birational to $S^{[2]}$ for some K3 surface $S$,  it has been shown in \cite{laterveer2021zero} that $\CH_0(\tY^\vee)$ with $\QQ$-coefficients has a natural decomposition for the Chow group. For 0-cycles, the decomposition is
$$\CH_0(\tY^\vee)=\QQ\cdot o\oplus\CH_0(S)_{\rm hom}\oplus\CH_0(\tY^\vee)^+_{\rm hom},$$
where $\CH_0(\tY^\vee)^+_{\rm hom}$ is the $\iota$-invariant part of $\CH_0(\tY^\vee)_{\rm hom}$. We show that in the general case, the group $\CH_1(X)$ would play the role of the group $\CH_0(S)$.
\begin{theorem}(Theorem \ref{chowiso})
When the dual double EPW sextic $\tY^\vee$ is general, we have the following isomorphism of groups:
\begin{equation*}
    \ZZ\cdot o\oplus \CH_0(\tY^\vee)^{-}_{\rm hom} \cong \CH_1(X),
\end{equation*}
where $\CH_0(\tY^\vee)^{-}_{\rm hom}$ is the $\iota$-anti-invariant part of $\CH_0(\tY^\vee)_{\rm hom}$.
\end{theorem}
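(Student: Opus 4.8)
The plan is to realize both sides through the incidence correspondence attached to Iliev and Manivel's $\PP^1$-fibration $\phi:F(X)\to\tY^\vee$, where $F(X)$ is the Hilbert scheme of conics. For general $y\in\tY^\vee$ the fiber $\phi^{-1}(y)\cong\PP^1$ is a pencil of conics on $X$, all of which are rationally equivalent as $1$-cycles; writing $c_y$ for a representative, I obtain a homomorphism
\[
\gamma:\CH_0(\tY^\vee)\to\CH_1(X),\qquad [y]\mapsto[c_y],
\]
extended over the divisorial locus of degenerate fibers by spreading out. Setting $o_X:=\gamma(o)$, which is a single well-defined class because by Theorem \ref{mainthm1} the point $o$ is represented by any point of the constant cycle surface $Z$, I propose to show that
\[
\Phi:\ZZ\cdot o\oplus\CH_0(\tY^\vee)^-_{\rm hom}\to\CH_1(X),\qquad n\cdot o+\alpha\mapsto n\cdot o_X+\gamma(\alpha),
\]
is an isomorphism.

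The first step is the geometric identity $[c_y]+[c_{\iota(y)}]=2\,o_X$ in $\CH_1(X)$ for general $y$, whose specialization to the fixed locus $Z=\mathrm{Fix}(\iota)$ is forced by Theorem \ref{mainthm1}. This is where the EPW geometry must enter: using the Lagrangian data defining $Y^\vee$ and the fact that $y,\iota(y)$ lie over a common point of the sextic, I would exhibit an explicit rational family on $X$ linking $c_y+c_{\iota(y)}$ to a conic over $Z$. Granting the identity, for $\alpha\in\CH_0(\tY^\vee)_{\rm hom}$ one gets $\gamma\circ\iota_*=-\gamma$, so $\gamma$ annihilates the $\iota$-invariant part and factors through $\CH_0(\tY^\vee)^-_{\rm hom}$; since $o_X$ has degree $2$ while $\gamma(\alpha)$ is homologically trivial, taking degrees of $1$-cycles shows $\Phi$ is injective on the $\ZZ\cdot o$ summand and reduces injectivity of $\Phi$ to injectivity of $\gamma$ on $\CH_0(\tY^\vee)^-_{\rm hom}$.

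For the reverse direction I would define $\delta:\CH_1(X)\to\CH_0(\tY^\vee)^-_{\rm hom}$ on conic classes by $[c]\mapsto[\phi([c])]-[\iota\,\phi([c])]$, which is manifestly homologically trivial and anti-invariant. The identity $\phi([c_y])=y$ then gives, purely formally, $\delta\circ\gamma=2\cdot\mathrm{id}$ on $\CH_0(\tY^\vee)^-_{\rm hom}$, so $\gamma$ is injective up to $2$-torsion once $\delta$ is known to be a genuine homomorphism. Surjectivity of $\Phi$ I would obtain from the claim that $\CH_1(X)$ is generated by conic classes: conics sweep out $X$, and a Bertini argument along the fibers of the universal conic $q:\mathcal{C}\to X$ writes an arbitrary $1$-cycle in terms of the $[c_y]$. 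Each conic satisfies $[c]=\gamma([\phi([c])])=o_X+\gamma(\beta)$ with $\beta=[\phi([c])]-o\in\CH_0(\tY^\vee)_{\rm hom}$, and since $\gamma$ kills the invariant part, $[c]$ lies in the image of $\Phi$ as soon as the anti-invariant component of $\beta$ is integral.

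The main obstacle is thus twofold: (i) showing that $\delta$ descends through rational equivalence of $1$-cycles on $X$, and (ii) the integral $2$-divisibility bookkeeping, i.e. that in the general case the eigenspace splitting $\CH_0(\tY^\vee)_{\rm hom}=\CH_0^+\oplus\CH_0^-$ holds integrally. Point (i) does not follow formally, since a rational equivalence between conics need not be realized inside $F(X)$; I expect to resolve it exactly as in the proof of Theorem \ref{mainthm1}, by spreading the equivalence out over a curve, pushing it to $\tY^\vee$, and using the constant cycle surface $Z$ to absorb the residual ambiguity. Point (ii) is where the generality of $\tY^\vee$ is essential: the fixed locus $Z$, being a constant cycle surface, pins the invariant part down to exactly $\ZZ\cdot o$ and rules out the spurious $2$-torsion, upgrading the relation $\delta\circ\gamma=2\cdot\mathrm{id}$ to the asserted integral isomorphism.
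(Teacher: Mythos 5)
There is a genuine error in your surjectivity step, and it cannot be repaired for the specific map you wrote down. Every class in the image of your candidate isomorphism $n\cdot o+\alpha\mapsto n\cdot o_X+\gamma(\alpha)$ has \emph{even} degree in $H_2(X,\ZZ)\cong\ZZ$: the class $o_X=\gamma(o)$ is the class of a conic (degree $2$), and $\gamma(\alpha)$ is homologically trivial. But $\CH_1(X)$ contains classes of lines, which have degree $1$. Integrally, $\CH_1(X)$ is generated by lines and is \emph{not} generated by conics (this parity obstruction is exactly why Proposition \ref{ch} asserts conic-generation only with $\QQ$-coefficients); so your supporting claim ``$\CH_1(X)$ is generated by conic classes'' via a Bertini argument is false, and your map is injective with image of index $2$, never surjective. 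The theorem is an \emph{abstract} group isomorphism: the paper proves (Proposition \ref{decomposition}) that $\Psi$ kills $\CH_0(\tY^\vee)^{+}_{\rm hom}$ and restricts to an isomorphism $\CH_0(\tY^\vee)^{-}_{\rm hom}\cong\CH_1(X)_{\rm hom}$, and then splits $\CH_1(X)=\ZZ\cdot[l]\oplus\CH_1(X)_{\rm hom}$ by the degree map using a \emph{line} class $[l]$; the summand $\ZZ\cdot o$ is matched with $\ZZ\cdot[l]$ abstractly, not through $\gamma$. (Consistently, the paper later shows $\theta=o_X=2[l_0]$ for a suitable line $l_0$, so $\ZZ\cdot o_X+\CH_1(X)_{\rm hom}$ really is a proper subgroup.)

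On injectivity, you correctly identify that everything hinges on your $\delta([c])=[\alpha(c)]-[\iota\alpha(c)]$ descending to rational equivalence of $1$-cycles, but your proposed fix (spread a rational equivalence over a curve and push it to $\tY^\vee$) does not work: a rational equivalence between conics on the fourfold $X$ passes through $1$-cycles that are not sums of conics, so there is nothing to push forward along $\alpha$. The paper's mechanism is different, and it is the real content of the proof: one transports $[c]$ to the incidence divisor $D_c=\Phi([c])=p_*q^*[c]$ on $F(X)$, which is well defined on $\CH_1(X)$ because $q$ is flat (Lemma \ref{L2.3}); then the Del Pezzo self-intersection computation (Lemma \ref{KeyLemma}) together with the constancy of $D_0:=D_c+D_{\iota(c)}$ (Lemma \ref{2.1}) gives, for an ample $G\subset F$ meeting a general fiber in $m$ points and $t=\alpha(c)$,
\[
G\cdot D_0\cdot\bigl(D_{c}-D_{\iota(c)}\bigr)=G\cdot\bigl(D_c^2-D_{\iota(c)}^2\bigr)=4m\bigl(\iota(t)-t\bigr)\in\CH_0(\tY^\vee),
\]
i.e.\ a left inverse of $\Psi$ on the anti-invariant part \emph{up to the integer $4m$}; torsion-freeness of $\CH_0(\tY^\vee)$ (Roitman's theorem) then cancels this factor. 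This last point also addresses the other gap you flag: what removes the spurious torsion in a relation like $\delta\circ\gamma=2\,\mathrm{id}$, and what makes the eigenspace decomposition of $\CH_0(\tY^\vee)_{\rm hom}$ work integrally, is not the constant cycle surface $Z$ but Roitman's theorem combined with divisibility of $\CH_0(\tY^\vee)_{\rm hom}$.
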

In particular, the group $\CH_1(X)$ is torsion-free and we see that if $X'$ is another GM fourfold with the same dual double EPW sextic as $X$, then their groups of 1-cycles are isomorphic.

Our results rely on Iliev and Manivel’s construction and hence all dual double EPW sextics are assumed to be general in the rest of the paper.

\section{Notation}
The following notations appear frequently in the paper, so we gather them together for the sake of convenience.
\begin{itemize}
    \item $V_m$ is an $m$-dimensional vector space, $X$ is a smooth ordinary GM fourfold and $\tY^\vee$ is the dual double sextic associated with $X$.
    \item $\iota$ is the involution of $\tY^\vee$ and $Z\subset \tY^\vee$ is the invariant locus of $\iota$. 
    \item $c$ is a conic on $X$, $F(X)$ is the Hilbert scheme of conics on $X$ and $\alpha: F(X)\rightarrow \tY^\vee$ is the birational $\PP^1$-fibration. $\alpha$ induces an isomorphism  $\alpha_* : \CH_0(F(X))\cong \CH_0(\tY^\vee).$
    \item $P$ is the universal conic, $p,q$ are the projections to $F(X)$ and $X$. $\Phi=p_*q^*: \CH^*(X)\rightarrow \CH^*(F)$. $\Psi$ is the composition of $q_*p^*$ and $\alpha_*^{-1}$: $$\Psi=q_*p^*\alpha_*^{-1}: \CH_0(\tY^\vee)\rightarrow \CH_1(X),$$ sending $\alpha(c)$ to its class $[c]\in \CH_1(X)$.
    \item $S_{V_4}=\Gr(2,V_4)\cap X$ for some $V_4\subset V_5$, which is a degree $4$ Del Pezzo surface.
    \item $D_c$ is the subvariety of $F(X)$ parameterizing conics intersecting the given conic $c$. $I\subset F(X)\times F(X)$ is the subvariety of $ F(X)\times F(X)$, parameterizing two intersecting conics.
\end{itemize}
   
\section{Preliminaries}
Let $X=\Gr(2, V_5)\cap H\cap Q$ be a very general GM fourfold and $\tY^\vee\rightarrow Y^\vee$ be its associated double cover of the dual EPW sextic. Denote by $\iota:\tY^\vee\rightarrow \tY^\vee$ the associated involution and by $Z$ the invariant locus of $\iota$. We recall some facts about GM varieties and EPW sextics in this section. Throughout the paper, $V_m$ is an $m$-dimensional vector space. 

\subsection{EPW sextics}
Debarre and Kuznetsov gave a way to associate a GM variety with an EPW sextic in \cite{debarre2018gushel} via linear algebra. We will follow \cite{debarre2020survey} to recall some basic facts about EPW sextics. A GM variety $X_n=\Gr(2,V_5)\cap \PP(W)\cap Q$ of dimension $n$ is a smooth intersection of a linear subspace of dimension $n+4$, a quadratic hypersurface and $\Gr(2,V_5)$ inside $\PP(\bigwedge^2V_5)$.
It has an associated GM data $(W_{n+5}, V_6, V_5, q)$, where 
\begin{itemize}
    \item $V_6$ is a $6$-dimensional vector space, consisting of quadrics containing $X$.
    \item $V_5$ is a hyperplane section of $V_6$, consisting of quadrics containing $\Gr(2,V_5)$.
    \item $W_{n+5}\subset \bigwedge^2V_5$ is a subspace of dimension $n+5$, corresponding to the linear subspace in the definition of GM variety.
    \item $q:V_6\rightarrow \mathrm{Sym}^2W_{n+5}^{\vee}$, is a linear map such that (here we choose an isomorphism $\bigwedge^5V_5\cong \mathbb{C}$)
    $$\forall v\in V_5, q(v)(w,w)=v\wedge w\wedge w.$$
    
\end{itemize}
Then $X=\bigcap_{v\in V_6}Q(v)\subset \PP(W_{n+5})$, where $Q(v)$ is defined by $q(v)$. For a general GM data, $X$ is a smooth GM variety.

On the other hand, $\bigwedge^3V_6$ can be endowed with a symplectic form by wedge product.
Debarre and Kuznetsov defined the Lagrangian data set $(V_6,V_5,A)$ with $A\subset \bigwedge^3V_6$ a Lagrangian subspace and $V_5\subset V_6$ a hyperplane. It gives an EPW data $$Y_A^{\geq l}:=\{[v]\in \PP(V_6)~|~\mathrm{dim}(A\cap (v\wedge\bigwedge ^2V_6))\geq l\}$$ and an extra $V_5$. Here, $Y_A^{\geq 1}$ is a sextic in $\PP(V_6)$ and $Y_A^{\geq 2}$ is a surface. 
In \cite{o2013double}, O'Grady showed that

\begin{theorem}
If $A$ contains no decomposable vectors and $Y_A^{\geq 3}=\emptyset$, there is a double cover $\tY_{A}\rightarrow Y_A^{\geq 1}$ branched along the surface $Y_A^{\geq 2}$. $\tY_A$ is a smooth IHS variety, called the double EPW sextic.
\end{theorem}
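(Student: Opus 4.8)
The plan is to realize $Y_A^{\geq 1}$ as a symmetric degeneracy locus and to produce the double cover as the relative spectrum of a sheaf of algebras built from the cokernel of the degeneracy map. First I would set up, over $\PP(V_6)$, the two rank-$10$ Lagrangian subbundles of the trivial symplectic bundle $\bigwedge^3 V_6 \otimes \cO$: the constant one $A \otimes \cO$, and the tautological one $\mathcal{T}$ whose fibre at $[v]$ is $T_v := v \wedge \bigwedge^2 V_6$. A dimension count shows $\dim T_v = 10$ for every nonzero $v$, so $\mathcal{T}$ is genuinely a subbundle; both $A$ and each $T_v$ are isotropic of half-dimension, hence Lagrangian. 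Composing $\mathcal{T} \hookrightarrow \bigwedge^3 V_6 \otimes \cO$ with the quotient onto $Q := (\bigwedge^3 V_6 \otimes \cO)/(A \otimes \cO)$ gives a map $\lambda : \mathcal{T} \to Q$ of rank-$10$ bundles with $\dim(A \cap T_v) = \mathrm{corank}_{[v]} \lambda$. Thus $Y_A^{\geq l}$ is exactly the locus where $\mathrm{corank}\,\lambda \geq l$; in particular $\det \lambda$ is a section of a line bundle of degree $6$ cutting out the sextic $Y_A^{\geq 1}$.

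The decisive structural input is the symplectic self-duality. Because $A$ and each $T_v$ are Lagrangian, the symplectic form induces canonical isomorphisms $\mathcal{T} \cong Q^\vee$ and $Q \cong \mathcal{T}^\vee$ (up to a fixed line-bundle twist) under which $\lambda$ becomes self-adjoint, $\lambda \cong \lambda^\vee$. This symmetry is what forces $\det \lambda$ to admit a ``square root'' along its zero locus and is the source of an honest double cover rather than merely a degree-$2$ branched cover imposed by fiat. Next I would analyse the cokernel $\cF := \mathrm{coker}(\lambda)$, a coherent sheaf scheme-theoretically supported on $Y_A^{\geq 1}$. Using the two standing hypotheses I would pin down the local geometry: that $A$ contains no decomposable vectors guarantees $Y_A^{\geq 2}$ is smooth of dimension $2$ and controls the local normal form of $\lambda$ along it, while $Y_A^{\geq 3} = \emptyset$ removes the deepest degeneracy stratum. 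On $Y_A^{\geq 1} \setminus Y_A^{\geq 2}$ the corank is exactly $1$, so $\cF$ is a line bundle there; along $Y_A^{\geq 2}$ the corank jumps to $2$ and $\cF$ acquires the local model dictated by the symmetric degeneracy.

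With this in hand, the self-adjointness of $\lambda$ yields a symmetric multiplication $\cF \otimes \cF \to \cF$ after the appropriate $\cO(-3)$ twist, equivalently a commutative $\cO_{Y_A^{\geq 1}}$-algebra structure on $\cO_{Y_A^{\geq 1}} \oplus \cF'$ for a suitably normalised $\cF'$. I would then set $\tY_A := \underline{\mathrm{Spec}}$ of this algebra; by construction the projection $\tY_A \to Y_A^{\geq 1}$ is finite of degree $2$, étale exactly where $\cF'$ is invertible, i.e. over $Y_A^{\geq 1} \setminus Y_A^{\geq 2}$, and ramified along the preimage of $Y_A^{\geq 2}$, which gives the asserted branch surface. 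Smoothness of $\tY_A$ is then a local computation in the normal form from the previous step: the no-decomposable-vectors hypothesis ensures the local equation of the cover is a nondegenerate quadratic relation, so the total space is smooth even over the branch locus. Finally, to identify $\tY_A$ as an IHS fourfold I would exhibit a nowhere-degenerate holomorphic $2$-form, arising from the symplectic form on $\bigwedge^3 V_6$ via the self-duality of $\lambda$, and check simple connectedness; the cleanest route to the full statement (namely $K3^{[2]}$-deformation type) is to connect $A$ by a path of admissible Lagrangians to one for which $\tY_A$ is visibly birational to $S^{[2]}$ for a $K3$ surface $S$, and to invoke deformation invariance of the IHS property.

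The step I expect to be the main obstacle is the local analysis along $Y_A^{\geq 2}$: one must show that the symmetric degeneracy map $\lambda$, under the no-decomposable-vectors hypothesis, has exactly the controlled corank-$2$ normal form making $\cF$ flat of the right length, $Y_A^{\geq 2}$ smooth, and the algebra $\cO \oplus \cF'$ smooth. Everything downstream — the branch locus, smoothness of $\tY_A$, and even the nondegeneracy of the holomorphic $2$-form at ramification points — rests on pinning down this normal form.
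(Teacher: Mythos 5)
This theorem is not proved in the paper at all --- it is O'Grady's result, quoted from \cite{o2013double} as background --- so the only meaningful comparison is with O'Grady's own argument, and your proposal reconstructs that argument essentially step for step: the identification of $Y_A^{\geq l}$ with the corank-$\geq l$ loci of the map $\lambda\colon \mathcal{T}\to (\bigwedge^3V_6\otimes\cO)/(A\otimes\cO)$ between Lagrangian subbundles, the symmetric (self-adjoint) structure of $\lambda$ coming from the symplectic form, the algebra structure on $\cO_{Y_A^{\geq 1}}\oplus\cF'$ built from the twisted cokernel with $\tY_A$ defined as its relative Spec, smoothness via the corank-$2$ local normal form under the two hypotheses (no decomposable vectors in $A$ and $Y_A^{\geq 3}=\emptyset$), and finally deformation of $A$ through admissible Lagrangians to one associated with a genus-$6$ K3 surface $S$, for which the double cover is $S^{[2]}$, to obtain the $K3^{[2]}$ deformation type. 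Two small corrections to your sketch: first, the holomorphic $2$-form is not obtained directly from the wedge-product symplectic form on $\bigwedge^3V_6$ via the self-duality of $\lambda$ --- no such direct construction is available; the symplectic form, simple connectedness, and the IHS property all follow in one stroke from the deformation equivalence with $S^{[2]}$, which is exactly the fallback route you name, so nothing is lost. Second, smoothness of $Y_A^{\geq 2}$ requires both hypotheses, not only the absence of decomposable vectors ($Y_A^{\geq 2}$ is singular precisely along $Y_A^{\geq 3}$), though you do use the two conditions jointly where it matters. Your diagnosis that the corank-$2$ local analysis is the crux is accurate: that is precisely where the hypotheses on $A$ enter in O'Grady's proof.
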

Iliev and Manivel described in \cite{iliev2011fano} another way to construct the EPW sextic associated with a GM variety $X$. Define $\widetilde{\mathrm{Disc}}(X)$ to be the subscheme of $\PP(V_6)$ of singular quadrics containing $X$. It is a hypersurface of degree $n+5$ and the multiplicity of the hyperplane $\PP(V_5)$ of Pl\"ucker quadrics is at least $n-1$. Then $$\mathrm{Disc}(X):= \widetilde{\mathrm{Disc}}(X)-(n-1)\PP(V_5)$$ equals the EPW sextic $Y_A$.

One can use the same construction for the  Lagrangian subspace  $A^{\vee}\subset \bigwedge^3V_6^\vee$, the corresponding $\tY_{A^{\vee}}$ is called the dual double EPW sextic. We mainly deal with the dual double EPW sextic associated with a GM variety and abbreviate it as $\tY^\vee$ in the following sections.

There is a bijection between the GM data sets and the Lagrangian data sets by  \cite[Theorem 2.2]{debarre2020survey}. 
However, the GM  fourfolds associated with a fixed dual EPW sextic are not unique. Debarre and Kuznetsov proved the following \cite{debarre2020gushel}\cite[Theorem 2.6]{debarre2020survey}:
\begin{theorem}\label{moduli}
Let $M^{GM}$ and $M^{EPW}$ be the coarse moduli spaces of ordinary GM $n$-folds and EPW sextics. Then there exists a surjective morphism:
\begin{equation*}
    \pi : M^{GM} \longrightarrow M^{EPW}
\end{equation*}
with fibre $\pi^{-1}(A^{\vee})=Y_{A}^{5-n}$. 
\end{theorem}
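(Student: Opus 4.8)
The plan is to realize $\pi$ as the map that forgets the extra hyperplane datum in the Debarre--Kuznetsov dictionary, and then to identify its fibres with EPW strata by a dimension count, essentially reconstructing the argument of \cite{debarre2020gushel}. First I would invoke the bijection between GM data sets $(W_{n+5}, V_6, V_5, q)$ and Lagrangian data sets $(V_6, V_5, A)$ recalled from \cite[Theorem 2.2]{debarre2020survey}: to a smooth GM $n$-fold $X$ it attaches a triple $(V_6, V_5, A)$ with $A\subset \bigwedge^3 V_6$ Lagrangian, and passing to the dual Lagrangian gives the isomorphism class $[A^\vee]\in M^{EPW}$ of the dual EPW sextic $\tY_{A^\vee}\to Y_{A^\vee}$. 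To upgrade this set-theoretic assignment $X\mapsto [A^\vee]$ to a morphism of coarse moduli spaces I would check that the construction is $\mathrm{PGL}(V_6)$-equivariant and independent of the auxiliary choices (the isomorphism $\bigwedge^5 V_5\cong \mathbb{C}$ and a splitting $V_6 = V_5\oplus \langle v\rangle$), so that isomorphic GM $n$-folds produce projectively equivalent Lagrangians; functoriality of the construction in families then descends to the claimed morphism $\pi$.

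For surjectivity I would run the correspondence backwards. Given a Lagrangian $A$ containing no decomposable vectors and with $Y_A^{\geq 3}=\emptyset$, the conditions ensuring a smooth double cover, I must produce a hyperplane $V_5\subset V_6$ whose associated GM data yields a smooth GM $n$-fold of the correct dimension. The required $V_5$ is precisely a point of the relevant EPW stratum, which is nonempty for general such $A$; unwinding the definition of the quadratic map $q$, with $q(v)(w,w)=v\wedge w\wedge w$, then shows that the resulting intersection $\Gr(2,V_5)\cap \PP(W_{n+5})\cap Q$ has the expected dimension $n$ and is smooth.

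The heart of the matter is the fibre description. Fixing $[A^\vee]\in M^{EPW}$, the fibre $\pi^{-1}(A^\vee)$ consists of all smooth GM $n$-folds whose Lagrangian is $A$; by the dictionary these are classified by the choice of the hyperplane datum, i.e.\ by a point $[v]$ in the appropriate projectivization. The key computation relates $\dim X$ to the jump of the EPW filtration at $[v]$: writing $V_6 = V_5\oplus\langle v\rangle$, one has $v\wedge\bigwedge^2 V_6 = v\wedge\bigwedge^2 V_5\cong\bigwedge^2 V_5$, and the space $W_{n+5}$ recovering $X$ is obtained as the annihilator inside $\bigwedge^2 V_5$ of the intersection $A\cap(v\wedge\bigwedge^2 V_6)$. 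Since $\bigwedge^2 V_5$ is $10$-dimensional, this forces the complementarity
\[
\dim W_{n+5} + \dim\bigl(A\cap(v\wedge\textstyle\bigwedge^2 V_6)\bigr) = 10,
\]
so that $\dim W_{n+5}=n+5$ is equivalent to $\dim(A\cap(v\wedge\bigwedge^2 V_6)) = 5-n$, i.e.\ to $[v]\in Y_A^{5-n}$. Verifying that every point of the stratum $Y_A^{5-n}$ genuinely yields a smooth GM $n$-fold, rather than a singular or lower-dimensional degeneration, then identifies the fibre with $Y_A^{5-n}$.

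The main obstacle I anticipate is this last dimension bookkeeping: one must pin down exactly how $W_{n+5}$ is extracted from the pair $(A,V_5)$ and confirm that the numerical coincidence $\dim(A\cap(v\wedge\bigwedge^2 V_6))=5-n$ is equivalent to $\dim X = n$ together with smoothness, uniformly across all strata, keeping careful track of whether the relevant stratification is that of $A$ or of the dual $A^\vee$ under the identification of hyperplanes $V_5\subset V_6$ with points of the dual projective space. A secondary technical point is the passage from the set-theoretic bijection to an honest morphism of coarse moduli spaces: this requires controlling automorphisms and the stack structure so that $\pi$ is well defined and its fibres are reduced of the asserted dimension.
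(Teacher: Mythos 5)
Your overall strategy---realize $\pi$ by forgetting the hyperplane datum in the Debarre--Kuznetsov dictionary, check $\mathrm{PGL}(V_6)$-equivariance to descend to coarse moduli spaces, and identify the fibre with an EPW stratum---is exactly the route of \cite{debarre2020gushel}, which is what the paper cites (it gives no proof of its own). However, your key ``complementarity'' computation is wrong, and it fails at precisely the point you flagged as delicate: which of the two dual stratifications is the relevant one. In the dictionary, a GM $n$-fold with Lagrangian $A\subset\bigwedge^3V_6$ is determined by the hyperplane $V_5\subset V_6$, i.e.\ by a point of $\PP(V_6^{\vee})$, \emph{not} by a point $[v]\in\PP(V_6)$: the splitting $V_6=V_5\oplus\langle v\rangle$ involves an arbitrary choice of $v\notin V_5$, so the quantity $\dim\bigl(A\cap(v\wedge\bigwedge^2V_6)\bigr)$ you use is not a function of the fibre datum at all. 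Worse, for a generic choice of $v\notin V_5$ the point $[v]$ lies off the sextic $Y_A$, so this dimension is $0$ and your formula would force $\dim W_{n+5}=10$, i.e.\ $n=5$, for every GM variety; different choices of $v$ for the same $X$ would even give different values of $n$.

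The correct bookkeeping is as follows. One has the Lagrangian direct sum decomposition $\bigwedge^3V_6=\bigwedge^3V_5\oplus\bigl(v\wedge\bigwedge^2V_5\bigr)$, and $W_{n+5}$ is the annihilator of $A\cap\bigwedge^3V_5$ under the perfect wedge pairing $\bigwedge^2V_5\times\bigwedge^3V_5\to\bigwedge^5V_5\cong\mathbb{C}$ (equivalently, the image of $A$ under projection along $\bigwedge^3V_5$). Hence $\dim W_{n+5}=10-\dim\bigl(A\cap\bigwedge^3V_5\bigr)$, so $\dim X=n$ iff $\dim\bigl(A\cap\bigwedge^3V_5\bigr)=5-n$. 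One then applies the duality $\dim\bigl(A\cap\bigwedge^3V_5\bigr)=\dim\bigl(A^{\vee}\cap(v^*\wedge\bigwedge^2V_6^{\vee})\bigr)$, where $V_5=\ker v^*$, to conclude that GM $n$-folds with Lagrangian $A$ are classified by the stratum $Y_{A^{\vee}}^{5-n}\subset\PP(V_6^{\vee})$ of the \emph{dual} Lagrangian; the formula $\pi^{-1}(A^{\vee})=Y_A^{5-n}\subset\PP(V_6)$ in the statement is this assertion with the roles of $A$ and $A^{\vee}$ exchanged (the moduli point being labeled by the dual). As written, your argument reaches the stated formula only through the incorrect identification; carried out correctly under your own convention (fibre $=$ GM $n$-folds with Lagrangian $A$) it yields $Y_{A^{\vee}}^{5-n}$, and reconciling this with the statement requires exactly the $A\leftrightarrow A^{\vee}$ swap you left unresolved. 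A secondary gap: for surjectivity you need the relevant stratum to be nonempty and the resulting GM variety to be smooth for \emph{every} point of $M^{EPW}$ (every class of Lagrangians without decomposable vectors), not just for general ones; this is where the Debarre--Kuznetsov smoothness criterion in terms of decomposable vectors must be invoked, rather than a genericity argument.
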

If two GM fourfolds are in the same fiber of $\pi$, we say that they are {\it period partners}.

\subsection{Lines and Conics on \texorpdfstring{$X$}{X}}
In this subsection, we gather some facts about lines and conics on a general GM fourfold $X$ following \cite[Section 3]{iliev2011fano} and \cite{debarre2012period}.

Let $F(X)$ be the Hilbert scheme of conics and $F_1(X)$ the Hilbert scheme of lines on $X$. $F(X)$ is a smooth fivefold.
For a conic $c\subset X$, we denote by $\langle c\rangle$ the plane spanned by $c$.  For $V_4\subset V_5$, we set $$S_{V_4}= \Gr(2,V_4)\cap H\cap Q \subset \PP(\bigwedge^2V_4)\cap H\cong \PP^4,$$ which is a degree $4$ Del Pezzo surface.

The hyperplane $H$ can be viewed as a two-form $\omega$ on $V_5$ with a one-dimensional kernel $W_1$. Let $V_i\subset V_5$ be a subspace of dimension $i$. There are three types of conics on $X$:
\begin{enumerate}
    \item a {\bf $\tau$-conic}: The plane $\langle c\rangle$ spanned by the conic $c$ is not contained in $\Gr(2,V_5)$. The normal bundle of a smooth  $\tau$-conic $c$ is $N_{c/X}=\cO_c\oplus\cO_c(1)\oplus\cO_c(1)$. A general conic is a $\tau$-conic.
    \item a {\bf $\rho$-conic}: The plane $\langle c\rangle$ is of the form $\PP(V_3^{\vee})$ and contained in $H$, or equivalently, $W_1\subset V_3$ and $V_3$ is isotropic for $\omega$. 
    For a general $X$, the normal bundle of a smooth  $\rho$-conic $c$ is $N_{c/X}=\cO_c\oplus\cO_c(1)\oplus\cO_c(1)$. They form a $3$-dimensional subvariety in $F(X)$. 
    \item a {\bf $\sigma$-conic}: The plane $\langle c\rangle$ is of the form $\PP(V_1\bigwedge V_4)$ and contained in $H$, or equivalently, $V_4\subset V_1^{\perp_\omega}$. For $V_1\neq W_1$, dim $V_1^{\perp_\omega}=4$. They are parameterized by $Bl_{[W_1]}(\PP(V_5))$ and form a $4$-dimensional subvariety in $F(X)$.
\end{enumerate}
We see that all the $\rho$-conics (resp. $\sigma$-conics) are represented by the same class in $\CH_1(X)$ and denote it by $\rho$ (resp. $\sigma$) for abuse of notations.

Lines on $X$ are related to the double EPW sextic $\tY_A$.
The general $\sigma$-conics which are unions of two lines are parameterized by $$[V_1]\in Y_{A}^{\geq 1} \cap \PP(V_5)$$ and a general line is one of the components of a degenerate $\sigma$-conic by \cite[Theorem 4.7]{debarre2012period}. More precisely, the Hilbert scheme of lines $F_1(X)$ is the blow-up of $\tY_{A}\times _{\PP(V_6)}\PP(V_5)$ at a point.
There is a rational involution  $$\iota_1: F_1(X)\dashrightarrow F_1(X),$$  with $l\cup \iota_1(l)$ a $\sigma$-conic.

The Fano variety of conics $F(X)$ admits birationally a $\PP^1$-fibration to $\tY^\vee$ in the following way. A conic $c\subset X$  spans a degree $2$ surface in $\PP(V_5)$, which is degenerate and hence contained in some $\PP(V_4)$. Therefore, $c\subset  S_{V_4}$. One may check that if $c$ is not of $\rho$-type, the corresponding $V_4$ is unique. 

The linear system $|c|$ in $S_{V_4}$ is one-dimensional, which defines the birational $\PP^1$-fibration to the dual double EPW sextic, see \cite[Proposition 4.18]{iliev2011fano}: 
 $$\alpha : F(X)\rightarrow \tY^\vee.$$ The fibration maps all $\sigma$ and $\rho$ conics to two points $y_1$ and $y_2=\iota(y_1)$ on $\tY^\vee$ and further to the Pl{\"u}cker hyperplane $[V_5]\in Y\subset \PP(V_6^{\vee})$, by \cite[Lemma 4.12]{iliev2011fano}.
In conclusion, we have the following diagram.
\begin{equation}\label{notation}
    \xymatrix{
  & P \ar[d]^q \ar[r]^p & F(X) \ar[r]^\alpha &\tY^\vee            \\
  & X                       },
\end{equation}
where $P$ is the universal conic. The fiber of $\alpha$ for $y\neq y_1,y_2$ is $\PP^1$ and hence $\alpha$ induces an isomorphism  $$\alpha_* : \CH_0(F(X))\cong \CH_0(\tY^\vee).$$ 
We will frequently use some maps between Chow groups:  Let $\Phi=p_*q^*: \CH^*(X)\rightarrow \CH^*(F)$ and $\Psi$ be the composition of $q_*p^*$ and $\alpha_*^{-1}$: $$\Psi=q_*p^*\alpha_*^{-1}: \CH_0(\tY^\vee)\rightarrow \CH_1(X),$$ sending $\alpha(c)$ to its class $[c]\in \CH_1(X)$.

The involution $\iota$ can be described generically as follows.  For $y\in \tY^\vee$, we take a conic $c$ in the fiber $\alpha^{-1}(y)$ and it is contained in $S_{V_4}\subset \PP^4$. Then taking a hyperplane in $\PP^4$ containing $c$, it intersects with $S_{V_4}$ along another conic $c'$. Then $$\iota(y)=\alpha(c').$$ One can check that for a different choice of $c$ and a hyperplane in $\PP^4$, the corresponding $\alpha(c')$ is the same.  

Finally, we need to describe the Hilbert scheme of conics $F(X_3)$ on a linear section  $X_3$ of $X$.  Let  $B\subset \bigwedge^3V_6$ be the Lagrangian subspace corresponding to $X_3$. Then $$Y^{\geq 2}_{B^{\vee}}\subset Y^{\geq 1}_{A^{\vee}}$$ and $F(X_3)$ is birational to the pull-back of $Y^{\geq 2}_{B^{\vee}}$ in $\tY^\vee$, see \cite[Section 5.1]{iliev2011fano}.

\begin{remark}
In \cite[Section 2.4]{voisin2018triangle}, it was claimed that the locus of  $\sigma$-conics is contracted to a constant cycle surface on $\tY^\vee$. However, it is not true, because, by Iliev and Manivel's construction, the locus of  $\sigma$-conics should be contracted to a point. 
\end{remark}

\subsection{The Beauville-Voisin conjecture}
The Beauville-Voisin conjecture implies that the cycle class map of an IHS variety is injective when restricted to the subring generated by divisors in the Chow ring. 

For a double EPW sextic, the Beauville-Voisin conjecture for zero cycles was proved in \cite{ferretti2012chow} and \cite{laterveer2021zero}. The result is slightly stronger, that is,  the cycle class map restricted to the subring \begin{equation}\label{weak splitting}
    R^*(\tY^\vee):= \langle \CH^1(\tY^\vee), \CH^2(\tY^\vee)^+, c_j(\tY^\vee)\rangle 
\end{equation}
generated by divisors, $\iota$-invariant $2$-cycles and Chern classes is injective for $i=4$. 
In particular, there is a distinguished 0-cycle $o$ on $\tY^\vee$. For a very general double EPW sextic,  $\CH^1(\tY^\vee)$ is generated by $h$, which is the polarization induced by the double cover of the sextic hypersurface $Y\subset \PP^5$.  Let $Z$ be the invariant locus of the involution.
The following relations have been obtained in \cite[Proposition 6.1]{ferretti2012chow}\cite[Lemma 9.3]{ferretti2012chow}:  
\begin{equation}\label{intersection number}
    3Z=15h^2-c_2, Z^2=192, h^4=12, h^2Z=40.
\end{equation}
Moreover, the classes of $Z^2,h^4$ and $h^2Z$ are all proportional to the class $o$.

\section{A constant cycle surface on \texorpdfstring{$\tY^\vee$}{Y}}
This section is devoted to proving Theorem \ref{mainthm1}. In the following parts of the paper, we assume that $X$ is general.

\subsection{Two facts about \texorpdfstring{$\CH_1(X)$}{X}}
We first show that $\CH_1(X)$ is rationally generated by conics on $X$ and those conics represented by points on the invariant surface $Z$ have the same class in $\CH_1(X)$.  
\begin{proposition}\label{ch}
The group $\CH_1(X)_{\QQ}$ is generated by conics and   $\CH_1(X)$  is generated by lines on $X$.
\end{proposition}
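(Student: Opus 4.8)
The plan is to exploit the covering of $X$ by the quartic del Pezzo surfaces $S_{V_4}$, as $[V_4]$ ranges over $\PP(V_5^{\vee})\cong\PP^4$, together with the conic fibration $\alpha\colon F(X)\to\tY^\vee$. The key observation is that on each surface $S_{V_4}$ the restriction of the Plücker polarization $h$ is the anticanonical class, and the conic classes (the pencils $C$ with $C^2=0$ and $-K_{S_{V_4}}\cdot C=2$, such as $H-E_i$ together with $2H-E_i-E_j-E_k-E_l$) generate $\operatorname{Pic}(S_{V_4})_{\QQ}$, while the sixteen lines on $S_{V_4}$ (which are lines on $X$) generate $\operatorname{Pic}(S_{V_4})$ over $\ZZ$. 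Since every curve contained in some $S_{V_4}$ therefore has class a $\QQ$-combination of conics, respectively an integral combination of lines, it suffices to reduce an arbitrary $1$-cycle on $X$ to cycles supported on the surfaces $S_{V_4}$.

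For the rational statement I would argue as follows. Let $\mathcal{S}=\{(x,[V_4]) : x\in S_{V_4}\}\subset X\times\PP^4$ with projections $\pi\colon\mathcal{S}\to X$ and $\rho\colon\mathcal{S}\to\PP^4$. Because a general point of $X$ lies on a two-dimensional family of the $S_{V_4}$, the map $\pi$ is dominant, and a dominant proper morphism induces a surjection $\pi_*\colon\CH_1(\mathcal{S})_{\QQ}\twoheadrightarrow\CH_1(X)_{\QQ}$: each integral curve $C\subset X$ equals $\tfrac1m\pi_*[\widetilde C]$ for a one-dimensional component $\widetilde C$ of $\pi^{-1}(C)$ that is generically finite of degree $m$ over $C$. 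Now $\CH_1(\mathcal{S})_{\QQ}$ is generated by curves contained in a fibre $S_{V_4}$ of $\rho$ and by curves dominating a curve in $\PP^4$; the former push forward to combinations of conics by the previous paragraph. For the latter I would restrict to a general pencil of surfaces over a line $\ell\subset\PP^4$ and move the resulting horizontal (multisection) curve, using the two-dimensional supply of del Pezzo surfaces through each point and the rational connectedness of $X$, into a cycle supported on the fibres $S_{V_4}$.

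For the integral statement one upgrades the reduction: since $\operatorname{Pic}(S_{V_4})$ is generated over $\ZZ$ by its lines, it is enough to show that every curve on $X$ is rationally equivalent over $\ZZ$ to an integral combination of curves lying on the surfaces $S_{V_4}$, and here one may also use that a degenerate $\sigma$-conic is $l\cup\iota_1(l)$, so the conic classes occurring in the fibres are already integral sums of lines. The main obstacle is exactly this horizontal, integral moving step: a general curve lies on no single $S_{V_4}$ (the surfaces have codimension two in $X$), so one cannot simply restrict, and the surjectivity of proper pushforward is available only after tensoring with $\QQ$. Controlling the horizontal part therefore demands a genuine moving argument inside $X$ — realizing the required rational equivalences by $\PP^1$-families of curves that sweep the horizontal cycle onto the del Pezzo covering — where the abundance of the family of $S_{V_4}$, and the explicit fibration $\alpha$ whose fibres are the pencils $|c|$ on the $S_{V_4}$, must be brought to bear.
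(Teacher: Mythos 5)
Your proposal has a genuine gap, and it sits exactly where you flag it: the ``horizontal moving step.'' Both halves of your argument reduce to the claim that an arbitrary curve on $X$ (equivalently, a horizontal curve on the incidence variety $\mathcal{S}\subset X\times\PP^4$) can be moved into cycles supported on the fibres $S_{V_4}$, and this is never proved. It is not a technicality: the surfaces $S_{V_4}$ have codimension two in $X$, so a general curve lies on none of them, and for a fibration such as $\rho\colon\mathcal{S}\to\PP^4$ the group $\CH_1(\mathcal{S})_{\QQ}$ is genuinely \emph{not} generated by vertical curves (already for a product $S\times\PP^4$ the horizontal classes in $\CH_0(S)\otimes\CH_1(\PP^4)$ survive). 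So the decomposition ``vertical plus horizontal'' is a tautology, and the entire content of the proposition is concentrated in the step you leave as a gesture toward ``a genuine moving argument.'' Moreover, even if the $\QQ$-statement were settled this way, your integral statement would still be out of reach, since, as you note yourself, proper pushforward is only surjective after tensoring with $\QQ$.

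The paper closes precisely this gap by invoking Tian--Zong's structure theory for $1$-cycles on rationally connected varieties \cite{tian2014one}, using your same geometric input (the covering family $S_{V_4}$) but in a much weaker form: it only needs that any two general points of $X$ lie on a common quartic del Pezzo surface $S_{V_4}$, hence are joined by a chain of three conics (take two conic fibrations of $S_{V_4}$). Chain-connectedness by conics plus \cite[Proposition 3.1]{tian2014one} immediately yields an integer $N$ with $N\cdot D\sim$ a sum of conics for every $1$-cycle $D$, which is the $\QQ$-statement --- no moving of curves onto the surfaces is ever needed. For the $\ZZ$-statement the paper uses \cite[Theorem 1.3]{tian2014one} ($\CH_1(X)$ is generated by rational curves), bend-and-break (a rational curve of degree $\geq 3$ is algebraically equivalent to a sum of lower-degree rational curves), the fact you also use that a conic on a del Pezzo surface is linearly equivalent to a sum of lines, and finally the divisibility of $\CH_1(X)_{\rm alg}$ (\cite[Lemma 3.4]{tian2014one}): writing $C-\sum_i n_iL_i = N\cdot C'$ in $\CH_1(X)_{\rm alg}$ and applying the $\QQ$-statement to $C'$ turns the algebraic equivalence into a rational one. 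Unless you supply a substitute for these results --- an actual mechanism realizing your ``sweeping'' of horizontal cycles onto the del Pezzo covering --- neither half of your proposal is complete.
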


\begin{proof}
We first show that $X$ is connected by chains of conics. Every two general points $x_1,x_2$ on $X$ correspond to two $2$-dimensional subspaces in $V_5$, hence the two subspaces are in some $4$-dimensional $V_4\subset V_5$. Thus the two points are contained in $S_{V_4}$, which is a degree $4$ Del Pezzo surface. Choosing two conic fibrations of $S_{V_4}$, we see that the $x_1$ and $x_2$ are connected by a chain of $3$ conics. Hence by \cite[Proposition 3.1]{tian2014one}, there is a non-zero integer $N$, such that for any 1-cycle $D$ on $X$, $$N\cdot D\sim {\rm sum~of~conics}.$$ Hence $\CH_1(X)_{\QQ}$ is generated by conics.

For integral coefficients, it has been shown in \cite[Theorem 1.3]{tian2014one} that $\CH_1(X)$ is generated by rational curves. By bend and break, a rational curve $C$ of degree $\geq 3$ on $X$ is algebraically equivalent to a sum of lower-degree rational curves. Moreover, a conic on a Del Pezzo surface is linearly equivalent to a sum of lines, therefore $C$ is algebraically equivalent to a sum of lines $\sum_i n_iL_i$. The argument in \cite[Lemma 3.4]{tian2014one} shows that $\CH_1(X)_{\rm alg}$ is divisible. Therefore there exists a 1-cycle $C'$ such that $$N\cdot C'= C-\sum_i n_iL_i \in \CH_1(X)_{\rm alg},$$ since $N\cdot C'$ is rationally equivalent to a linear combination of lines, we find that $\CH_1(X)$ is generated by lines with $\ZZ$-coefficients. 
\end{proof}

\begin{lemma}\label{2.1}
For $y\in \tY^\vee$, we have $\Psi(y+\iota(y))$ is constant in $\CH_1(X)$. In particular, $\Psi(2y)$ is constant in $\CH_1(X)$ if $y$ is on the invariant surface $Z$.
\end{lemma}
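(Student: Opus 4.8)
The plan is to translate the assertion into a statement about the hyperplane-section class of the del Pezzo surfaces $S_{V_4}$, and then to show that this class is rigid under rational equivalence. First I would unwind the definition of $\Psi$: for a general $y\in\tY^\vee$ the fiber $\alpha^{-1}(y)$ is a pencil of conics lying on a single $S_{V_4}$, and $\Psi(y)=q_*p^*\alpha_*^{-1}(y)=[c]$ is the class of any one conic $c$ in that pencil. By the geometric description of $\iota$ recalled above, a representative of $\iota(y)$ is the conic $c'$ residual to $c$ in a hyperplane section of $S_{V_4}\subset \PP(\bigwedge^2 V_4)\cap H\cong\PP^4$, so that $\Psi(\iota(y))=[c']$. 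Since $c+c'$ is cut out on $S_{V_4}$ by a hyperplane of this $\PP^4$, and this $\PP^4$ is a linear section of the Pl\"ucker $\PP^9$, the cycle $c+c'$ lies in the class $h\cdot[S_{V_4}]$, where $h$ is the Pl\"ucker polarization. Hence
\[
\Psi(y+\iota(y))=[c]+[c']=h\cdot[S_{V_4}]\in\CH_1(X).
\]

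It then remains to prove that $h\cdot[S_{V_4}]$ does not depend on $V_4$, for which I would establish the stronger statement that the class $[S_{V_4}]\in\CH_2(X)$ is itself independent of $V_4$. The surfaces $S_{V_4}$ are the fibers of the universal family $\mathcal{S}\subset X\times\PP(V_5^\vee)$ over $\PP(V_5^\vee)\cong\PP^4$, whose closure defines a correspondence $[\mathcal{S}]\in\CH(X\times\PP^4)$. For general $[V_4]$ the fiberwise intersection is proper and $[S_{V_4}]=\pr_{X*}([\mathcal{S}]\cdot\pr_2^*[V_4])$, so the assignment $[V_4]\mapsto[S_{V_4}]$ factors through $\CH_0(\PP^4)$. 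Because $\CH_0(\PP^4)=\ZZ$ is generated by the class of a single point, all general fibers $[S_{V_4}]$ coincide in $\CH_2(X)$; intersecting with $h$ shows that $\Psi(y+\iota(y))$ equals the constant class $h\cdot[S_{V_4}]$. For the final assertion, if $y\in Z$ then $\iota(y)=y$, so $\Psi(2y)=\Psi(y+\iota(y))$ is this same constant.

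The main obstacle is the constancy of $[S_{V_4}]$ in $\CH_2(X)$: one must take care that the universal family is set up as a genuine cycle on $X\times\PP^4$ and that the fiberwise intersection computes $[S_{V_4}]$ for general $V_4$, discarding the locus where $S_{V_4}$ degenerates (which is harmless, since $\CH_0$ of a dense open subset of $\PP^4$ is still $\ZZ$). The identification $\Psi(y)=[c]$ and the residual-conic description of $\iota$ are already provided in the preliminaries, so the remaining steps are essentially bookkeeping and the only genuine input is the correspondence argument over the rationally trivial base $\PP^4$.
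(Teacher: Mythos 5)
Your main computation is correct and is essentially the paper's own argument in different bookkeeping: both proofs identify $\Psi(y)+\Psi(\iota(y))$ with the class of a hyperplane section of $S_{V_4}$ and then deduce constancy from the $\CH_0$-triviality of a rational parameter space. The paper parameterizes the hyperplane sections themselves by a $\PP^3$-bundle over $\PP(V_5^{\vee})$, whereas you factor the class as $h\cdot[S_{V_4}]$ and move $[V_4]$ inside $\CH_0(\PP(V_5^{\vee}))=\ZZ$ via the universal family; these are the same mechanism, and your version of it is sound.

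There is, however, a genuine gap at the non-general points, and it sits exactly on the clause of the lemma that the paper later uses. Your derivation of $\Psi(y+\iota(y))=h\cdot[S_{V_4}]$ is only valid for $y$ in the dense open locus where $\alpha^{-1}(y)$ is a pencil of conics on a single smooth $S_{V_4}$ and where $\iota$ admits the residual-conic description. Points of the invariant surface $Z$ are not known to lie in that locus ($Z$ maps to the branch surface of the double cover, where the fiber structure of $\alpha$ and the description of $\iota$ are exactly the delicate ones), so your closing sentence ``if $y\in Z$ then $\iota(y)=y$, so $\Psi(2y)=\Psi(y+\iota(y))$ is this same constant'' is circular as written: constancy of $\Psi(y+\iota(y))$ has not been established at such $y$. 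The missing step is the standard spreading argument, which is the first sentence of the paper's proof: the assignment $y\mapsto\Psi(y+\iota(y))$ is induced by a correspondence, namely $\Psi\circ(\mathrm{id}+\iota)_*$ acting on $\CH_0(\tY^\vee)$, and every point $y_0\in\tY^\vee$ is rationally equivalent to a degree-one $0$-cycle $\sum_i n_i z_i$ supported on the general locus; hence $\Psi(y_0+\iota(y_0))=\sum_i n_i\,\Psi(z_i+\iota(z_i))=\left(\sum_i n_i\right)\cdot h\cdot[S_{V_4}]$ equals the same constant for every $y_0$, in particular for $y_0\in Z$. With that one addition your proof is complete.
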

\begin{proof}
Since any $0$-cycle is rationally equivalent to some $0$-cycle supported on an open subscheme, we may assume that $y\in \tY^\vee$ is a general point such that $y\notin Z$. We recall the description of the map $$F(X)\xrightarrow{\alpha} \tY^\vee \rightarrow Y^\vee,$$ in \cite[Proposition 4.9]{iliev2011fano}.  Then the conics in $\alpha^{-1}(y)$ are identical to  conics in a linear system in $S_{V_4}$ for some $V_4$. The involution $\iota$ sends conics in $S_{V_4}$ to their residual conics.

It suffices to show that for a general $V_4$ and any $c, c'$ residual in  $S_{V_4}$, the sums $c+c'$ are constant in $\CH_1(X)$, which follows from the fact that the hyperplane sections in such $S_{V_4}$'s are parameterized by a $\PP^3$-bundle over $\PP(V_5^{\vee})$.
\end{proof}

\subsection{Relations in \texorpdfstring{$\CH^*(F)$}{F} of incidences}
We introduce an incidence $I\in \CH^2(F\times F)$ and compute $I^2\in \CH^4(F\times F)$, where $F=F(X)$. Denote by $c_s$ the corresponding conic to $s\in F$.
The morphisms are the same as in diagram \eqref{notation}.
\begin{definition}
\begin{enumerate}
    \item The incidence subscheme $I\subset F\times F$ is defined to be 
     \begin{equation*}
     I:=\{(s,t)\in F\times F~|~c_s\cap c_t\neq \emptyset\},
     \end{equation*}
    which is endowed with the reduced closed subscheme structure and of dimension $8$.
    \item $D_c= p(q^{-1}(c))\subset F$ is the locus in $F$ consisting of conics intersecting the given conic $c$ and $D_c'=q^{-1}(c)$.
    
    \item $\Sigma_1\subset F\times F$ is defined to be
    \begin{equation*}
        \Sigma_1:=\{(s,t)\in F\times F~|~c_s ~\text{and}~ c_t~ \text{are union of two lines and have a common component}\},
    \end{equation*}
     which is of dimension $5$.
    \item $\Sigma_2\subset F\times F$ consists of $(s,t)\in F\times F$, where neither $c_s$ nor $c_t$ is of $\tau$-type.
    \item $W\subset F\times F$ is defined to be 
    \begin{equation*}
        W:=\{(s,t)~|~\exists  V_4,c_s~\text{and}~c_t ~\text{are residual in}~S_{V_4}\}.
    \end{equation*}
    For general $s,t$, that means the two conics meet at two points. Since a conic has a pencil of residual conics in $S_{V_4}$, we have $\text{dim}~ W=6$.
\end{enumerate}
\end{definition}

\begin{lemma}\label{plane}
For a general GM fourfold $X$, two different conics $c_1$ and $c_2$ span different planes. If $c$ is of $\tau$-type, then $\langle c\rangle\cap \Gr(2,V_5)=c$.
\end{lemma}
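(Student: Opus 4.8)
The plan is to prove both assertions of Lemma \ref{plane} using the geometry of $\Gr(2,V_5)$ and the genericity of $X$. For the first claim, that two distinct conics span distinct planes, I would argue by contradiction. Suppose two distinct conics $c_1, c_2 \subset X$ span the same plane $\Pi = \langle c_1\rangle = \langle c_2\rangle \subset \PP^8$. Since both conics lie in $X = \Gr(2,V_5)\cap \PP(W)\cap Q$, the plane $\Pi$ meets the quadric $Q$ along both $c_1$ and $c_2$. But a plane section of a single quadric $Q$ is a conic (possibly degenerate), so $\Pi\cap Q$ is at most a one-dimensional conic unless $\Pi\subset Q$. If $\Pi\not\subset Q$, then $\Pi\cap Q$ is a unique conic, forcing $c_1=c_2$, a contradiction. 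Thus the only way to have two distinct conics is $\Pi\subset Q$, i.e. $\Pi\subset X$, since $\Pi$ already lies in $\PP(W)$. So the heart of the matter reduces to showing that a general GM fourfold contains no plane whose intersection with $\Gr(2,V_5)$ has dimension two.

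For this I would invoke the classification of planes contained in $\Gr(2,V_5)\cap\PP(W)$, or equivalently analyze which planes can lie on $X$. A plane in $\Gr(2,V_5)$ is either of the form $\PP(\bigwedge^2 V_3)$ (a $\sigma$-plane, the lines through a fixed point in a fixed $\PP(V_3)$) or a $\Gr(2,V_3)$-type plane; planes of $X$ itself have been studied by Debarre--Kuznetsov and in \cite{debarre2012period}. The expected dimension count shows that for a general $X$ there are no planes contained in $X$, since the condition $\Pi\subset X$ imposes more conditions than the parameter space of planes in $\Gr(2,V_5)\cap\PP(W)$ can satisfy. Concretely, planes in $\Gr(2,V_5)$ form low-dimensional families, and requiring $\Pi\subset Q$ on top of $\Pi\subset \Gr(2,V_5)\cap\PP(W)$ cuts the family down to empty for a generic choice of the quadric $Q$. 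I would make this precise by an incidence variety argument: the universal family of planes on $\Gr(2,V_5)\cap\PP(W)$ fibered over the space of admissible quadrics $Q$ has image of dimension strictly less than the dimension of the space of quadrics, so the general fiber is empty.

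For the second assertion, that a $\tau$-conic $c$ satisfies $\langle c\rangle\cap\Gr(2,V_5)=c$, I would use the definition of a $\tau$-conic directly: its plane $\langle c\rangle$ is not contained in $\Gr(2,V_5)$. The intersection $\langle c\rangle\cap\Gr(2,V_5)$ is therefore a proper closed subscheme of the plane, hence of dimension at most one. Since $\Gr(2,V_5)$ is cut out by the Pl\"ucker quadrics and $c\subset \Gr(2,V_5)$ is already a conic (degree two curve) inside $\langle c\rangle$, the intersection $\langle c\rangle\cap\Gr(2,V_5)$ contains $c$ and is contained in a plane section of the Pl\"ucker quadrics. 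A plane section of $\Gr(2,V_5)$ that strictly contains a conic and has dimension one would have to have degree exceeding two against the quadratic equations, so the degree-two curve $c$ already accounts for the whole one-dimensional part; combined with the plane not lying in $\Gr(2,V_5)$, this forces $\langle c\rangle\cap\Gr(2,V_5)=c$ as schemes.

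The main obstacle I anticipate is the dimension count in the first part: carefully establishing that a \emph{general} GM fourfold contains no two-dimensional linear section of $\Gr(2,V_5)\cap\PP(W)$ lying on $X$. This requires knowing the precise geometry and dimension of the family of planes in $\Gr(2,V_5)$ compatible with the fixed linear space $\PP(W)$, and then a genericity argument on the quadric $Q$. The second assertion is comparatively routine once the definition of $\tau$-conic is unwound, so I would allocate most of the effort to making the plane-avoidance argument rigorous, possibly citing the classification of planes on GM fourfolds from \cite{debarre2012period} rather than reproving it.
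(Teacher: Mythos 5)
Your proposal is essentially correct and rests on the same two pillars as the paper's proof: (a) a general GM fourfold contains no planes, and (b) a variety cut out by quadrics meets a plane not contained in it in at most a conic, so such a plane contains at most one conic of the variety. The paper is simply more economical: it takes (a) as a known fact, and for an arbitrary conic $c$ it picks a single quadric $Q_0\supset X$ with $\langle c\rangle\not\subset Q_0$ (possible by (a), since $X$ is an intersection of quadrics), which gives $\langle c\rangle\cap X\subset\langle c\rangle\cap Q_0=c$ in one stroke; the second assertion is then the identical argument with $\Gr(2,V_5)$ in place of $X$, exactly as you do. By contrast you spend most of your effort on (a), which is legitimate but can be cited.

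Two things need repair. First, the step ``$\Pi\subset Q$, i.e.\ $\Pi\subset X$, since $\Pi$ already lies in $\PP(W)$'' is a non sequitur: $X=\Gr(2,V_5)\cap\PP(W)\cap Q$, so you still need $\Pi\subset\Gr(2,V_5)$. The fix is the very tool you use for the second assertion: if $\Pi\not\subset\Gr(2,V_5)$, some Pl\"ucker quadric $Q'\supset\Gr(2,V_5)$ does not contain $\Pi$, and then $c_1\cup c_2\subset\Pi\cap Q'$ forces $c_1=c_2$; hence two distinct conics do force $\Pi\subset\Gr(2,V_5)$, and only then does your reduction to ``no planes on $X$'' go through. (Your next sentence, about the intersection with $\Gr(2,V_5)$ having dimension two, shows you sensed this, but the inference is never supplied.) Second, your classification of planes in $\Gr(2,V_5)$ is garbled: the two families are $\PP(V_1\wedge V_4)$ (lines through a fixed point lying in a fixed hyperplane of $\PP(V_5)$) and $\PP(\bigwedge^2V_3)=\Gr(2,V_3)$; you list $\PP(\bigwedge^2V_3)$ twice under different names and attach the wrong geometric description to it. This is cosmetic for your purposes, since the dimension count still works (each family of planes inside $\Gr(2,V_5)\cap\PP(W)$ has dimension at most $4$, while $\Pi\subset Q$ imposes $6$ conditions), but it should be stated correctly or replaced by a citation. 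Finally, in the second assertion your degree argument only controls the one-dimensional part of $\langle c\rangle\cap\Gr(2,V_5)$; to get the scheme-theoretic equality (ruling out embedded or isolated points) argue as above: $\langle c\rangle\cap\Gr(2,V_5)\subset\langle c\rangle\cap Q'=c$ for a single Pl\"ucker quadric $Q'$ not containing the plane.
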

\begin{proof}
A general $X$ contains no planes. Since $X$ is the intersection of quadrics, $\langle c\rangle$ is not contained in some quadric $Q_0\supset X$. Hence, $\langle c\rangle\cap X\subset Q_0\cap \langle c\rangle=c$, which means that $\langle c\rangle$ determines $c$. 

The second statement follows from the fact that the plane spanned by a $\tau$-conic is not contained in $\Gr(2,V_5)$ and $\Gr(2,V_5)$ is also the intersection of quadrics.
\end{proof}
According to Lemma \ref{plane}, we obtain  an injective morphism  
\begin{align}
    F\rightarrow \Gr(3,10), 
\end{align} sending $t\in F$ to $\langle c_t\rangle\subset \PP(\bigwedge^2V_5)$. Let $\PP=\PP(E)$ be the universal plane of $\Gr(3,10)$ restricted to $F$, we have the following diagram:
\begin{equation}
    \xymatrix{
   P \ar[r]^q \ar[rd]^i \ar[rdd]^p & X \ar[rd]             \\
  &\PP\ar[d]^{p'}  \ar[r]^{q'} & \PP(\bigwedge^2V_5)         \\
  & F                      }.
\end{equation}

\begin{lemma}\label{L2.3}
The evaluation morphism $q$ is flat for a general $X$. In particular, the class $I=(p\times p)_*(q\times q)^*\Delta_X\subset \CH^2(F\times F)$. Equivalently, the correspondence $I=$ $^{t}P \circ P$.
\end{lemma}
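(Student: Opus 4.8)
The plan is to prove the two assertions in turn: first the flatness of the evaluation morphism $q\colon P\to X$, and then the identity $I=(p\times p)_*(q\times q)^*\Delta_X$ in $\CH^2(F\times F)$, which is the correspondence statement $I = {}^t P\circ P$.

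First I would establish flatness of $q$. Since $P$ is the universal conic, the fiber of $q$ over a point $x\in X$ is precisely the subscheme of $F=F(X)$ parametrizing conics passing through $x$, i.e.\ the locus $\{s\in F\mid x\in c_s\}$. As $F$ is a smooth fivefold and $X$ is smooth of dimension $4$, while $P$ is irreducible of dimension $6$ (a $\PP^1$-bundle of conics over $F$ has total space of dimension $5+1=6$), a general fiber of $q$ has dimension $2$. The key point is that for a general GM fourfold every fiber has the \emph{same} dimension $2$: through any point $x\in X$ the conics form a family of the expected dimension, with no jumping. Here I would invoke the geometry recalled in the preliminaries — through a general point $x$ one has $x\in S_{V_4}$ for the various $V_4\ni$ the relevant $2$-plane, and the conics through $x$ are cut out inside these Del Pezzo surfaces. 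One then argues that $q$ is a surjective morphism of smooth(or at least Cohen--Macaulay) varieties with equidimensional fibers; by the standard ``miracle flatness'' criterion (a morphism from a Cohen--Macaulay scheme to a regular scheme with equidimensional fibers is flat), $q$ is flat. The main obstacle is precisely the equidimensionality: one must rule out that the conics through some special point $x$ sweep out a locus of dimension $>2$ in $F$, and this is where the generality hypothesis on $X$ is genuinely used.

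Granting flatness, the correspondence identity is essentially formal. Flatness of $q$ makes $q^*$ well behaved, so that $q\times q\colon P\times P\to X\times X$ is flat and $(q\times q)^*\Delta_X$ is the fundamental class of the fiber product $(q\times q)^{-1}(\Delta_X)=\{(u,u')\in P\times P\mid q(u)=q(u')\}$, which is the incidence of pairs of conics sharing the marked point. I would then push forward along $p\times p\colon P\times P\to F\times F$. A point $(s,t)\in F\times F$ lies in the image with nontrivial contribution exactly when $c_s$ and $c_t$ share a point, i.e.\ $(s,t)\in I$; and for general such $(s,t)$ the two conics meet transversally in a single point, so the map $(p\times p)$ restricted to $(q\times q)^{-1}(\Delta_X)$ is generically one-to-one onto $I$. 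Hence $(p\times p)_*(q\times q)^*\Delta_X = m\,[I]$ with multiplicity $m=1$, giving $I=(p\times p)_*(q\times q)^*\Delta_X$. Rewriting $P$ as the correspondence $p_*q^*$ from $\CH^*(X)$ to $\CH^*(F)$ and ${}^tP$ as $q_*p^*$, the projection formula identifies $(p\times p)_*(q\times q)^*\Delta_X$ with the composition ${}^tP\circ P$, which is the stated equivalence.

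I expect the flatness step to be the crux. The correspondence identity reduces, once flatness is in hand, to a dimension count plus a transversality/degree-one check on the generic point of $I$, both of which are routine given that distinct conics span distinct planes (Lemma \ref{plane}) and meet in the expected way. The only subtlety to watch is whether the scheme-theoretic fiber product $(q\times q)^{-1}(\Delta_X)$ could carry excess components (e.g.\ over the degenerate loci $\Sigma_1,\Sigma_2$ where conics break into lines) that would alter the pushforward class; I would argue these lie in lower dimension and therefore do not contribute to the codimension-$2$ cycle $I$, so the generic computation determines the class on all of $F\times F$.
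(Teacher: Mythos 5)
Your overall strategy coincides with the paper's: flatness via the miracle flatness criterion ($P$ Cohen--Macaulay, $X$ regular, equidimensional fibers), then the identity $I=(p\times p)_*(q\times q)^*\Delta_X$ as a formal consequence (your transversality/degree-one discussion for the pushforward is fine, and is essentially what the paper leaves implicit after ``In particular''). But there is a genuine gap: you name the equidimensionality of the fibers of $q$ as ``the main obstacle'' and ``the crux'', and then never prove it. Your appeal to the Del Pezzo surfaces $S_{V_4}$ only shows that the fiber over a \emph{general} point $x\in X$ is $2$-dimensional, which is the easy statement; flatness needs that \emph{no} fiber jumps, and ruling out a component of $q^{-1}(x)$ of dimension $\geq 3$ over a special point is precisely the mathematical content of the lemma. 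As written, your argument establishes only generic flatness, which does not suffice to identify $(q\times q)^*\Delta_X$ with the cycle of the incidence and hence does not yield the class identity.

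For comparison, the paper closes this gap in two steps. First, a dimension count shows that any component $K\subset q^{-1}(x)$ with $\dim K\geq 3$ must contain a point $(c,x)$ where $c$ is a \emph{smooth} conic of $\tau$- or $\rho$-type: the $\sigma$-conics through $x$ form a family of dimension at most $2$ (one-dimensional when $V_1\neq W_1$, two-dimensional when $V_1=W_1$), and singular conics are controlled by the lines through $x$, which form at most a one-dimensional family by \cite[Theorem 4.7]{debarre2012period}. Second, for such a conic one has $N_{c/X}=\cO_c\oplus\cO_c(1)\oplus\cO_c(1)$, and the analysis of the differential of $q$ along $p^{-1}(c)$ (diagram \eqref{differential}) identifies the tangent space as
\begin{equation*}
T_{(c,x)}q^{-1}(x)=\{s\in \mathrm{H}^0(N_{c/X})~|~s_x=0\},
\end{equation*}
which is $2$-dimensional, contradicting $\dim K\geq 3$. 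Note this is where the generality of $X$ genuinely enters (through the normal bundle classification and the bound on lines through a point), not through any genericity of $x$. A smaller point: $P$ is not smooth (the universal conic has singular members), so your parenthetical ``or at least Cohen--Macaulay'' is the operative one; the paper justifies it by exhibiting $P$ as a divisor in the smooth $\PP^2$-bundle $\PP$ over $F$, and some such justification is needed before miracle flatness can be invoked.
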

\begin{proof}
The universal conic $P$ is Cohen-Macaulay since it is a divisor in a smooth variety $\PP$, hence it suffices to show that the dimensions of fibers of $q$ are constant.
Suppose on the contrary that there is a point $x\in X$ such that $q^{-1}(x)$ has a component $K$ of dimension $\geq 3$. We claim that
\begin{claim}
There is a smooth conic $c$ passing through $x$ such that $(c,x)\in P$ belongs to $K$ and $c$ is of $\tau$-type or $\rho$-type.
\end{claim}
\begin{claimproof}
Assume that $x=V_2$. It suffices to show that the dimension of the locus of $\sigma$-conics and singular conics passing through $x$ is less than $3$.

In fact, the $\sigma$-conics are parameterized by the planes $$\PP(V_1\bigwedge V_4)\subset H,$$ where $V_1\subset V_4$. When $V_1\neq W_1$, the locus of $\sigma$-conics passing through $x$ are equal to the image of $\PP(V_2)$ in $Bl_{[W_1]}(\PP(V_5))$, which is $1$-dimensional. When $W_1= V_1$, a $\sigma$-conic  passing through $x$ corresponds to a plane $\langle c\rangle=\PP(V_1\bigwedge V_4)$ with $V_2\subset V_4\subset V_5$, and there is a $2$-dimensional choice of $V_4$.  For singular conics,  there are at most $1$-dimensional lines passing through a point on $X$ by \cite[Theorem 4.7]{debarre2012period}. Therefore the claim follows. 
\end{claimproof}

The fiber $p^{-1}(c)=c\subset P$ can be identified with $c\subset X$ via $q$ and $$N_{c/P}=\mathrm{H}^0(N_{c/X})\otimes \cO_c.$$ Considering the differential of $q$ restricted to $c\subset P$, we obtain the following diagram, see also \cite[section 2.3]{kuznetsov2018hilbert}:
\begin{equation}\label{differential}
    \xymatrix{
    &0\ar[r] &Tc\ar[r]\ar[d]^*[@180]{\cong} &TP|_c \ar[r]\ar[d]^{dq|_c} &\mathrm{H}^0(N_{c/X})\otimes \cO_c\ar[r]\ar[d]^{ev} &0 \\
    &0\ar[r] &Tc\ar[r] &TX|_c \ar[r] &N_{c/X}\ar[r] &0
    }.
\end{equation}
The normal bundle $N_{c/X}=\cO_c\oplus\cO_c(1)\oplus\cO_c(1)$ for a smooth conic of $\tau$-type or $\rho$-type. Therefore, 
\begin{equation*}
    T_{(c,x)}q^{-1}(x)=\{s\in \mathrm{H}^0(N_{c/X})~|~s_x=0\},
\end{equation*}
 which is $2$-dimensional, contradicting to the fact that $\mathrm{dim}~K\geq 3$.

\end{proof}

\begin{lemma} \label{sec}
Let $c_1$ and $c_2$ be two different conics on a general $X$ and assume that at least one of them is of $\tau$-type. If $c_1$ and $c_2$ have no common component and $\langle c_1\rangle\cap\langle c_2\rangle$ is a line $l$, there exists a $4$-dimensional subspace $V_4\subset V_5$, such that $\langle c_1\rangle\cup \langle c_2\rangle\subset \PP(\bigwedge^2V_4)$. In particular, $\iota(\alpha([c_1]))=\alpha([c_2])$.
\end{lemma}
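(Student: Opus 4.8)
The plan is to reduce the first assertion to a single statement in linear algebra about the two planes $\langle c_1\rangle,\langle c_2\rangle\subset\PP(\bigwedge^2V_5)$, and then read off the residuality claim. Write $\langle c_i\rangle=\PP(U_i)$ with $U_i\subset\bigwedge^2V_5$ three-dimensional, so $L:=U_1\cap U_2$ is two-dimensional and $l=\PP(L)$. For $\phi\in V_5^\vee$ the hyperplane $V_4=\ker\phi$ satisfies $\bigwedge^2V_4=\{\omega:\phi\lrcorner\omega=0\}$, where $\phi\lrcorner(-)\colon\bigwedge^2V_5\to V_5$ is contraction; hence $\langle c_i\rangle\subset\PP(\bigwedge^2V_4)$ is equivalent to $\phi\lrcorner U_i=0$. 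Since each $c_i$ lies in some $S_{V_4^{(i)}}=\Gr(2,V_4^{(i)})\cap H\cap Q$, we already have one $\phi_i$ with $\phi_i\lrcorner U_i=0$, and the goal $\langle c_1\rangle\cup\langle c_2\rangle\subset\PP(\bigwedge^2V_4)$ amounts to producing a single nonzero $\phi$ with $\phi\lrcorner(U_1+U_2)=0$, i.e.\ a common $V_4$.

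First I would record a dichotomy. Both $\phi_1,\phi_2$ annihilate $L\subset U_1\cap U_2$. If they are proportional, then $V_4^{(1)}=V_4^{(2)}$ and we are done. Otherwise $V_3:=\ker\phi_1\cap\ker\phi_2$ is three-dimensional, and the elementary identity $\bigwedge^2\ker\phi_1\cap\bigwedge^2\ker\phi_2=\bigwedge^2V_3$ (a quick dimension count) forces $L\subset\bigwedge^2V_3$, so that $l\subset\PP(\bigwedge^2V_3)=\Gr(2,V_3)\subset\Gr(2,V_5)$. Thus a common $V_4$ exists unless $l\subset\Gr(2,V_5)$.

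The heart of the argument is therefore to exclude $l\subset\Gr(2,V_5)$, and this is where the two hypotheses enter. We may assume $c_1$ is of $\tau$-type; then Lemma \ref{plane} gives $\langle c_1\rangle\cap\Gr(2,V_5)=c_1$, so $l\subset\Gr(2,V_5)$ would force $l\subset c_1$, i.e.\ $c_1$ degenerates as $l\cup l'$ with $l$ a line on $X$. Since $l\subset\langle c_2\rangle$ as well, if $c_2$ is also of $\tau$-type then $l\subset\langle c_2\rangle\cap\Gr(2,V_5)=c_2$, making $l$ a common component of $c_1$ and $c_2$, contrary to hypothesis. If $c_2$ is a $\rho$-conic the obstruction disappears, because $\langle c_2\rangle=\PP(\bigwedge^2V_3^{(2)})$ has $V_3^{(2)}\subset V_4^{(1)}$ and $V_4^{(1)}$ then serves as a common $V_4$ (a $\rho$-conic admits many $V_4$). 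The genuinely delicate case, which I expect to be the main obstacle, is $c_2$ a $\sigma$-conic whose unique $V_4^{(2)}$ differs from $V_4^{(1)}$ while $l\subset\langle c_2\rangle$; I plan to rule this out using the genericity of $X$, exploiting that degenerate $\tau$-conics and $\sigma$-conics move in explicit low-dimensional families so that the incidence forcing $l\subset\langle c_2\rangle$ with $V_4^{(2)}\neq V_4^{(1)}$ is empty for general $X$. This yields $V_4^{(1)}=V_4^{(2)}$ in all admissible cases.

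Granting a common $V_4$, the second assertion is immediate. Both conics lie on $S_{V_4}\subset\PP^4=\PP(\bigwedge^2V_4)\cap H$, and since $\langle c_1\rangle$ and $\langle c_2\rangle$ meet along the line $l$, their span $\langle c_1\rangle+\langle c_2\rangle$ is a $\PP^3$, hence a hyperplane of this $\PP^4$. Its intersection with $S_{V_4}$ is a hyperplane section of degree $4$ containing the degree-$4$ curve $c_1\cup c_2$; as $c_1$ and $c_2$ share no component, the two coincide, so $c_1$ and $c_2$ are residual in $S_{V_4}$. By the description of $\iota$ through residual conics in $S_{V_4}$, this gives $\iota(\alpha([c_1]))=\alpha([c_2])$.
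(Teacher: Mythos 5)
Your proposal has a genuine gap, and it sits exactly where you flag it: the case where $c_2$ is a $\sigma$-conic is not proved but only announced ("I plan to rule this out using the genericity of $X$\dots"). No incidence variety is set up, no dimension count is performed, and it is not explained why genericity of $X$ would make the relevant locus empty; since you yourself identify this as the main obstacle, the argument is incomplete at its decisive step. Everything before that (the contraction reformulation, the identity $\bigwedge^2\ker\phi_1\cap\bigwedge^2\ker\phi_2=\bigwedge^2V_3$, the reduction to excluding $l\subset\Gr(2,V_5)$) and everything after it (the residuality paragraph) is correct and matches the paper's argument, but the case analysis on the type of $c_2$ leaves a hole.

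The missing idea is that the reasoning behind Lemma \ref{plane} applies to $X$ itself, not only to $\Gr(2,V_5)$, and hence to conics of \emph{every} type: a general $X$ contains no planes and is an intersection of quadrics, so $\langle c\rangle\cap X=c$ for any conic $c\subset X$. In your situation $l\subset c_1\subset X$, so $l$ is a line on $X$ lying in $\langle c_2\rangle$, whence $l\subset\langle c_2\rangle\cap X=c_2$ and $l$ is a common component of $c_1$ and $c_2$ --- contradicting the hypothesis regardless of whether $c_2$ is of $\tau$-, $\rho$- or $\sigma$-type. (This also shows that your $\rho$-case, though correctly argued on its own terms, is vacuous.) This is precisely how the paper proceeds: it first records that $l\not\subset X$ (no planes, plus no common component, via $\langle c_i\rangle\cap X=c_i$), then observes that $V_{41}\neq V_{42}$ forces $l\subset\PP(\bigwedge^2V_3)\subset\Gr(2,V_5)$, so that $\langle c_i\rangle\cap\Gr(2,V_5)\neq c_i$ for \emph{both} $i$ and, by Lemma \ref{plane}, neither conic is of $\tau$-type --- a single uniform contradiction with no case split. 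Incorporating this one observation closes your gap and collapses your three cases into one.
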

\begin{proof}
A general $X$ contains no planes and hence $l\not\subset X$.
The conic $c_i$ is contained in $\PP(\bigwedge^2V_{4i})$ for some $4$-dimensional vector space $V_{4i}$. If $V_{41}\neq V_{42}$, then $V_3:=V_{41}\cap V_{42}$ is $3$-dimensional and $$l\subset \PP(\bigwedge^2V_3)\subset \Gr(2,V_5).$$ In particular, $\langle c_i\rangle\cap \Gr(2,V_5)\neq c_i$. Therefore
by Lemma \ref{plane}, we have $\langle c_i\rangle\subset \Gr(2,V_5)$ and it means that the two conics are of $\sigma$-type or $\rho$-type, contradicting to the fact that one of the two conics is of $\tau$-type. 

Hence $V_{41}=V_{42}=V_4$ and $c_1 , c_2$ are contained in  $S_{V_4}$. Then $\iota(\alpha(c_1))=\alpha(c_2)$ follows from that $\iota$ sends a conic to its residual conic in $S_{V_4}$. 
\end{proof}

We define $\tilde{I}\subset P\times P$ and $I'\subset \PP\times \PP$ to be the incidence correspondences, i.e. 
    $$\tilde{I}=(q\times q)^{-1}\Delta_X {\rm ~and~} I'=(q'\times q')^{-1}\Delta_{\PP(\bigwedge^2V_5)}.$$ 
 Let $I_0=I\setminus (\Sigma_1\cup\Sigma_2\cup W)$, $\tilde{I_0}=(p\times p)^{-1}I_0$ and $I_0'=(p'\times p')^{-1}I_0$. 
Then Lemma \ref{sec} says that there is a section from $I_0$ to  $I'_0$, sending two intersecting conics to their common point. 
\begin{lemma}\label{L2.5}
The inclusion $I_0\subset F\times F\setminus (\Sigma_1\cup\Sigma_2\cup W)$ is a regular embedding.
\end{lemma}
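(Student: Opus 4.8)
The plan is to show that $I_0\subset F\times F\setminus(\Sigma_1\cup\Sigma_2\cup W)$ is a regular embedding by exhibiting the correct expected codimension together with a clean local-defining-equation description coming from the incidence of the spanning planes. Recall from Lemma \ref{sec} that over $I_0$ the two conics are both of $\tau$-type (or at least one is, with the relevant $V_4$ being distinct), they share exactly one point, and their spanning planes $\langle c_s\rangle,\langle c_t\rangle\subset\PP(\bigwedge^2V_5)$ meet in a line $l$. The key geometric input is that, away from $\Sigma_1\cup\Sigma_2\cup W$, two conics meet transversally in a \emph{single} reduced point, so the incidence condition should be cut out by the expected number of equations.

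First I would work inside the universal-plane diagram. Using the injection $F\hookrightarrow\Gr(3,10)$ of Lemma \ref{plane}, the pair $(s,t)$ determines a pair of planes $(\langle c_s\rangle,\langle c_t\rangle)$ in $\PP(\bigwedge^2V_5)=\PP^9$. Two planes $\PP^2\subset\PP^9$ generically span a $\PP^5$ and meet only in the empty set; the locus where they meet in at least a point is a determinantal/incidence condition of the expected codimension $9-2-2-1=4$ inside the base, matching $\dim(F\times F)-\dim I=10-8=2$ only after one accounts for the constraint that the meeting point lie on both conics rather than merely in the plane intersection. Concretely I would realize $I_0$ as the vanishing of a section of a rank-$2$ bundle on $F\times F\setminus(\Sigma_1\cup\Sigma_2\cup W)$: on $I_0'$ the common point is a \emph{section} (Lemma \ref{sec}), so pulling back the defining conditions of ``the plane-intersection line $l$ meets both conics'' gives two independent equations. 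Checking that $\dim I_0=8$ equals the expected dimension $10-2$ then reduces regularity to verifying that these two equations are locally transverse.

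The cleanest way to conclude is to show the section cutting out $I_0$ is regular, i.e.\ its zero locus has exactly the expected codimension and the Jacobian has maximal rank along $I_0$. For the transversality I would use the normal bundle computation already appearing in \eqref{differential}: for a smooth $\tau$- or $\rho$-conic one has $N_{c/X}=\cO_c\oplus\cO_c(1)\oplus\cO_c(1)$, so the evaluation map $\mathrm{H}^0(N_{c/X})\to N_{c/X,x}$ controlling how the conic can be deformed while still passing through a prescribed point $x$ is surjective with the expected kernel. Feeding this into the tangent space of $I_0$ at a general point $(s,t)$ with common point $x$, the tangent space is the fibered intersection of deformations of $c_s$ and of $c_t$ keeping the shared point, which has dimension $5+5-2=8$. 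Since the scheme $I_0$ is reduced of dimension $8$ and smooth (the intersection being transverse) inside a smooth ambient variety, the inclusion is a regular embedding.

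I expect the main obstacle to be the transversality/dimension count at the boundary of $I_0$, precisely where one must be sure that having removed $\Sigma_1$, $\Sigma_2$ and $W$ genuinely rules out all the degenerate incidences: conics sharing a component, conics that are not of $\tau$-type (whose normal bundles or spanning-plane behaviour differ), and residual pairs in a common $S_{V_4}$ meeting in two points. Away from these loci the intersection $c_s\cap c_t$ is a single reduced point and the planes meet in a line not contained in $\Gr(2,V_5)$, which is exactly what makes the defining section regular; the technical heart is confirming, via \eqref{differential} and the genericity of $X$, that no further unexpected collapse of fiber dimension occurs and that the two incidence equations remain independent along all of $I_0$.
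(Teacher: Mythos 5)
Your proposal takes a genuinely different route from the paper, and it contains gaps that I do not think can be repaired as written. The paper's proof never checks transversality anywhere: it observes that $I'=(q'\times q')^{-1}\Delta_{\PP(\bigwedge^2V_5)}$ is a regular embedding in $\PP\times\PP$, that Lemma \ref{sec} makes the common point of the two conics \emph{unique} at every point of $I_0$ (this is exactly what removing $\Sigma_1\cup\Sigma_2\cup W$ buys), so that $(s,t)\mapsto$ (common point) defines a section of the smooth morphism $(\PP\times\PP)|_{I_0}\to I_0$, and then invokes \cite[B.7.5]{fulton2016intersection} to transfer regularity down to $I_0\subset F\times F\setminus(\Sigma_1\cup\Sigma_2\cup W)$. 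Before comparing, note that you misquote the key lemma: over $I_0$ the spanning planes meet in a single \emph{point}, not a line. Lemma \ref{sec} says that if $\langle c_s\rangle\cap\langle c_t\rangle$ were a line, the two conics would be residual in a common $S_{V_4}$, i.e.\ the pair would lie in $W$ (or $\Sigma_1$, $\Sigma_2$), which has been removed. Consequently your ``two independent equations'' coming from ``the plane-intersection line $l$ meets both conics'' refer to data that does not exist on $I_0$; worse, for a general pair in $F\times F\setminus(\Sigma_1\cup\Sigma_2\cup W)$ the two planes are \emph{disjoint}, so there is no globally defined pair of incidence equations, and the rank-$2$ bundle whose section is supposed to cut out $I_0$ is never constructed. (The numerology is also off: two $2$-planes meeting in $\PP^9$ is a codimension-$5$ condition, not $4$, while $I$ has codimension $2$; this enormous failure of expected codimension, which you acknowledge, is precisely why no naive ``expected-codimension'' argument can be run, and your proposal never resolves it.)

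The fatal gap is in your closing step. You compute the tangent space of $I_0$ at a \emph{general} point, where both conics are smooth of $\tau$-type so that $N_{c/X}=\cO_c\oplus\cO_c(1)\oplus\cO_c(1)$ and the evaluation map is surjective, get dimension $8$, and conclude that $I_0$ is smooth, hence regularly embedded. But a regular embedding is a condition at \emph{every} point of $I_0$, and $I_0$ contains many points your computation cannot see: pairs in which one conic is singular (a union of two lines having no common component with the other conic), or in which one member is a $\rho$- or $\sigma$-conic --- only $\Sigma_2$, where \emph{neither} conic is of $\tau$-type, has been removed. At such points the normal-bundle description from \eqref{differential} is unavailable and generic smoothness gives nothing; indeed the lemma does not assert that $I_0$ is smooth, only that the embedding is regular. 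This is exactly the difficulty the paper's argument is engineered to avoid: the section exists and is a section of a \emph{smooth} morphism at every point of $I_0$, uniformly, with no case analysis on the type or smoothness of the conics, and regularity then descends by \cite[B.7.5]{fulton2016intersection} together with the regularity of $I'\subset\PP\times\PP$. If you want to salvage your approach, the honest version of it is the paper's: replace pointwise transversality by the section trick (or, alternatively, exploit the flatness of $q$ from Lemma \ref{L2.3} to pull back the regular embedding $\Delta_X\subset X\times X$), since any argument quantified over ``general'' points of $I_0$ cannot prove this statement.
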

\begin{proof}
The inclusion $I'\subset \PP\times\PP$ is a regular embedding since it is obtained from $\Delta_{\PP(\bigwedge^2V_5)}\subset \PP(\bigwedge^2V_5)\times \PP(\bigwedge^2V_5)$ via base change. By the discussion above, there exists a section from $I_0$ to $I'_0$. We apply \cite[B.7.5]{fulton2016intersection} and obtain that $I_0\subset F\times F \setminus (\Sigma_1\cup\Sigma_2\cup W)$ is a regular embedding. 
\end{proof}

Now we can compute the self-intersection $I^2$, the analogous case of cubic fourfolds can be found in \cite[Proposition 2.3]{voisin4chow} or \cite[Appendix A.4]{fu2019generalized}.
\begin{proposition}\label{formula}
The self-intersection $$I^2=aW+I\cdot A+B+C\in \CH^4(F\times F)$$ for some constant $a$.  Here, $A$ and $B$ are contained in $pr_1^*\CH^*(F)\cdot pr_2^*\CH^*(F)\subset \CH^*(F\times F)$, where $pr_i$ is the  projection and $C$ is supported on $\Sigma_2\subset F\times F$.
\end{proposition}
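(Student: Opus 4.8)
The plan is to compute $I^2$ by exploiting the factorization $I = {}^{t}P \circ P$ established in Lemma \ref{L2.3}, so that $I^2 = {}^{t}P\circ P\circ {}^{t}P\circ P$. The middle composition $P\circ {}^{t}P$ is governed by the evaluation map $q$, whose fibers I understand well from the flatness proof: for a generic point $x\in X$, the fiber $q^{-1}(x)$ is $2$-dimensional, parameterizing conics through $x$. Concretely, $I^2$ as a correspondence on $F\times F$ records pairs $(s,u)$ joined by a chain $c_s \cap c_t \neq \emptyset$ and $c_t\cap c_u\neq\emptyset$ for a connecting conic $c_t$, counted with multiplicity. So the first step is to stratify $I^2$ according to the geometry of how $c_s$ and $c_u$ relate to the connecting family.

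The geometric heart is to identify the generic component. For generic $(s,u)$ lying in the support of $I^2$, the two conics $c_s, c_u$ meet two connecting conics, and I expect the dominant contribution to come exactly from the locus $W$ where $c_s$ and $c_u$ are residual in some common $S_{V_4}$ — this is precisely the $6$-dimensional locus singled out in the definition, and Lemma \ref{sec} tells us when two meeting conics are forced into a common $\PP(\bigwedge^2 V_4)$. Thus I would first argue that, away from $\Sigma_1\cup\Sigma_2\cup W$, the excess intersection contribution to $I^2$ is controlled, using the regular-embedding result of Lemma \ref{L2.5}. On $I_0 = I\setminus(\Sigma_1\cup\Sigma_2\cup W)$ the section into $I_0'$ lets me compute the self-intersection via the normal bundle of the diagonal $\Delta_{\PP(\bigwedge^2 V_5)}$, pulled back and pushed forward, which produces the terms $I\cdot A + B$ living in $\pr_1^*\CH^*(F)\cdot \pr_2^*\CH^*(F)$ by the standard excess-intersection / diagonal self-intersection formula (as in the cubic fourfold analogue of \cite[Proposition 2.3]{voisin4chow}). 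The coefficient $a$ of $W$ then records the generic multiplicity with which a residual pair arises from the two-step incidence.

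The technical mechanism is the double-point / excess formula for $I = {}^{t}P\circ P$. Writing $I^2$ as the pushforward under $p\times p$ of a class on $P\times P$ built from $\tilde I$ and the self-intersection of the correspondence, I would separate: (i) the locus where the connecting conic $c_t$ is forced to coincide or share a component with $c_s$ or $c_u$ — this feeds into the $\Sigma_2$-supported term $C$, since degenerate (non-$\tau$) conics are where the flatness and normal-bundle control of diagram \eqref{differential} break down; (ii) the main residual locus contributing $aW$; and (iii) the ``product'' part $I\cdot A + B$ coming from the regular embedding over the open stratum, where $A$ is (up to the excess factor) essentially the total Chern class of the normal bundle and $B$ collects the constant-times-diagonal-of-$F$ type contributions. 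The fact that $A,B$ lie in $\pr_1^*\CH^*(F)\cdot\pr_2^*\CH^*(F)$ is exactly the statement that, on the open regular-embedding stratum, the excess class is pulled back from the two factors separately.

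The main obstacle, I expect, is controlling the boundary strata $\Sigma_1$, $\Sigma_2$, and $W$ simultaneously and showing that everything not accounted for by $aW$, $I\cdot A+B$ is genuinely supported on $\Sigma_2$ and can be absorbed into $C$. The flatness of $q$ (Lemma \ref{L2.3}) and the regular embedding (Lemma \ref{L2.5}) give clean control only on the open part; near reducible conics and $\sigma$/$\rho$-conics the fiber dimensions of $q$ jump, the normal bundle degenerates, and the naive residual-conic section from Lemma \ref{sec} fails. I would therefore spend most of the effort verifying a dimension count: that each boundary stratum contributes either to a term already listed or to a cycle supported on $\Sigma_2$, so that no uncontrolled excess class of the wrong dimension survives. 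The precise value of $a$ and the exact classes $A,B$ are routine once this stratification is pinned down, so I would defer those computations and focus the argument on the support statement, which is all the proposition asserts.
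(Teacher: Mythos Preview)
Your proposal has a real confusion at the outset: $I^2$ in the proposition is the \emph{self-intersection} $I\cdot I$ in $\CH^*(F\times F)$, not the composition $I\circ I$ of correspondences. The factorization $I={}^{t}P\circ P$ does \emph{not} give $I\cdot I={}^{t}P\circ P\circ{}^{t}P\circ P$; these are different operations. Consequently your entire geometric picture of ``pairs $(s,u)$ joined by a connecting conic $c_t$'' describes the wrong object, and the interpretation of $a$ as ``the generic multiplicity with which a residual pair arises from the two-step incidence'' has no content here. The actual mechanism is much simpler: on the open stratum $I_0$, the regular-embedding result (Lemma~\ref{L2.5}) gives $I_0^2=c_2(N_{I_0/F\times F})$, and one computes this normal bundle by lifting to $\tilde I_0\subset P\times P$, where $N_{\tilde I_0/P\times P}=q_0^*T_X$. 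Once the open contribution is identified as $(I\cdot A+B)|_{\text{open}}$, the terms $aW$ and $C$ drop out of the localization sequence for $\Sigma_1\cup\Sigma_2\cup W$, with $\Sigma_1$ too small to contribute in the right dimension.

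The second gap is in your claim that $A,B\in \pr_1^*\CH^*(F)\cdot\pr_2^*\CH^*(F)$ is ``standard'' or follows from the cubic-fourfold analogue. It does not. The Chern-class computation produces terms involving $q_0^*H$ and $q_0^*K$ restricted to $\tilde I_0$, i.e.\ classes of the form $\pr_i^*H\cdot\Delta_X$ and $\pr_i^*K\cdot\Delta_X$ pushed to $F\times F$. To show these lie in $\pr_1^*\CH^*(X)\cdot\pr_2^*\CH^*(X)$ one needs two genuine inputs: (i) the Grassmannian hull $M=\Gr(2,V_5)\cap H$ is a \emph{linear variety} in Totaro's sense, so $\CH^*(M\times X)=\pr_1^*\CH^*(M)\cdot\pr_2^*\CH^*(X)$, whence $j_1^*j_{1*}\Delta_X=2\pr_1^*H\cdot\Delta_X$ decomposes; and (ii) the class $K$ is represented by a Del Pezzo surface $S_{V_4}$, whose diagonal decomposes because $\rH^1(S,\cO_S)=0$. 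Neither step is available for free from the cubic case, where the ambient $\PP^5$ makes everything immediate. Without these, your ``product'' claim for $A,B$ is unsupported.
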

\begin{proof}
By Lemma \ref{L2.5}, $I_0\subset F\times F\setminus (\Sigma_1\cup\Sigma_2\cup W)$ is a regular embedding and the restriction $p_0$ of $p$ from $\tilde{I_0}\subset P\times P$ to $I_0\subset F\times F$ is an isomorphism. Then, $$I_0^2=c_2(N_{I_0/F\times F})$$ and 
\begin{equation*}
    0\rightarrow pr_1^*T_{P/F}\oplus pr_2^*T_{P/F} \rightarrow N_{\tilde{I}_0/P\times P}\rightarrow p_0^*(N_{I_0/F\times F})\rightarrow 0.
\end{equation*}
Let $q_0$ be the map $q_0: \tilde{I_0}\rightarrow \Delta_X=X$ which maps two conics in $\tilde{I_0}$ to their common point. Since $\tilde{I}=(q\times q)^{-1}\Delta_X$, we have 
\begin{equation*}
    N_{\tilde{I}_0/P\times P}= q_0^*(T_X)= pr_1^*(T_X)\cdot \tI_0=pr_2^*(T_X)\cdot \tI_0
\end{equation*}
and
\begin{equation}\label{normalbundle}
    c(p_0^*(N_{I_0/F\times F}))=\frac{q_0^*c(T_X)}{pr_1^*c(T_{P/F})\cdot pr_2^*c(T_{P/F})}.
\end{equation}
Let $K=\sigma_{1,1}|_X$ be the class of $S_{V_4}$ and $H=\sigma_1|_X$ is the hyperplane class.  We claim that $c_2(p_0^*(N_{I_0/F\times F}))$ is a degree 2 polynomial with variables in $pr_1^*\CH^*(F)\cdot pr_2^*\CH^*(F)$ and $pr_1^*\langle H,K\rangle\cdot pr_2^*\langle H,K\rangle$ restricting to $\tI_0$.  The following  computations is standard:
\begin{enumerate}
    \item  The Chern classes of $X$  
           \begin{equation*} 
              c(T_X)=\frac{c(T_{\Gr(2,V_5)})|_X}{(1+H)(1+2H)}
          ,\end{equation*}
          where $c(T_{\Gr(2,V_5)})$ is a polynomial in Schubert cycles. Therefore, the degree $1$ and $2$ parts of $q_0^*c(T_X)$ are polynomials in  $H$ and $\sigma_{1,1}$  via the two projections and restriction to $\tI_0$.
    \item The Chern classes of the relative tangent bundle $T_{P|F}$
           \begin{equation*}
              c(T_{P|F})=\frac{c(T_{\PP/F})|_P}{c(N/\PP)}= \frac{c(p^*(E)\otimes\cO_{\PP}(1))|_P}{c(\cO_P(P))}   
          ,\end{equation*}    
          where $c_1(\cO_{\PP}(1))|_X=H$ and $P=2c_1(\cO_{\PP}(1))+p^*D\in \CH^1(\PP)$ for some $D\in \CH^1(F)$. So the degree $1$ and degree $2$ parts are polynomials in $H$ and $\CH^*(F)$.    
\end{enumerate}
The claim follows from the computations and \eqref{normalbundle}. 
Next, the terms in $p_0^*c_2(N_{I_0/F\times F})$ are of the following three types:
\begin{enumerate}
    \item $D\cdot \tI_0$, where $D\in pr_1^*\CH^*(F)\cdot pr_2^*\CH^*(F)$.
    \item $(q\times q)^*pr_i^*H\cdot D\cdot \tI_0$ for $i=1$ or $2$, where $D$ is of form $(q\times q)^*pr_j^*H$ or $(p\times p)^*pr_j^*D_1$ for some $D_1\in \CH^1(F)$.
    \item $(q\times q)^*pr_i^*K\cdot \tI_0$ for $i=1$ or $2$.
\end{enumerate}
Applying $p_{0*}$ to $p_0^*c_2(N_{I_0/F\times F})$, the terms of the first type are of the form $I\cdot A$ by projection formula. For the other two types, we use $\tI=(q\times q)^*\Delta_X$ and it's enough to show that $pr_i^*H\cdot pr_j^*H\cdot\Delta_X$ and $pr_i^*K\cdot\Delta_X$ are in $pr_1^*\CH^*(X)\cdot pr_2^*\CH^*(X)$. Denote by $M=\Gr(2,V_5)\cap H$  the Grassmanian hull of $X$ and consider
\begin{equation*}
    j_1:X\times X\hookrightarrow M\times X.
\end{equation*}
We know that $M$ is a linear variety (Schubert varieties are linear in the sense of \cite{totaro2014chow}, since they are stratified by Schubert cells) and therefore
by \cite[Proposition 1]{totaro2014chow}, $$\CH^*(M\times X)=pr_1^*\CH^*(M)\cdot pr_2^*\CH^*(X).$$ Hence we have 
\begin{equation*}
    j_1^*j_{1*}\Delta_X\in pr_1^*\CH^*(X)\cdot pr_2^*\CH^*(X)
\end{equation*}
Since $X$ is of class $2H$ in $M$, we have $ j_1^*j_{1*}\Delta_X=2pr_1^*H\cdot \Delta_X$. Hence $pr_1^*H\cdot\Delta_X\in pr_1^*\CH^*(X)\cdot pr_2^*\CH^*(X)$.

For the third type, we recall that the class $K\in \CH^2(X)$ is represented by a Del Pezzo surface $S=S_{V_4}$ and  $pr_i^*K\cdot\Delta_X$ is the push-forward of the diagonal class $\Delta_S$ in $\CH^2(S\times S)$.  Then by the decomposition of diagonal, we see that $$\Delta_S=\alpha\times S+\Gamma,$$ where $\alpha\in \CH_0(S)$ and $\Gamma$ is supported on $S\times V$ for some proper subvariety $V\subset S$. If $\dim V=0$, $\Gamma$ is of the form $S\times \beta$ for some $\beta\in \CH_0(X)$. If $\dim V=1$, then $\Gamma\in \CH^1(V\times X)$. We see that $\CH^1(V\times X)=pr_1^*\CH^1(V)\cdot pr_2^*\CH^1(S)$ since $\rH^1(S,\cO_S)=0$. In both cases, we obtain: 
$$\Delta_S\in pr_1^*\CH^*(S)\cdot pr_2^*\CH^*(S),$$  which implies that $pr_i^*K\cdot\Delta_X$ are in $pr_1^*\CH^*(X)\cdot pr_2^*\CH^*(X)$.

Finally, we get the conclusion by the localization exact sequence for Chow groups.

\end{proof}
\subsection{\texorpdfstring{$Z$}{Y} is a constant cycle surface }

Now we can prove that $Z$ is a constant cycle surface on $\tY^\vee$, and a point on $Z$ represents the distinguished 0-cycle $o$ that appeared in \eqref{weak splitting}. First, we notice that $$\Phi([c])=I_*([c])=D_c$$ since $q$ is flat, where we recall that $\Phi=p_*q^*$. Hence the class of $D_c$ depends only on the class $[c]\in \CH_1(X)$. 

For a general $V_4$, $S_{V_4}$ is isomorphic to the blow-up of $\PP^2$ at five points. Let $H$ be the pullback of the hyperplane class on $\PP^2$ and $E_i$, $i=1,\dots 5$, be the five exceptional curves. There are $10$ pencils of conics on $S_{V_4}$, which are $H-E_i$ and $2H-\sum_{j\neq i}E_j$. Denote by $L_i,~i=1,\dots 10$ the corresponding lines on $F$ and let $L_{i+10}=L_{i}$. Recall that $L_i$'s are contracted to a point by $\alpha$ and $\alpha(L_i)=\iota(\alpha(L_{i+5}))$. By a straightforward computation in $S_{V_4}$, we have \begin{equation}\label{conicindelpezzo}
     c_i\cdot c_j=1 ~{\rm for}~ j\neq i,i+5;  c_i\cdot c_{i+5}=2 ~{\rm and}~ c_i^2=0 ~{\rm for ~any}~ i.
\end{equation} 
Here, $c_i$ is a conic in $L_i$.
That is, conics in $L_{i+5}$ meet $c_i$ at two points and  conics in $L_j,j\neq i,i+5$ meet $c_i$ at one point.    
We have the following computation:
\begin{lemma}\label{KeyLemma}
Let $c_i$ be a conic on a general $S_{V_4}$ and $c_i\in L_i$, then 
\begin{equation}
    D_{c_i}^2=\sum_{j\neq i,j\neq i+5} L_j+4L_{i+5}+kL_{\sigma} \in \CH_1(F),
\end{equation}
where $L_{\sigma}\subset F$ is a curve contained in the locus of $\sigma$-conics.
\end{lemma}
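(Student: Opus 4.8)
The plan is to compute $D_{c_i}^2$ by exploiting the self-intersection formula of Proposition \ref{formula} together with the intersection numbers \eqref{conicindelpezzo} on the Del Pezzo surface $S_{V_4}$. First I would recall that $D_{c_i} = \Phi([c_i]) = I_*([c_i])$, so the class $D_{c_i}$ depends only on $[c_i] \in \CH_1(X)$, and that we may use any convenient representative conic in the pencil $L_i$. The core idea is that $D_{c_i}^2$ is the restriction to the fiber $\{c_i\} \times F$ of the correspondence $I^2$, i.e. $D_{c_i}^2 = (I^2)_*([c_i])$, so I would feed the decomposition $I^2 = aW + I\cdot A + B + C$ from Proposition \ref{formula} into this and analyze the contribution of each term separately. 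The terms $I\cdot A$ and $B$ lie in $pr_1^*\CH^*(F)\cdot pr_2^*\CH^*(F)$, and since $[c_i]$ is a constant cycle on $Z$ (Lemma \ref{2.1}) and $\CH_1(X)_\QQ$ is generated by conics (Proposition \ref{ch}), their pushforwards will land in the span of the contracted lines $L_j$ and the $\sigma$-locus; the term $C$ supported on $\Sigma_2$ will contribute only to the $L_\sigma$ part, which is why a correction term $kL_\sigma$ appears.

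Next I would identify the geometric meaning of each coefficient. The component $4L_{i+5}$ should arise from the locus $W$ of residual pairs: a conic meeting $c_i$ at two points is residual to $c_i$ in $S_{V_4}$, and by \eqref{conicindelpezzo} the conics meeting $c_i$ twice are exactly those in $L_{i+5}$, with intersection multiplicity $2$; squaring the incidence doubles this to account for the two intersection points, giving the coefficient $4 = a$ after identifying the constant $a$ in Proposition \ref{formula}. The coefficient $1$ on each $L_j$ for $j \neq i, i+5$ should come from conics meeting $c_i$ transversally at a single point: locally near such a pencil, $D_{c_i}$ is a divisor in $F$ and $D_{c_i}^2$ picks up each pencil $L_j$ once, matching $c_i \cdot c_j = 1$. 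The most direct way to extract these numbers is to work inside a single $S_{V_4}$: for a fixed general $V_4$, the conics through a point of $c_i$, intersected with the ten pencils, give a concrete count, and the self-intersection $D_{c_i}^2$ restricted to the lines $L_j \subset F$ is governed entirely by the surface intersection theory of \eqref{conicindelpezzo}.

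The main obstacle will be controlling the $\sigma$-conic correction $kL_\sigma$ and ensuring that no other components appear. Concretely, I expect the difficulty to be twofold: first, verifying that the excess-intersection contributions along the bad loci $\Sigma_1, \Sigma_2, W$ do not introduce spurious multiplicities on the $L_j$ with $j \neq i, i+5$ — this is where the regular-embedding statement of Lemma \ref{L2.5} and the careful separation of terms in Proposition \ref{formula} are essential, since it guarantees that over the good locus $I_0$ the excess class is genuinely the top Chern class of a rank-two normal bundle and nothing more. Second, the term $C$ supported on $\Sigma_2$ mixes contributions from $\rho$- and $\sigma$-conics, and isolating the precise multiple $k$ of $L_\sigma$ requires knowing how the universal plane map $F \to \Gr(3,10)$ degenerates along the $\sigma$-locus; fortunately, since all $\sigma$-conics (resp. $\rho$-conics) are a single class $\sigma$ (resp. $\rho$) in $\CH_1(X)$ and are contracted to a point by $\alpha$, I only need to determine a single integer $k$, which can be pinned down by intersecting both sides with an auxiliary divisor (for instance, a polarization pulled back from $\tY^\vee$) and comparing degrees. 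I would not attempt to grind out the Chern-class polynomial of \eqref{normalbundle} explicitly; instead I would argue that its relevant part is forced, by the projection-formula bookkeeping of Proposition \ref{formula} and the Del Pezzo intersection table, to take exactly the stated form.
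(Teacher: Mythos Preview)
Your approach has a genuine gap: you are trying to extract the \emph{exact} coefficients $1$ and $4$ from Proposition \ref{formula}, but that proposition only tells you the \emph{shape} of $I^2$ --- the classes $A$, $B$, $C$ and the constant $a$ are never determined there. If $B=\sum pr_1^*\alpha_k\cdot pr_2^*\beta_k$, then $B_*([c_i])$ is a combination of the $\beta_k$'s, which are arbitrary classes in $\CH^*(F)$; there is no reason they should lie in the span of the ten lines $L_j$ and the $\sigma$-locus, and your appeal to Lemma \ref{2.1} does not help (that lemma says $\Psi(y+\iota(y))$ is constant, not that a general conic $c_i$ represents a point of $Z$). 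So the bookkeeping you sketch cannot pin down the coefficients of the $L_j$. Worse, the logical dependency runs the other way in the paper: Lemma \ref{nonzero} \emph{uses} the precise formula of Lemma \ref{KeyLemma} (via \eqref{inverseformula}) to show $a\neq 0$; you cannot first assume you know $a$ (let alone $a=4$) and then read off the lemma from Proposition \ref{formula}.

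The paper's proof is entirely different and far more direct. It does not touch Proposition \ref{formula} at all. Instead it picks a second conic $c_0\in|c_i|$ in the same pencil, so that $D_{c_i}$ and $D_{c_0}$ are linearly equivalent but set-theoretically distinct, and computes the intersection $D_{c_i}\cap D_{c_0}$ by hand. The support is found geometrically: conics meeting both $c_i$ and $c_0$ either lie in $S_{V_4}$ (hence in some $L_j$, $j\neq i$) or are shown --- by an elementary argument with the plane $\langle c\rangle$ and Lemma \ref{plane} --- to be $\sigma$-conics, giving the curve $L_\sigma$. The multiplicities $1$ and $4$ are then obtained by an honest tangent-space calculation using the normal bundle $N_{c/X}=\cO_c\oplus\cO_c(1)^{\oplus 2}$ and the differential \eqref{differential}: for $j\neq i,i+5$ the tangent spaces $T_cD_{c_i}$ and $T_cD_{c_0}$ meet transversely along $T_cL_j$, while for $j=i+5$ each divisor has a double point at $c$ and the tangent cones meet only along $L_{i+5}$, yielding multiplicity $4$. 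This is the missing idea in your proposal --- replacing the self-intersection by a transverse intersection of two linearly equivalent divisors and computing multiplicities via normal bundles, rather than via the abstract decomposition of $I^2$.
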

\begin{proof}
We may assume that $S_{V_4}$ is general so that $W_1\not\subset V_4$ and $S_{V_4}$ contains no $\sigma$-conics. Take another conic $c_0\in |c_i|$ and thus $D_{c_0}=D_{c_i}$ in $\CH^2(F)$. The intersection $D_{c_i}\cap D_{c_0}$ consists of conics $c$ which meet both $c_i$ and $c_0$. We distinguish between the two cases whether $c$ is contained in $S_{V_4}$ or not. 

If $c\in D_{c_i}\cap D_{c_0}$ and $c\subset S_{V_4}$,  it is contained in another linear system of conics, supported on $L_j$, for some $j\neq i$. Conversely, $L_j\subset {\rm Supp}~ D_{c_i}\cap D_{c_0}$ for $j\neq i$ by \eqref{conicindelpezzo}. 

If $c\in D_{c_i}\cap D_{c_0}$ and $c\not\subset S_{V_4}$, we first show that $c$ must be a $\sigma$-conic.
Let $x\in c\cap c_0$ , $y\in c\cap c_i$ and $V_x,V_y$ be the corresponding 2-dimensional subspaces in $V_5$.  We have $V_x\cup V_y$ is a three-dimensional space $V_3$, otherwise $c$ would be contained in $S_{V_4}$.
It follows that $V_x\cap V_y$ is a one-dimensional space $V_1$. Therefore $$\PP(V_1\bigwedge V_3)\subset \Gr(2,V_5)\cap \langle c\rangle.$$ By Lemma \ref{plane}, $c$ is a $\sigma$-conic or a $\rho$-conic. Since  $W_1\not\subset V_4$, we see that $W_1\not\subset V_3$ and $c$ is not a $\rho$-conic. Therefore $c$ is a $\sigma$-conic.

Recall that $\sigma$-conics are parameterized by $Bl_{[W_1]}(\PP(V_5))$. 
Let $S_{c_0}$ and $S_{c_i}$ be the surfaces swept out by lines parameterized by $c_0$ and $c_i$ in $\PP(V_4)$. There is a natural morphism 
$$S_{c_0}\cap S_{c_i}\rightarrow Bl_{[W_1]}(\PP(V_5)),$$
and the image is exactly the locus of $\sigma$-conics meeting $c_i$ and $c_0$ by the previous discussion.
The intersection $S_{c_0}\cap S_{c_i}$ is clearly one dimensional.  Therefore the conics in  $D_{c_i}\cap D_{c_0}$ not contained in $S_{V_4}$ are parameterized by a curve $L_{\sigma}\subset F$.

Thus $D_{c_i}$ and $D_{c_0}$ intersect dimensionally transversely and it remains to compute the multiplicity. We consider the differential $dq$ as in \eqref{differential}.
Let $c$ be a general smooth conic in $D_{c_i}\cap D_{c_0}$ and $c\in L_j,j\neq i, i+5$.  Then $c$ meets $c_i$ (resp.~$c_0$) at one point $x$ (resp.~$y$). We have $T_x c\subset T_xS_{V_4}$, hence the image of $T_x c$ in $N_{c/X}$ is $N_{c/S_{V_4},x}$.
Then by \eqref{differential},  we have $$T_{(c,x)}D_{c_i}'=dq^{-1}(T_xc_i)=\mathrm{H}^0(N_{c/X}\otimes I_x)\oplus \mathrm{H}^0(N_{c/S_{V_4},x}),$$ which is $3$-dimensional. Thus  $D_{c_i}'$ is smooth at $(c,x)$. Since $c_i$ and $c$ meet at a single point, $D_{c_i}'$ maps isomorphically to $D_{c_i}$ via $p$ around $c$. Hence,
\begin{equation*}
    T_{c}D_{c_i}=\mathrm{H}^0(N_{c/X}\otimes I_x)\oplus \mathrm{H}^0(N_{c/S_{V_4}})\subset T_{c}F.
\end{equation*}
Similarly  for $c_0$, we obtain 
\begin{equation*}
    T_{c}D_{c_0}=\mathrm{H}^0(N_{c/X}\otimes I_y)\oplus \mathrm{H}^0(N_{c/S_{V_4}})\subset T_{c}F.
\end{equation*}
Since $\mathrm{H}^0(N_{c/X}\otimes I_x\otimes I_y)=0$, we have 
\begin{equation*}
    T_{c}D_{c_i}\cap T_{c}D_{c_0}=\mathrm{H}^0(N_{c/S_{V_4}})=T_{c}L_j.
\end{equation*}
That means  $D_{c_i}$ and $D_{c_0}$ meet transversely at $c$, hence the multiplicity of $L_j$ is $1$ for $j\neq i,i+5$.

For a general conic $c\in D_{c_i}\cap D_{c_0}\cap L_{i+5}$, $c$ meet $c_i$ and $c_0$ at two points. Let 
$c\cap c_i=x_1,x_2$ and $c\cap c_0=y_1,y_2$. Then $p: D_{c_i}'\rightarrow D_{c_i}$ (resp. $D_{c_0}'\rightarrow D_{c_0}$) maps the two points $(c,x_1)$ and $(c,x_2)$ (resp. $(c,y_1)$ and $(c,y_2)$ ) to $c$. We obtain that the tangent cones 
\begin{equation}\label{cone}
\begin{split}
        C_{c}D_{c_i} &= \mathrm{H}^0(N_{c/X}\otimes I_{x_1})\oplus \mathrm{H}^0(N_{c/S_{V_4}})\cup \mathrm{H}^0(N_{c/X}\otimes I_{x_2})\oplus \mathrm{H}^0(N_{c/S_{V_4}})\\
        C_{c}D_{c_0} &= \mathrm{H}^0(N_{c/X}\otimes I_{y_1})\oplus \mathrm{H}^0(N_{c/S_{V_4}})\cup \mathrm{H}^0(N_{c/X}\otimes I_{y_2})\oplus \mathrm{H}^0(N_{c/S_{V_4}}).
\end{split}
\end{equation}
Take a general hypersurface $G\subset F$ containing $c$ such that $G$ meets $L_{i+5}$ transversely at $c$ and then the multiplicities $${\rm mult}_{c} D_{c_i}|_G= {\rm mult}_{c} D_{c_0}|_G = 2.$$ 
Combining the fact that the direction of $\mathrm{H}^0(N_{c/S_{V_4}})$ is in the fiber of $\alpha$, we obtain $$C_{c}D_{c_i}|_G\cap C_{c}D_{c_0}|_G=0$$ by \eqref{cone} and the description of the normal bundle of conics.  By \cite[Proposition 1.29]{eisenbud20163264}, the intersection multiplicity of $D_{c_i}|_G$ and $D_{c_i}|_G$ at $c$ is $4$.  It follows that the multiplicity of the intersection of $D_{c_0}$ and $D_{c_i}$ at $L_{i+5}$ is $4$.
\end{proof}
Apply this Lemma to every $c_i\subset S_{V_4}$ and by a linear combination, we have:
\begin{equation}\label{inverseformula}
    6(D_{c_i}^2+D_{c_{i+5}}^2)-\sum_{j=1}^{10}D_{c_j}^2=12(L_i+L_{i+5})+L'  ,
\end{equation}
where $L'$ is a 1-cycle on $F$ consisting of $\sigma$-conics.

\begin{lemma}\label{nonzero}
The constant $a$ in Proposition \ref{formula} is nonzero.  If $n[c_1]\sim n[c_2]$ in $\CH_1(X)$ for some nonzero integer $n$, then $[c_1]\sim[c_2]$ in $\CH_0(\tY^\vee)$.
\end{lemma}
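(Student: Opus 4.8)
The plan is to read Proposition \ref{formula} as an identity of self-correspondences and to extract from it the single geometric term that records the involution. Applying $\gamma\mapsto \mathrm{pr}_{2*}(\mathrm{pr}_1^*\gamma\cdot(-))$ to the class of a point $s\in F$ corresponding to a general $\tau$-conic $c_s$, I first note that $(I^2)_*[s]=D_{c_s}^2$ (this is exactly what the self-intersection computes), while the four terms on the right of Proposition \ref{formula} evaluate as follows: $W_*[s]$ is the residual pencil of $c_s$, that is, the $\alpha$-fibre over $\iota(\alpha(s))$; writing $A,B=\sum \mathrm{pr}_1^*\mu_k\,\mathrm{pr}_2^*\nu_k$, only the summands with $\mu_k\in\CH^0$ survive after intersecting with the point $[s]$, so $(I\cdot A)_*[s]=D_{c_s}\cdot A'$ and $B_*[s]=B'$ for fixed classes $A',B'$; and $C_*[s]=0$ since $C$ is supported on $\Sigma_2$ while $s$ is a $\tau$-conic. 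This yields the key identity
\[
D_{c_s}^2=a\,[\alpha^{-1}(\iota\alpha(s))]+D_{c_s}\cdot A'+B'\qquad\text{in }\CH_1(F),
\]
in which, because $D_{c_s}=\Phi([c_s])$ depends only on $[c_s]\in\CH_1(X)$ and $\Phi$ is linear, everything on the right except the residual-fibre term $a\,[\alpha^{-1}(\iota\alpha(s))]$ depends only on $[c_s]\in\CH_1(X)$.

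Granting $a\neq 0$, the second assertion follows at once. Suppose $n[c_1]=n[c_2]$ in $\CH_1(X)$; then $n(D_{c_1}-D_{c_2})=0$ in $\CH^2(F)$, and subtracting the two key identities and multiplying by $n$ makes both $n(D_{c_1}^2-D_{c_2}^2)=\big(n(D_{c_1}-D_{c_2})\big)(D_{c_1}+D_{c_2})$ and $\big(n(D_{c_1}-D_{c_2})\big)\cdot A'$ vanish, so that
\[
na\big([\alpha^{-1}(\iota\alpha(c_1))]-[\alpha^{-1}(\iota\alpha(c_2))]\big)=0\in\CH_1(F).
\]
I then choose an ample divisor $\Gamma$ on $F$, which restricts to a $d$-section of $\alpha$ with $d>0$, and apply $\alpha_*(\Gamma\cdot-)$: since a general fibre is $\PP^1$ and $\Gamma$ meets it in $d$ points all lying over the same point, $\alpha_*(\Gamma\cdot[\alpha^{-1}(y)])=d\,y$, whence $nad\,(\iota\alpha(c_1)-\iota\alpha(c_2))=0$ in $\CH_0(\tY^\vee)$. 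As $\tY^\vee$ is an IHS fourfold, $\mathrm{H}^1(\tY^\vee,\cO)=0$, so $\CH_0(\tY^\vee)$ is torsion-free by Roitman's theorem; since $nad\neq 0$ this forces $\iota\alpha(c_1)=\iota\alpha(c_2)$, and applying $\iota_*$ gives $\alpha(c_1)=\alpha(c_2)$ in $\CH_0(\tY^\vee)$. It suffices to treat $\tau$-conics, the $\rho$- and $\sigma$-conics all mapping to the two fixed points $y_1,y_2$.

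It remains to prove $a\neq 0$, which I do by contradiction. If $a=0$, the key identity shows $D_c^2$ depends only on $[c]\in\CH_1(X)$. Substituting into \eqref{inverseformula} and using that $[c_i]+[c_{i+5}]$ equals the fixed class $\eta=[-K_{S_{V_4}}]=H\cdot K$—fixed because all sub-Grassmannians $\Gr(2,V_4)\subset\Gr(2,V_5)$ are rationally equivalent, so $[S_{V_4}]=K$ is independent of $V_4$—one finds that $12\big([\alpha^{-1}(\alpha(c_i))]+[\alpha^{-1}(\iota\alpha(c_i))]\big)$ differs from a universal class only by the cycle $L'$ of $\sigma$-conics. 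Applying $\alpha_*(\Gamma\cdot-)$, whose effect on $L'$ is a multiple of the two special points $y_1,y_2$, shows that $12d\,(y+\iota y)$ is constant modulo $\ZZ y_1\oplus\ZZ y_2$ as $y=\alpha(c_i)$ ranges over a dense subset of $\tY^\vee$. A second application of $(\mathrm{id}+\iota)_*$, which kills $y_1-y_2=y_1-\iota y_1$, together with torsion-freeness removes this ambiguity and yields $(\mathrm{id}+\iota)_*=0$ on $\CH_0(\tY^\vee)_{\mathrm{hom}}$. By the Bloch–Srinivas principle a correspondence acting as $0$ on $\CH_0(\tY^\vee)_{\mathrm{hom}}$ acts as $0$ on $\mathrm{H}^{k,0}$, so $\iota^*=-\mathrm{id}$ on $\mathrm{H}^{4,0}(\tY^\vee)$. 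This contradicts the anti-symplectic nature of $\iota$, for which $\iota^*=+\mathrm{id}$ on $\mathrm{H}^{4,0}=\mathrm{Sym}^2\mathrm{H}^{2,0}\neq 0$. Hence $a\neq 0$.

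The main obstacle is this last step: isolating the residual-fibre term and proving its coefficient cannot vanish. The delicate points are the independence of $\eta$ from $V_4$ (needed so that \eqref{inverseformula} produces one universal class across all Del Pezzo sections) and the bookkeeping of the $\sigma$-conic contributions, absorbed by the second application of $(\mathrm{id}+\iota)_*$; the Hodge-theoretic input through Bloch–Srinivas and the anti-symplectic involution then closes the argument.
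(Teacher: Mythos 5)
Your proof is correct and follows essentially the same route as the paper's: both extract from Proposition \ref{formula} that $D_c^2$ equals $a\,W_*[c]$ plus terms depending only on $[c]\in\CH_1(X)$, combine this with \eqref{inverseformula} and the constancy of residual-conic sums to show that $a=0$ would force $y+\iota(y)$ to be constant in $\CH_0(\tY^\vee)$, and then derive the same Hodge-theoretic contradiction from $\sigma^2\neq 0$ (the paper via Mumford's theorem applied to the graph $\Delta'$, you via Bloch--Srinivas and $\iota^*$ on $\mathrm{H}^{4,0}$), with the second assertion proved in both cases by intersecting with an ample divisor and invoking torsion-freeness of $\CH_0(\tY^\vee)$. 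If anything, your bookkeeping of the $\sigma$-conic term $L'$ and of the integer multiples is more explicit than the paper's.
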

\begin{proof}
We assume on the contrary that $a=0$. Then $$I^2=I\cdot A+B+C\in \CH^4(F\times F),$$ 
where $A,B,C$ are as in Proposition \ref{formula}. 
A correspondence $\Gamma$ of codimension larger than $0$ in $pr_1^*\CH^*(F)\cdot pr_2^*\CH^*(F)$  is of the form $[Z_1\times Z_2]$, where either $Z_1$ or $Z_2$ is a proper subvariety of $F$. In both cases, $\Gamma$ induces a constant map  $\CH_0(F)\rightarrow \CH^*(F)$. Further, by Lemma \ref{L2.3}, $I=$ $^{t}P \circ P$ and hence $I_*: \CH_0(F)\rightarrow \CH_3(F)$ factor through $\CH_1(X)$. Together with the torsion-freeness of $\CH_0(F)$, it implies that if $a=0$, then for $\xi,\zeta \in \CH_0(F)$, 
\begin{equation}\label{contrary}
    I^2_*(\xi)=I^2_*(\zeta)
\end{equation}
as long as $q_*p^*(n\xi)=q_*p^*(n\zeta)\in \CH_1(X)$, where $p,q$ are the projections from the universal conic to $F$ and $X$. 

According to \cite[Lemma 17.3]{shen2016fourier}, we know that $I^2_*([c])=D_c^2$. Then \eqref{contrary} together with Lemma \ref{2.1} imply that $D_c^2+D_{\iota(c)}^2$ are constant in $\CH_1(F)$ when varying $c\in F$. 
We want to deduce a contradiction between the formula \eqref{inverseformula} and the constantness of $D_c^2+D_{\iota(c)}^2$.

Take a very ample divisor $G\subset F$.  Intersecting both sides of formula \eqref{inverseformula} with $G$ and pushing forward to $\tY^\vee$, then the constantness of $D_c^2+D_{\iota(c)}^2$ implies that $([c]+\iota([c]))$ is constant in $\CH_0(\tY^\vee)$, i.e., points on $$\Delta'=\{(y,\iota(y))~|~y\in \tY^\vee\}\subset \tY^\vee\times \tY^\vee$$ are constant in $\CH_0(\tY^\vee)$. 

Then for any power $\sigma^l$ of the two-form $\sigma$ on $\tY^\vee$, we have $(pr_1^*(\sigma^l)+pr_2^*(\sigma^l))|_{\Delta'}=0$ by Mumford's theorem, see  \cite[Proposition 10.24]{voisin2003hodge}. Then we obtain $$2pr_1^*(\sigma^2)|_{\Delta'}=0,$$ but it is impossible since $pr_1$ is an isomorphism between $\Delta'$ and $\tY^\vee$.    Therefore, $a$ is nonzero.

For the second part, assume that $n[c_1]\sim n[c_2]$ and again take a very ample divisor $G\subset F$. Let $m$ be the intersection number of $G$ and a general fiber of $\alpha:F\rightarrow \tY^\vee$. 
Then we have $n^2I^2_*([c_1])=n^2D_{c_1}^2=n^2D_{c_2}^2=n^2I^2_*([c_2])$ again by \cite[Lemma 17.3]{shen2016fourier} and $nI_*([c_1])=nI_*([c_2])$. Hence due to Proposition \ref{formula} , 
\begin{equation*}
    \begin{split}
        0=&G\cdot n^2(I^2_*([c_1])-I^2_*([c_2])) \\=&G\cdot n^2(aW+I\cdot A+B+C)([c_1]-[c_2]) \\=&G\cdot an^2W([c_1]-[c_2])\\=&amn^2\cdot (\iota([c_1])-\iota([c_2])),
    \end{split}
\end{equation*}
here we view the cycles in $\CH^4(F\times F)$ as morphisms from $\CH_0(F)$ to $\CH_1(F)$. Then $$\iota([c_1])\sim \iota([c_2])$$ again by the torsion-freeness of $\CH_0(\tY^\vee)$ and the fact that $a\neq 0$. Since $\iota$ is an involution, the result follows.

\end{proof}

Combining Lemma \ref{2.1} and Lemma \ref{nonzero}, we can get the following result :
\begin{theorem}\label{constant}
The invariant locus $Z$ of the involution is a constant cycle surface on $\tY^\vee$.
\end{theorem}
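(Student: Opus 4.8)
The plan is to deduce the theorem formally from Lemma \ref{2.1} and Lemma \ref{nonzero}, using the homomorphism $\Psi\colon \CH_0(\tY^\vee)\to\CH_1(X)$ as a bridge between zero-cycles on $\tY^\vee$ and one-cycles on $X$. Recall that $\Psi$ sends $\alpha(c)$ to $[c]\in\CH_1(X)$; in particular it is additive, so for any $y\in\tY^\vee$ and a conic $c$ with $\alpha(c)=y$ one has $\Psi(2y)=2[c]$. The goal is to show that any two points $y_1,y_2\in Z$ satisfy $y_1=y_2$ in $\CH_0(\tY^\vee)$.

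First I would fix $y_1,y_2\in Z$ and choose conics $c_1,c_2$ on $X$ with $\alpha(c_i)=y_i$. Since $Z$ is the fixed locus of $\iota$, we have $\iota(y_i)=y_i$, so Lemma \ref{2.1} applies and gives that $\Psi(2y_i)=\Psi(y_i+\iota(y_i))$ is independent of the point $y_i\in Z$. Consequently $\Psi(2y_1)=\Psi(2y_2)$ in $\CH_1(X)$, which by additivity of $\Psi$ reads $2[c_1]=2[c_2]$, that is $2[c_1]\sim 2[c_2]$ in $\CH_1(X)$.

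Now I would invoke Lemma \ref{nonzero} with $n=2$: the relation $2[c_1]\sim 2[c_2]$ in $\CH_1(X)$ forces $[c_1]\sim[c_2]$ in $\CH_0(\tY^\vee)$, that is $y_1=\alpha(c_1)\sim\alpha(c_2)=y_2$. Since $y_1,y_2\in Z$ were arbitrary, all points of $Z$ have the same class in $\CH_0(\tY^\vee)$, so $Z$ is a constant cycle surface.

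The real work has already been carried out in the two lemmas, so the only subtlety I anticipate is bookkeeping between the two uses of the bracket $[c]$---as a class in $\CH_1(X)$ in the hypothesis of Lemma \ref{nonzero} and as $\alpha(c)\in\CH_0(\tY^\vee)$ in its conclusion---together with the passage from general points, where Lemma \ref{2.1} is proved, to all points of $Z$. The latter is handled by the standard fact that over an uncountable field the locus of points rationally equivalent to a fixed point is a countable union of closed subvarieties, so constancy on a dense subset of the irreducible surface $Z$ propagates to all of $Z$. The essential input, namely $a\neq 0$ in Proposition \ref{formula}, is precisely what makes Lemma \ref{nonzero} available, and is the place where the geometry (the self-intersection computation and Mumford's theorem) is genuinely used; everything after that is formal.
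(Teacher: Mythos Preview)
Your proposal is correct and follows exactly the paper's approach: the paper itself simply states that the theorem follows by combining Lemma \ref{2.1} and Lemma \ref{nonzero}, and you have spelled out this combination precisely. The density argument you worry about at the end is unnecessary, since Lemma \ref{2.1} is already stated (and proved, via the moving lemma for $0$-cycles) for \emph{all} $y\in\tY^\vee$, not just general ones.
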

\iffalse
\begin{proof}
For a general GM fourfold, the assertion follows from Lemma \ref{2.1} and Lemma \ref{nonzero}. It remains to run a specialization argument.

If $\tY^\vee_0$ is any dual double EPW sextic, we choose a family of dual double EPW sextics and denote by $\cZ$ the invariant locus of the involution
$$\cZ\hookrightarrow\tilde{\cY}\xrightarrow{p} T,$$ 
such that for a general point $t\in T$, $Z_t$ is a constant cycle surface in $\tY^\vee_t$ and the fiber over $0\in T$ is the original $\tY^\vee_0$.

\end{proof}
\fi

Thus by \eqref{weak splitting}, $h^2\cdot Z=40o$ in $\CH_0(\tY^\vee)$. That means $o$ is represented by a point on the constant cycle surface $Z$. We show that $o$ is in fact the class represented by a point on any constant cycle surface:
\begin{proposition}\label{o}
For a very general $\tY^\vee$, if $Z'\subset \tY^\vee$ is another constant cycle surface, then for any point $z'\in Z'$, $z'$ is rationally equivalent to $o$.
\end{proposition}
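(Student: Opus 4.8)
The plan is to reduce everything to showing that $Z'$ must meet $Z$, and then to conclude using that points of $Z$ are rationally equivalent to $o$. First I would record the latter: since $Z$ is a constant cycle surface (Theorem \ref{constant}) and $h^2\cdot Z = 40 o$ by \eqref{weak splitting}, all points of $Z$ carry the same class $\zeta\in\CH_0(\tY^\vee)$, and $40\zeta = 40o$ forces $\zeta = o$ by torsion-freeness of $\CH_0(\tY^\vee)$ (Roitman). Granting a point $w\in Z\cap Z'$, the proof finishes at once: $w\in Z$ gives $w\sim o$, while $w\in Z'$ together with the constant cycle property of $Z'$ gives $z'\sim w\sim o$ for every $z'\in Z'$. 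So the entire content is the nonemptiness of $Z\cap Z'$.

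To force $Z\cap Z'\neq\emptyset$ I would show that the intersection number $[Z]\cdot[Z']$ in $H^8(\tY^\vee)$ is strictly positive, since two closed subvarieties of complementary dimension with nonzero intersection number cannot be disjoint. The key structural input is that a constant cycle surface is Lagrangian: by Mumford's theorem, in the form already used for Lemma \ref{nonzero} (see \cite[Proposition 10.24]{voisin2003hodge}), the holomorphic symplectic form restricts to zero on any constant cycle surface, so $\sigma|_{Z'}=0$ and hence $\int_{\tY^\vee}[Z']\cdot\sigma\bar\sigma=\int_{Z'}(\sigma\bar\sigma)|_{Z'}=0$; the same identity holds for $Z$.

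Next I would exploit that $\tY^\vee$ is very general to pin down the cohomology class of $Z'$. For a very general double EPW sextic the rational Hodge classes in $H^4(\tY^\vee)$ form a rank-two space, spanned by $h^2$ and $Z$ (equivalently $h^2$ and $c_2$, via \eqref{intersection number}); since $[Z']$ is a Hodge class I may write $[Z']=\lambda h^2+\mu Z$ in $H^4(\tY^\vee,\QQ)$. Pairing with $\sigma\bar\sigma$ and using $\int_{\tY^\vee} Z\,\sigma\bar\sigma=0$ (because $Z$ is Lagrangian) together with $\int_{\tY^\vee} h^2\sigma\bar\sigma=q(h)\,q(\sigma,\bar\sigma)>0$ (Fujiki, with $q(h)=2$ for a double EPW sextic and $q(\sigma,\bar\sigma)>0$), the Lagrangian relation $\int_{\tY^\vee}[Z']\sigma\bar\sigma=0$ forces $\lambda=0$. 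Thus $[Z']=\mu Z$ cohomologically, and pairing with $h^2$ gives $\mu=(h^2\cdot Z')/40>0$, since $Z'$ is an effective surface and $h$ is ample. Therefore $[Z]\cdot[Z']=\mu\,Z^2=192\mu>0$, so $Z\cap Z'\neq\emptyset$, and the argument of the first paragraph then yields $z'\sim o$ for all $z'\in Z'$.

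The main obstacle, and the only place where the hypothesis \emph{very general} is genuinely used, is the claim that the rational Hodge classes in $H^4(\tY^\vee)$ reduce to the rank-two lattice $\langle h^2,Z\rangle$, with no further transcendental Hodge classes. I would justify this from the simplicity of the transcendental Hodge structure on $H^2$ of a very general double EPW sextic (Picard rank one), which via $\mathrm{Sym}^2 H^2\to H^4$ leaves $h^2$ and the Beauville--Bogomolov class as the only rational $(2,2)$-classes, the latter being proportional to $c_2$ modulo $h^2$. Everything else in the plan is a formal consequence of this input, the Lagrangian property of constant cycle surfaces, and the intersection numbers in \eqref{intersection number}.
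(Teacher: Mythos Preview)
Your proposal is correct. Both your argument and the paper's reduce the proposition to showing $Z\cap Z'\neq\emptyset$, and both use that for very general $\tY^\vee$ the space of numerical (equivalently rational Hodge) $2$-cycle classes is two-dimensional. The routes to positivity of $[Z]\cdot[Z']$ differ, however. The paper argues cone-theoretically, following Ottem: it shows $\overline{Eff}_2(\tY^\vee)\subset\langle\mathbb{R}_{\ge 0}Z,\mathbb{R}_{\ge 0}c_2\rangle$ and checks by the numbers in \eqref{intersection number} that this cone lies strictly inside its own dual, so every effective surface is strictly nef and hence any two effective surfaces meet. Your approach instead exploits the specific hypothesis on $Z'$: since a constant cycle surface is Lagrangian (Mumford), pairing $[Z']=\lambda h^2+\mu Z$ with $\sigma\bar\sigma$ kills $\lambda$ and forces $[Z']=\mu[Z]$ with $\mu>0$, whence $Z\cdot Z'=192\mu>0$. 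The paper's argument is stronger in that it shows \emph{any} effective surface meets $Z$, not only constant cycle ones; yours is more self-contained, avoiding the nef/effective cone analysis and using only the Lagrangian constraint together with Fujiki's relation. One small point worth making explicit in your write-up: if $Z'$ is singular, interpret $\int_{Z'}\sigma\bar\sigma$ via a resolution $\widetilde{Z'}\to Z'$, on which the pullback of $\sigma$ vanishes by Mumford applied to the composite $\widetilde{Z'}\to\tY^\vee$.
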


\begin{proof}
It suffices to show that for a very general $\tY^\vee$, any two surfaces on $\tY^\vee$ intersect. We obtain this by showing that
every surface on $\tY^\vee$ is strictly nef, i.e., having a positive intersection number with any non-zero effective $2$-cycle.

Let $N_2(\tY^\vee)$ be the space of  $2$-cycles modulo numerical equivalence with $\mathbb{R}$-coefficients, which is a $2$-dimensional vector space spanned by $h^2$ and $Z$.
As in \cite{ottem2015nef}, the Lagrangian surface $Z$ is in the boundary of the effective cone $\overline{Eff}_2(\tY^\vee)$, and $c_2(\tY^\vee)$ is not contained in the interior of the effective cone. Thus 
$$\overline{Eff}_2(\tY^\vee) \subset \langle \mathbb{R}_{\geq 0}(Z), \mathbb{R}_{\geq 0}(c_2)\rangle.$$
Then by \eqref{intersection number}, we have 
\begin{equation*}
\begin{split}
      \langle \mathbb{R}_{\geq 0}(Z), \mathbb{R}_{\geq 0}(c_2)\rangle&\subset\langle \mathbb{R}_{\geq 0}(24h^2-5Z), \mathbb{R}_{\geq 0}(-2h^2+5Z)\rangle\\
      &= (\langle \mathbb{R}_{\geq 0}(Z), \mathbb{R}_{\geq 0}(c_2)\rangle)^{\vee}\\
      &\subset Nef_2(\tY^\vee).
 \end{split}
 \end{equation*}
The first inclusion is such that the cones have no boundary in common by a straightforward computation. Hence every surface on $\tY^\vee$ is strictly nef.
 \end{proof}
 
\section{The filtration on \texorpdfstring{$\text{CH}_1(X)$}{CH\_1(X)}}

\subsection{Basics of the filtration}
A uniruled divisor on a $2n$-dimensional IHS variety $M$ is a divisor $D$ which admits a rational map to a $(2n-2)$-dimensional variety $B$:
\begin{equation*}
    \begin{tikzcd}
D \arrow[r, hook] \arrow[d, "q"', dashed] & M, \\
B                                         &  
\end{tikzcd}
\end{equation*}
and the general fibers of $q$ are rational curves. By \cite{charles2019families}, there exists a uniruled divisor on an IHS variety of $K3^{[n]}$-type with $n\leq 7$, in particular, on the dual double EPW sextic $\tY^\vee$.

The Beauville–Voisin filtration on $\tY^\vee$ is determined by uniruled divisors and constant cycle surfaces. Points on uniruled divisors serve as the $1$st piece and points on constant cycle surfaces serve as the $0$th piece. According to \cite[Lemma 1.1]{shen2020categories} and Proposition \ref{o}, the filtration does not depend on the choice of a uniruled divisor and a constant cycle surface on a very general dual double EPW sextic. 

We define a filtration on $\CH_1(X)$ for a general GM fourfold induced by the Beauville–Voisin filtration on $\tY^\vee$. In the rest of the paper, we study some properties of the filtration.
\begin{definition}
The $i$th piece of the increasing filtration $\rS_\bullet X\subset \CH_1(X)$  consists of $z\in \CH_1(X)$, such that (i.e., the class of $az$ can be represented by a sum of multiples of $i$ conics in a uniruled divisor and $\theta$) 
\begin{equation*}
    az=a_1c_1+a_2c_2\ldots +a_ic_i+a_0\theta
\end{equation*}
for some integer $a,a_j$, where $c_j$'s are conics with $\alpha(c_j)$ in a uniruled divisor $D\subset \tY^\vee $ and $\theta :=\Psi(o)$ for $o$ the class of a point on the constant cycle surface $Z$.
\end{definition}

The EPW sextics associated with period partners $X$ and $X'$ are the same. We have an immediate corollary according to the definition:
\begin{corollary}\label{partner}
For any $t\in \CH_0(\tY^\vee)$, $\Psi(t)\in \rS_i(X)$ iff $\Psi'_*(t)\in \rS_i(X')$, where $\Psi':\CH_0(\tY^\vee)\rightarrow \CH_1(X')$ is similarly defined as $\Psi$.
\end{corollary}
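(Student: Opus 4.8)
The plan is to rephrase the membership $\Psi(t)\in\rS_i(X)$ as a condition on $t$ that is intrinsic to the pair $(\tY^\vee,\iota)$, and hence blind to the choice of GM fourfold in the fibre $\pi^{-1}(A^\vee)$. Since $\alpha$ is a birational $\PP^1$-fibration, for a point $y\in\tY^\vee$ we have $\Psi(y)=[c]$ for any conic $c$ with $\alpha(c)=y$, and $\theta=\Psi(o)$. Thus the defining relation $az=a_1c_1+\cdots+a_ic_i+a_0\theta$ with $\alpha(c_j)\in D$ is exactly $a\Psi(t)=\Psi(w)$, where $w=a_1\alpha(c_1)+\cdots+a_i\alpha(c_i)+a_0o$ ranges over the set $T_i\subset\CH_0(\tY^\vee)$ of $\ZZ$-combinations of $i$ points of the uniruled divisor $D$ together with a multiple of $o$. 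Consequently
\[
\Psi(t)\in\rS_i(X)\iff at\in T_i+\ker\Psi\ \text{ for some }a\neq 0 .
\]
The set $T_i$ depends only on $D$ and $o$ inside $\tY^\vee$, and by the discussion above (via \cite[Lemma 1.1]{shen2020categories} and Proposition~\ref{o}) the resulting filtration is independent of the choice of $D$; so I may use the same $D$ and $o$ for $X$ and for $X'$.

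It then remains to prove $\ker\Psi=\ker\Psi'$ as subgroups of $\CH_0(\tY^\vee)$. For the inclusion $\ker\Psi\supseteq\CH_0(\tY^\vee)^+_{\rm hom}$ I would upgrade Lemma~\ref{2.1}: specialising $y$ to a point of $Z$ identifies the constant value of $\Psi(y+\iota(y))$ as $2\theta$, so $\Psi(\iota_*v)=2(\deg v)\theta-\Psi(v)$ for every $v\in\CH_0(\tY^\vee)$; in particular $\Psi$ annihilates every $\iota$-invariant homologically trivial class (integrally this uses that $\CH_1(X)$ is torsion-free, since such a class $v$ satisfies $2\Psi(v)=0$). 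The reverse inclusion $\ker\Psi\subseteq\CH_0(\tY^\vee)^+_{\rm hom}$ is precisely the injectivity of $\Psi$ on $\ZZ o\oplus\CH_0(\tY^\vee)^-_{\rm hom}$ furnished by Theorem~\ref{chowiso}. Both inputs apply verbatim to $X'$, so $\ker\Psi=\CH_0(\tY^\vee)^+_{\rm hom}=\ker\Psi'$, a subgroup manifestly determined by $\tY^\vee$ and $\iota$ alone.

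Granting this, the corollary follows symmetrically: if $\Psi(t)\in\rS_i(X)$ then $at-w\in\ker\Psi=\ker\Psi'$ for some $a\neq 0$ and some $w\in T_i$, whence $a\Psi'(t)=\Psi'(w)$ with $w\in T_i$, i.e. $\Psi'(t)\in\rS_i(X')$; the reverse implication is the same argument with the roles of $X$ and $X'$ exchanged. The main obstacle is the exact computation of $\ker\Psi$: the inclusion $\supseteq$ is the elementary consequence of Lemma~\ref{2.1} recorded above, but the inclusion $\subseteq$ genuinely requires the injectivity statement of Theorem~\ref{chowiso}, so one must make sure the latter is established independently of this corollary (indeed Theorem~\ref{chowiso} rests on Lemma~\ref{2.1} and the non-vanishing constant $a$ of Lemma~\ref{nonzero}, not on Corollary~\ref{partner}).
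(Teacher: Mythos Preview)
Your argument is correct and in fact more careful than the paper's, which simply records the corollary as ``immediate according to the definition'' without further comment. You have put your finger on exactly the point the paper glosses over: the defining relation $az=a_1c_1+\cdots+a_ic_i+a_0\theta$ lives in $\CH_1(X)$, not in $\CH_0(\tY^\vee)$, so transferring it to $X'$ is only automatic once one knows $\ker\Psi=\ker\Psi'$. Your identification of this common kernel with $\CH_0(\tY^\vee)^+_{\rm hom}$ via Proposition~\ref{decomposition}/Theorem~\ref{chowiso} is the right way to close this, and you correctly verify there is no circularity (those results rest on Lemma~\ref{2.1}, Lemma~\ref{KeyLemma} and Lemma~\ref{nonzero}, none of which invoke Corollary~\ref{partner}). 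One small simplification: for the inclusion $\ker\Psi\supseteq\CH_0(\tY^\vee)^+_{\rm hom}$ you do not need torsion-freeness. By divisibility of $\CH_0(\tY^\vee)_{\rm hom}$, every $\iota$-invariant homologically trivial class is already of the form $\sum\bigl(t_j+\iota(t_j)-2o\bigr)$, and Lemma~\ref{2.1} (specialised to $o\in Z$, so that the constant value is $2\theta$) kills each summand directly; this is how Proposition~\ref{decomposition} handles it and avoids the appeal to Theorem~\ref{chowiso} for that direction.
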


By the proof of Proposition \ref{ch}, there exists an integer $N$  such that $N\cdot z$ is a linear sum of conics for every $z\in \CH_1(X)$. At the end of the section, we show in Proposition \ref{conic} that every conic is rationally equivalent to a sum of conics in $D$. Hence, 
\begin{equation*}
    \bigcup_{i=0}^{\infty} \rS_i(X)=\CH_1(X).
\end{equation*}

Let $\CH_0(\tY^\vee)^{-}_{\rm hom}$ be the $\iota$-anti-invariant part of $\CH_0(\tY^\vee)_{\rm hom}$ and $\CH_0(\tY^\vee)^{+}_{\rm hom}$ be the $\iota$-invariant part.
There is a decomposition of $\CH_0(\tY^\vee)$:
\begin{equation*}
    \CH_0(\tY^\vee)=\ZZ\cdot o\oplus \CH_0(\tY^\vee)^{-}_{\rm hom}\oplus \CH_0(\tY^\vee)^{+}_{\rm hom}.
\end{equation*}

We show that the filtration on $\CH_1(X)$ can actually be defined on the $\iota$-anti-invariant part $\CH_0(\tY^\vee)^{-}$.

\begin{proposition}\label{decomposition}
The morphism $\Psi:\CH_0(\tY^\vee)\rightarrow \CH_1(X)$  is zero on $\CH_0(\tY^\vee)^{+}_{\rm hom}$ and an isomorphism onto $\CH_1(X)_{\rm hom}$ when restricting to $\CH_0(\tY^\vee)^{-}_{\rm hom}$.
\end{proposition}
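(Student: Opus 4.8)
The plan is to prove the two assertions separately, using the earlier structural results about the maps $\Psi$ and $\alpha_*$ together with the constant cycle surface theorem. First I would treat the vanishing on $\CH_0(\tY^\vee)^+_{\rm hom}$. The key observation is Lemma \ref{2.1}, which says that $\Psi(y + \iota(y))$ is constant in $\CH_1(X)$. For a homologically trivial $\iota$-invariant $0$-cycle, I would write a representative as a difference of effective invariant cycles and reduce to showing that $\Psi$ kills each class of the form $(y + \iota(y)) - (y' + \iota(y'))$. Since $\Psi(y+\iota(y))$ is independent of $y$, this difference maps to zero. A genuinely $\iota$-invariant homologically trivial class is a $\ZZ$-linear combination of such symmetrized differences (after noting that $\iota$-invariance means the class equals its $\iota$-pushforward, so $2\zeta = \zeta + \iota_*\zeta$ lies in the image of the symmetrization operator), so $\Psi$ vanishes on it, at least up to $2$-torsion; I would clean up the factor of $2$ using the torsion-freeness of $\CH_0(\tY^\vee)$ recorded earlier.

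Next I would prove that $\Psi$ restricted to $\CH_0(\tY^\vee)^-_{\rm hom}$ lands in $\CH_1(X)_{\rm hom}$ and is surjective onto it. Surjectivity onto $\CH_1(X)_{\rm hom}$ should follow by combining the decomposition of $\CH_0(\tY^\vee)$ with the vanishing just proved: since $\Psi$ is surjective onto $\CH_1(X)$ up to the contributions of $o$ and the $+$ part (using that $\alpha_*$ is an isomorphism and $\Psi = q_*p^*\alpha_*^{-1}$), and the $+$ part dies, the $-$ part must account for everything homologically trivial. More precisely, I would show $\Psi(o) = \theta$ spans the image of the non-homological part and deduce that $\Psi(\CH_0(\tY^\vee)^-_{\rm hom}) = \CH_1(X)_{\rm hom}$ by a diagram chase against the decomposition.

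The main obstacle, and the heart of the proposition, will be injectivity of $\Psi$ on $\CH_0(\tY^\vee)^-_{\rm hom}$. Here I would invoke Lemma \ref{nonzero}: its second assertion states that $n[c_1] \sim n[c_2]$ in $\CH_1(X)$ implies $[c_1] \sim [c_2]$ in $\CH_0(\tY^\vee)$, which is exactly an injectivity statement for $\Psi$ modulo torsion. To use it I would take $\zeta \in \CH_0(\tY^\vee)^-_{\rm hom}$ with $\Psi(\zeta) = 0$, represent $\zeta$ via conics through $\alpha_*^{-1}$, and apply the $n[c_1]\sim n[c_2]$ criterion to conclude $\zeta$ is rationally trivial on $\tY^\vee$. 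The delicate point is that Lemma \ref{nonzero} is phrased for single conic classes $[c_1],[c_2]$, whereas $\zeta$ is a combination; I expect to need to package the argument so that the self-intersection formula of Proposition \ref{formula}, with its nonzero constant $a$, applies to the symmetrized cycle. I would use the identity $W_*([c]) = \iota([c])$ extracted from the proof of Lemma \ref{nonzero} to show that $\Psi(\zeta) = 0$ forces $\iota_*\zeta$ to vanish; combined with $\zeta$ being $\iota$-anti-invariant (so $\iota_*\zeta = -\zeta$), this yields $\zeta = 0$ up to torsion, and the torsion-freeness of $\CH_0(\tY^\vee)$ finishes the injectivity.
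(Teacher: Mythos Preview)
Your treatment of the vanishing on $\CH_0(\tY^\vee)^+_{\rm hom}$ and of surjectivity is essentially the paper's argument (the paper phrases it via divisibility of $\CH_0(\tY^\vee)_{\rm hom}$ to write every $\iota$-invariant homologically trivial class as a sum of terms $t_i+\iota(t_i)-2o$, which is a cleaner way to handle your factor of~$2$).

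The injectivity step, however, has a real gap. You want to deduce $\iota_*\zeta=0$ from $\Psi(\zeta)=0$ via Proposition~\ref{formula} and the identity $G\cdot W_*([c])=m\,\iota([c])$. The obstruction is that, writing $\xi=\alpha_*^{-1}(\zeta)=\sum a_i[c_i]$, the correspondence $I^2$ acts on $\xi$ by
\[
I^2_*(\xi)=\sum_i a_i\,D_{c_i}^2,
\]
which is \emph{not} determined by $q_*p^*(\xi)\in\CH_1(X)$: the map $c\mapsto D_c^2$ is quadratic, not linear, in the class $[c]$. So from $\Psi(\zeta)=0$ you cannot conclude $I^2_*(\xi)=0$, and hence Proposition~\ref{formula} does not give $W_*(\xi)=0$. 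Lemma~\ref{nonzero} avoids this only because it compares two \emph{single} conics, where $nD_{c_1}=nD_{c_2}$ forces $n^2D_{c_1}^2=n^2D_{c_2}^2$; that trick does not extend to arbitrary combinations.

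The missing idea, and what the paper actually uses, is a linearization specific to the anti-invariant part. By Lemma~\ref{2.1} the class $D_0:=D_c+D_{\iota(c)}\in\CH^2(F)$ is constant. Writing $\zeta=\sum(t_i-\iota(t_i))$, the hypothesis $\Psi(\zeta)=0$ gives $\sum(D_{c_i}-D_{\iota(c_i)})=0$. Multiplying by the \emph{constant} class $G\cdot D_0$ and using the factorization $(D_{c_i}+D_{\iota(c_i)})(D_{c_i}-D_{\iota(c_i)})=D_{c_i}^2-D_{\iota(c_i)}^2$ turns this into a statement about $\sum(D_{c_i}^2-D_{\iota(c_i)}^2)$, which the explicit computation of Lemma~\ref{KeyLemma} evaluates to $4m\sum(\iota(t_i)-t_i)=-4m\,\zeta$. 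This is what forces $\zeta=0$. Note that the paper does not route the argument through Proposition~\ref{formula} or Lemma~\ref{nonzero} at all; it goes directly through Lemma~\ref{KeyLemma} and the constancy of $D_0$.
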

\begin{proof}
Due to the divisibility of $\CH_0(\tY^\vee)_{\rm hom}$, the elements in $\CH_0(\tY^\vee)^{+}_{\rm hom}$ are of the form $\sum (t_i+\iota(t_i)-2o)$ and elements in $\CH_0(\tY^\vee)^{-}$ are of the form $\sum t_i-\iota(t_i)$, where $t_i\in\tY^\vee$.
This yields  $\Psi$ is zero on $\CH_0(\tY^\vee)^{+}_{\rm hom}$ by Lemma \ref{2.1}. 

For the second part, there exists an integer $N$, such that for any $z\in \CH_1(X)$, $N\cdot z$ is a linear combination of conics. Then $\CH_1(X)_{\rm hom}$ is generated by conics by the divisibility of $\CH_1(X)_{\rm hom}$.
Therefore, it suffices to show that $\Psi$ is injective on $\CH_0(\tY^\vee)^{-}$. 

Let $G\subset F$ be an ample divisor and $G.L=m$, where $L$ is a general fibre of $\alpha: F\rightarrow \tY^\vee$. Denote by $c_i$ and $c_i'$ the conics representing the class $\Psi(t_i)$ and $\Psi(t_i')$.
If we have the relation $$\Psi(\sum t_i-\iota(t_i))=\Psi(\sum t_i'-\iota(t_i'))\in \CH_1(X),$$  it implies that:
\begin{equation}\label{ker}
    \sum (D_{c_i}-D_{\iota(c_i)})=\sum (D_{c_i'}-D_{\iota(c_i')}).
\end{equation}
By Lemma \ref{KeyLemma}, we have:
\begin{equation*}
     G\cdot (D_{c_i}-D_{\iota(c_i)})\cdot (D_{c_i}+D_{\iota(c_i)})=G\cdot (D_{c_i}^2-D_{\iota(c_i)}^2)=4m(\iota(t_i)-t_i).
\end{equation*}
Here, we identify $\CH_0(F)$ and $\CH_0(\tY^\vee)$ via the isomorphism $\alpha_*$.
By Lemma \ref{2.1}, we know that $D_c+D_{\iota(c)}$ is constant in $\CH^2(F)$,
we denote it by $D_0$. Therefore we obtain
\begin{equation*}
      G\cdot D_0 \cdot \sum (D_{c_i}-D_{\iota(c_i)})=4m\sum (\iota(t_i)-t_i) 
\end{equation*}
and
\begin{equation*}
      G\cdot D_0 \cdot \sum (D_{c_i'}-D_{\iota(c_i')})=4m\sum(\iota(t_i')-t_i').
\end{equation*}
The left-hand sides are equal by \eqref{ker}. Therefore by torsion-freeness we obtain: $$\sum t_i-\iota(t_i)=\sum t_i'-\iota(t_i').$$ Hence $\Psi$ is injective on $\CH_0(\tY^\vee)^{-}$.
\end{proof}

Consequently, we immediately deduce that
\begin{theorem}\label{chowiso}
We have an isomorphism between groups
\begin{equation}
    \ZZ\cdot o\oplus \CH_0(\tY^\vee)^{-}_{\rm hom}\cong \ZZ\cdot \theta \oplus \CH_1(X)_{\rm hom} \cong \CH_1(X).
\end{equation}
In particular, the group $\CH_1(X)$ is torsion-free. 
\end{theorem}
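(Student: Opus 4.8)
The plan is to deduce Theorem \ref{chowiso} as a nearly formal consequence of Proposition \ref{decomposition}, treating it as a bookkeeping assembly of facts already established. First I would fix the decomposition
\begin{equation*}
    \CH_0(\tY^\vee)=\ZZ\cdot o\oplus \CH_0(\tY^\vee)^{-}_{\rm hom}\oplus \CH_0(\tY^\vee)^{+}_{\rm hom}
\end{equation*}
and recall from Proposition \ref{decomposition} that $\Psi$ kills the summand $\CH_0(\tY^\vee)^{+}_{\rm hom}$ entirely, while restricting to an isomorphism from $\CH_0(\tY^\vee)^{-}_{\rm hom}$ onto $\CH_1(X)_{\rm hom}$. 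The only remaining data to pin down is the behaviour of $\Psi$ on the rank-one summand $\ZZ\cdot o$: by definition $\Psi(o)=\theta$, so the restriction of $\Psi$ to $\ZZ\cdot o$ should land in $\ZZ\cdot \theta$. Assembling these pieces, $\Psi$ induces the desired map $\ZZ\cdot o\oplus \CH_0(\tY^\vee)^{-}_{\rm hom}\to \ZZ\cdot \theta\oplus \CH_1(X)_{\rm hom}$.

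The key steps, in order, are as follows. First I would argue that $\Psi$ surjects onto $\CH_1(X)$: the proof of Proposition \ref{ch} gives an integer $N$ with $N\cdot z$ a linear sum of conics for every $z\in\CH_1(X)$, and combined with the divisibility of $\CH_1(X)_{\rm hom}$ this shows $\CH_1(X)_{\rm hom}$ is generated by classes of conics, each of which is $\Psi(\alpha(c))$ for the corresponding $c$. Together with $\theta=\Psi(o)$ spanning the non-homologically-trivial part, surjectivity onto all of $\CH_1(X)$ follows, yielding the second isomorphism $\ZZ\cdot\theta\oplus\CH_1(X)_{\rm hom}\cong\CH_1(X)$. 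Second, I would verify the first isomorphism is compatible with the direct-sum structure, i.e.\ that $\Psi$ sends $\ZZ\cdot o$ isomorphically onto $\ZZ\cdot\theta$ and $\CH_0(\tY^\vee)^{-}_{\rm hom}$ isomorphically onto $\CH_1(X)_{\rm hom}$, with the $+$-part contributing nothing; injectivity on the $-$-part is exactly the content of Proposition \ref{decomposition}. The torsion-freeness of $\CH_1(X)$ is then immediate, since both $\ZZ\cdot o$ and $\CH_0(\tY^\vee)^{-}_{\rm hom}$ are torsion-free (the latter because $\CH_0$ of a smooth projective variety is torsion-free).

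The one genuine subtlety, rather than the main obstacle, is checking that $\theta\neq 0$ and that the decomposition $\ZZ\cdot\theta\oplus\CH_1(X)_{\rm hom}$ is actually a direct sum with $\theta$ of infinite order, so that $\Psi|_{\ZZ\cdot o}$ is injective and not merely well-defined. Here I would use that $\theta=\Psi(o)=[c]$ is the class of an honest conic, which is non-torsion and homologically nontrivial in $\CH_1(X)$; this separates the $\ZZ\cdot\theta$ factor cleanly from $\CH_1(X)_{\rm hom}$. Since Proposition \ref{decomposition} already supplies the hard analytic input — the injectivity on the anti-invariant part via the intersection-number computation with $D_0$ and Lemma \ref{KeyLemma} — the proof of Theorem \ref{chowiso} itself reduces to combining the direct-sum decomposition of $\CH_0(\tY^\vee)$ with these established surjectivity and injectivity statements, so I expect no serious difficulty beyond this bookkeeping.
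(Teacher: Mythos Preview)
Your overall strategy—reading the theorem off from Proposition \ref{decomposition} together with the behaviour of $\Psi$ on $\ZZ\cdot o$—is exactly what the paper does; indeed the paper offers no proof beyond the phrase ``Consequently, we immediately deduce.'' The first isomorphism, realised by $\Psi$, is correctly assembled in your write-up.

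There is, however, a gap in your argument for the second isomorphism $\ZZ\cdot\theta\oplus\CH_1(X)_{\rm hom}\cong\CH_1(X)$. You claim that $\Psi$ surjects onto $\CH_1(X)$, with $\theta$ ``spanning the non-homologically-trivial part.'' This is not true: $\theta$ is the class of a conic, hence has degree $2$ against the hyperplane class, while lines on $X$ have degree $1$. Since $h^{3,3}(X)=h^{1,1}(X)=1$, the cycle class map lands in $H^6(X,\ZZ)\cong\ZZ$, and the class of a line generates it; consequently the inclusion $\ZZ\cdot\theta\oplus\CH_1(X)_{\rm hom}\hookrightarrow\CH_1(X)$ has image the index-$2$ subgroup of even-degree $1$-cycles. (The paper itself later exhibits a line $l_0$ with $2l_0=\theta$ in the proof of Theorem \ref{maint}.) The second $\cong$ in the statement must therefore be read as an \emph{abstract} group isomorphism: one has $\CH_1(X)/\CH_1(X)_{\rm hom}\cong\ZZ$, so the short exact sequence splits and both sides are abstractly $\ZZ\oplus\CH_1(X)_{\rm hom}$. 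Your torsion-freeness deduction likewise needs this adjustment, but it is easily repaired: the exact sequence $0\to\CH_1(X)_{\rm hom}\to\CH_1(X)\to\ZZ\to 0$ has torsion-free outer terms (the left by Proposition \ref{decomposition} and Roitman's theorem for $\CH_0(\tY^\vee)$), hence so is the middle.
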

When $\tY^\vee$ is birational to $S^{[2]}$ for some K3 surface $S$,  it has been shown in \cite{laterveer2021zero} that $\CH_0(\tY^\vee)_\QQ$ with $\QQ$-coefficient has a natural decomposition for the Chow group. For 0-cycles, the decomposition is
$$\CH_0(\tY^\vee)_\QQ=\QQ\cdot o\oplus\CH_0(S)_{\rm hom}\oplus\CH_0(\tY^\vee)^+_{\rm hom}.$$
Theorem \ref{chowiso} can be viewed as a generalization of this decomposition, where the group $\CH_1(X)$ takes the place of $\CH_0(S)$.

\iffalse
The $EPW$ sextics associated with period partners $X$ and $X'$ are the same. We show that the filtration is intrinsic for $\tY^\vee$. The Lemma \ref{decomposition} implies the following corollary immediately.
\begin{corollary}\label{partner}
The group $\CH_1(X)$ is torsion-free and for any $t\in \CH_0(\tY^\vee)$ and $\ZZ\cdot o\oplus \CH_0(\tY^\vee)^{-}_{\rm hom}\cong \CH_1(X)$. $\Psi(t)\in \rS_i(X)$ iff $R'_*(t)\in \rS_i(X')$.
\end{corollary}
\begin{proof}
For the first part, there exists an integer $N$, such that for any $z\in \CH_1(X)$, $N\cdot z$ is a linear combination of conics.  Hence by the divisibility of $\CH_1(X)_{\rm hom}$, we see that $\CH_1(X)_{\rm hom}\cong B(X)_{\rm hom}\cong \CH_0(\tY^\vee)_{\rm hom}^-$, which is torsion-free.

The second part is obvious since the filtration is defined by $Z$ and a uniruled divisor on $\tY^\vee$ and $\CH_1(X)_{\rm hom}\cong \CH_0(\tY^\vee)_{\rm hom}^-$.

\end{proof}
\fi
\subsection{A result for conics}
In this subsection, we prove that the class of a conic on $X$ is in $\rS_2(X)$. The proof is similar to the case of cubic fourfolds by showing any conic is contained in a certain singular cubic threefold.
Here, we need the flexibility to change the GM fourfold to its period partners, guaranteed by Corollary \ref{partner}. 

We consider the following incidence relation $$\Omega:=\{(A,B)~|~\mathrm{dim} (A\cap B)\geq 9\}\subset \LL\GG(\bigwedge^3V_6)\times \LL\GG(\bigwedge^3V_6).$$ Denote the fibre of the projection over $A$ by $\Omega_A$ and $F_y=y\wedge \bigwedge^2V_6\subset \bigwedge^3V_6$. Clearly, we have $$Y_B^{\geq 2}\subset Y_A^{\geq 1}$$ for $(A,B)\in \Omega$. We let 
$$\Sigma=\{A\in \LL\GG(\bigwedge^3V_6)~|~ A {\rm ~contains~ a~ decomposable~ vector}\},$$
and $\Sigma_k$ be the closure of the locus of $A$ that contains exactly $k$ decomposable vectors. 

We can deduce the following lemmas.  
\begin{lemma}
For any $A,B\subset \bigwedge^3V_6$ with $(A,B)\in \Omega$ and $A\neq B$,  we can find  $V_5\subset V_6$, such that the corresponding GM varieties satisfy $X_B\subset X_A$ and are of dimension $3$ and $4$.
\end{lemma}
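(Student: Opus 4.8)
The plan is to choose the hyperplane $V_5$ so that $B$ meets $\bigwedge^3 V_5$ in dimension $2$ while $A$ meets it in dimension $1$; the dimension formula for GM varieties then forces $\dim X_B = 3$ and $\dim X_A = 4$, and the containment will come from comparing the Pl\"ucker spaces and the defining quadrics. Throughout I would use the standard reconstruction of a GM variety from its Lagrangian data (see \cite{debarre2020survey}): for $(V_6,V_5,A)$ one has $X_A = \Gr(2,V_5)\cap \PP(W_A)\cap Q_A$ with $\dim X_A = 5 - \dim(A\cap \bigwedge^3 V_5)$ and $W_A = (A\cap \bigwedge^3 V_5)^{\perp}\subset \bigwedge^2 V_5$, the annihilator for the perfect pairing $\bigwedge^2 V_5\times \bigwedge^3 V_5\to \bigwedge^5 V_5\cong \mathbb{C}$.

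First I would fix $V_5$. Since $A\neq B$ are both Lagrangian, $\dim(A\cap B)=9$. The hyperplanes with $\dim(B\cap \bigwedge^3 V_5)\geq 2$ form a nonempty surface (it is the dual EPW surface $Y_{B^\vee}^{\geq 2}$ under $V_5\leftrightarrow [V_5]\in \PP(V_6^\vee)$), and I take $[V_5]$ general on it; under the standing assumption $Y_{B^\vee}^{\geq 3}=\emptyset$ this gives $\dim(B\cap \bigwedge^3 V_5)=2$ exactly. The very same count that proves $Y_B^{\geq 2}\subset Y_A^{\geq 1}$ — namely $\dim\big((A\cap B)\cap \bigwedge^3 V_5\big)\geq 9+2-10=1$ computed inside $B$ — forces $\dim(A\cap \bigwedge^3 V_5)\geq 1$, and for general $(A,B)\in\Omega$ and general $[V_5]$ on the surface this is an equality. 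Hence $\dim X_A = 4$ and $\dim X_B = 3$, and a further general choice makes both varieties smooth.

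Next I would prove $X_B\subset X_A$. For the linear spaces: the count above shows $(A\cap B)\cap \bigwedge^3 V_5$ is $1$-dimensional and lies inside the $1$-dimensional $A\cap \bigwedge^3 V_5$, so the two coincide; as it also lies in $B$ we get $A\cap \bigwedge^3 V_5\subset B\cap \bigwedge^3 V_5$, and passing to annihilators gives $W_B\subset W_A$, i.e. $\PP(W_B)\subset \PP(W_A)$. For the quadrics, fix a splitting $V_6 = V_5\oplus \langle v_0\rangle$, giving $\bigwedge^3 V_6 = \bigwedge^3 V_5\oplus v_0\wedge \bigwedge^2 V_5$ with projections $\pr_1,\pr_2$; the GM quadric attached to a Lagrangian $A$ is $q_A(w)=\langle \pr_1(\tilde w),w\rangle$ for any $\tilde w\in A$ with $\pr_2(\tilde w)=v_0\wedge w$, which is well defined on $W_A$. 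Since $\ker(\pr_2|_{A\cap B})=(A\cap B)\cap \bigwedge^3 V_5$ is $1$-dimensional, $\pr_2(A\cap B)$ is $8$-dimensional and therefore equals $v_0\wedge W_B$; thus every $w\in W_B$ admits a lift $\tilde w\in A\cap B$, and evaluating $q_A$ and $q_B$ on this common lift yields $q_A|_{W_B}=q_B$. Consequently $X_A\cap \PP(W_B)=\Gr(2,V_5)\cap \PP(W_B)\cap Q_B = X_B$, the desired containment (which also exhibits $X_B$ as a hyperplane section of $X_A$).

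The dimension bookkeeping and the genericity needed for smoothness are routine. The step I expect to require the most care is the quadric comparison $q_A|_{W_B}=q_B$: it rests on the explicit recovery of the GM quadric from the Lagrangian via the chosen splitting, together with the observation that the shared $9$-dimensional space $A\cap B$ already surjects onto $v_0\wedge W_B$, so that $A$ and $B$ compute literally the same quadratic form on $W_B$.
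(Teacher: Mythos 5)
Your proof is correct, and its key move --- choosing $[V_5]$ so that $\dim(B\cap\bigwedge^3V_5)=2$ and $\dim(A\cap\bigwedge^3V_5)=1$, i.e. $V_5\in Y_{A^\vee}^{=1}\cap Y_{B^\vee}^{=2}$ --- is exactly the paper's entire proof, which consists of this choice plus a citation of Theorem \ref{moduli}. Where you differ is in what comes after: the paper leaves the containment $X_B\subset X_A$ to the cited Debarre--Kuznetsov correspondence, while you verify it directly from the Lagrangian-data reconstruction. Your count inside $B$ gives $A\cap\bigwedge^3V_5=(A\cap B)\cap\bigwedge^3V_5\subset B\cap\bigwedge^3V_5$, hence $W_B\subset W_A$ by passing to annihilators; and since $\ker(\pr_2|_{A\cap B})$ is the $1$-dimensional space $(A\cap B)\cap\bigwedge^3V_5$, the image $\pr_2(A\cap B)$ is all of $v_0\wedge W_B$, so $q_A$ and $q_B$ can be evaluated on $W_B$ with a common lift in $A\cap B$, giving $q_A|_{W_B}=q_B$ and $X_B=X_A\cap\PP(W_B)$. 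This is a genuine addition rather than a restatement: Theorem \ref{moduli} as quoted only controls dimensions and fibers of the period map, not containment, so your argument makes the lemma self-contained and even yields the sharper conclusion that $X_B$ is a linear section of $X_A$. Two caveats, both minor. First, like the paper, you need $Y_{A^\vee}^{=1}\cap Y_{B^\vee}^{=2}\neq\emptyset$, i.e. $Y_{B^\vee}^{=2}\neq\emptyset$ and $Y_{B^\vee}^{=2}\not\subset Y_{A^\vee}^{\geq 2}$; you flag this as a genericity hypothesis on $(A,B)$, which suffices for the paper's application (general $B\in\Omega_A\cap\Sigma_8$), though strictly speaking the lemma is stated for all $(A,B)\in\Omega$ with $A\neq B$, and the paper is equally silent on this point. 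Second, your closing remark that a further general choice of $V_5$ makes both varieties smooth should be dropped: in the intended application $B\in\Sigma_8$ contains decomposable vectors, which force nodes on $X_B$ for every admissible $V_5$; fortunately the lemma asserts nothing about smoothness, so this aside does not affect the validity of your proof.
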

\begin{proof}
It's enough to take $V_5\in Y_{A^{\vee}}^{=1}\cap Y_{B^{\vee}}^{=2}$ by Theorem \ref{moduli}.
\end{proof}

\begin{lemma}
For a general $B\in \Sigma_8$ and $V_5\in Y_{B^{\vee}}^{=2}$, let $F(X_B)$ be the surface of conics on the corresponding GM threefold $X_B$,  then  \rm{dim} Alb$(F(X_B))=2$.
\end{lemma}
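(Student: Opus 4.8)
The plan is to establish that $\dim \mathrm{Alb}(F(X_B)) = 2$ for a general $B \in \Sigma_8$ by linking the Albanese of the surface of conics on the GM threefold $X_B$ to the intermediate Jacobian of $X_B$, and then to the dual double EPW structure. First I would recall that for a smooth GM threefold $X_B$, the minimal dimensional interesting cohomology sits in $\mathrm{H}^3(X_B)$, and that the intermediate Jacobian $J(X_B)$ is a $10$-dimensional abelian variety isogenous to the dual double EPW sextic data associated with $B$. The surface of conics $F(X_B)$ carries an Abel--Jacobi map to $J(X_B)$, and by general results on the geometry of conics on GM threefolds (following Iliev--Manivel \cite{iliev2011fano} and the birational identification of $F(X_B)$ with the pull-back of $Y^{\geq 2}_{B^\vee}$ in the relevant double EPW sextic), the image generates a subtorus whose dimension we must pin down.

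The key steps, in order, would be: (i) identify $F(X_B)$ up to birational equivalence with the surface $Y^{\geq 2}_{B^\vee}$ sitting inside the double EPW sextic, using the construction recalled in the preliminaries; (ii) analyze the special feature of $B \in \Sigma_8$, namely that $B$ contains exactly $8$ decomposable vectors, which forces the surface $Y^{\geq 2}_{B^\vee}$ into a degenerate configuration while keeping $V_5 \in Y^{=2}_{B^\vee}$ generic enough for $X_B$ to be smooth; (iii) compute the dimension of the image of the Albanese map, equivalently the rank of the transcendental part of $\mathrm{H}^1(F(X_B))$, by relating it to $\mathrm{H}^3(X_B)$ via the cylinder/conic correspondence. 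The point is that a nonzero $2$-dimensional Albanese reflects exactly two independent holomorphic $1$-forms surviving on the normalization of $F(X_B)$, which should match the specific rank drop imposed by the eight decomposable vectors.

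I expect the main obstacle to be step (iii): controlling the precise dimension of $\mathrm{Alb}(F(X_B))$ rather than merely bounding it. The generic smooth GM threefold has a $10$-dimensional intermediate Jacobian, so for a general member one would expect the Albanese of the conic surface to be larger; the value $2$ must come from the very special nature of $\Sigma_8$, where the Lagrangian $B$ degenerates maximally in the stratification by decomposable vectors. I would handle this by a careful Hodge-theoretic or monodromy argument on the family over $\Sigma_8$, isolating the contribution of the eight decomposable vectors as eight nodes (or equivalent singular points) on the EPW surface $Y^{\geq 2}_{B^\vee}$, each of which kills a portion of the would-be transcendental cohomology. Concretely, one compares the generic fiber with the specialization and tracks the vanishing cycles associated with these eight special vectors, showing that exactly a $2$-dimensional piece of $\mathrm{H}^{1,0}$ persists on the resolution of $F(X_B)$. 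Verifying that the count of surviving forms is precisely two — neither more nor less — is the delicate arithmetic of the argument and is where I would concentrate the technical effort.
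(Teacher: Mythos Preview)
Your proposal has a fundamental misidentification of where the eight decomposable vectors act geometrically, and this derails the argument. You assume that $V_5 \in Y_{B^\vee}^{=2}$ can be chosen ``generic enough for $X_B$ to be smooth'' and then treat the eight decomposable vectors as producing singular points on the EPW surface $Y_{B^\vee}^{\geq 2}$. In fact the opposite is true: for $B \in \Sigma_8$ with the eight decomposable vectors $\bigwedge^3 V_{3i}$ chosen outside $\bigwedge^3 V_5$, each one forces a \emph{node on the threefold $X_B$ itself} (via \cite[Theorem 3.16]{debarre2018gushel} and \cite[Lemma 4.1]{debarre2011nodal}). So $X_B$ is an $8$-nodal GM threefold, not a smooth one, and the intermediate Jacobian picture you invoke for smooth GM threefolds (with its $10$-dimensional $J(X_B)$) does not apply as stated.

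Because of this, the paper's route is entirely different and much more concrete than a Hodge-theoretic specialization argument. One projects the nodal $X_B$ from one of its nodes to $X_O \subset \PP^6$; the net of quadrics through $X_O$ has discriminant $D = C \cup L$ with $C$ a plane sextic acquiring exactly $7$ nodes from the remaining seven decomposable vectors (\cite[Proposition 2.20]{ferretti2009chow}). The normalization $N$ of $C$ then has genus $3$, its natural \'etale double cover $\tilde{N} \to N$ gives a Prym variety of dimension $g(N)-1 = 2$, and Logachev's description \cite{logachev2012fano} together with Welters \cite{welters1981abel} identifies $\mathrm{Alb}(F(X_B))$ with this Prym. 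Your vanishing-cycle count, even if it could be made rigorous, would have to reproduce exactly this arithmetic $10 - 7 - 1 = 2$; but without recognizing that $X_B$ is nodal and invoking the projection-from-a-node/Prym machinery, there is no mechanism in your outline that pins the dimension down to $2$ rather than merely bounding it.
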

\begin{proof}
For a general choice of $B\in \Sigma_8$ and $V_5$, we may assume that the $8$ decomposable vectors are $$\bigwedge^3V_{31},\ldots \bigwedge^3V_{38}\in B\cap \bigwedge^3V_6$$ and $V_{31}, \ldots V_{38}\not\subset  \bigwedge^3V_5$. Then, by \cite[Proposition 2.24]{debarre2018gushel}, the Grassmaninan hull $\Gr(2,V_5)\cap W_B$ is smooth. 
$\bigwedge^3V_{31}$ can be written as $v\wedge v_1\wedge v_2$ with $v\in V_6\setminus V_5$, and $O=v_1\wedge v_2$ is the kernel of $q(v)$, by \cite[Theorem 3.16]{debarre2018gushel}. Then by \cite[Lemma 4.1]{debarre2011nodal}, $X_B$ has a node at $O$. Then we can project $X_B\subset \PP^7$ to $X_O\subset \PP^6$ from $O$ as in \cite{debarre2011nodal}.  

The quadrics containing $X_O$ form a net $P$. Let $D$ be the discriminant curve parameterizing singular quadrics. Then there is a line $L\subset D$ corresponding to quadrics containing the projection of $\Gr(2,V_5)$.
Therefore, $$D=C\cup L$$ for some degree $6$ curve $C\subset P$.

We see that the curve $\PP(V_{31})\cap Y_B^{\geq 2}$ parameterizing quadrics in $Y_B^{\geq 2}$ of corank $\geq 2$ whose vertices contain $O$, thus it equals $C$. Then by \cite[Proposition 2.20]{ferretti2009chow}, $C$  has $7$ nodes corresponding to the extra decomposable vectors. For a general $B$, $$Y_B^{=3}=\emptyset,$$ hence the quadrics in $C$ are all of rank $6$. There is a \'etale double cover $\tilde{C}\rightarrow C$, corresponding to the choice of a family of $3$-planes contained in a quadric in $C$, see \cite[section 4.2]{debarre2011nodal}. Let $p':\tilde{N} \rightarrow N$ be the normalization of $p:\tilde{C}\rightarrow C$.

There is a morphism $P^{\vee}\rightarrow C^{(6)}$, sending a line in $P$ to its intersection with $C$. Denote $S'$ the pull back of $P^{\vee}$ in $\tilde{C}^{(6)}$ and $S''$ the further pull back in $\tilde{N}^{(6)}$. 
By \cite[Proposition 5.8]{logachev2012fano}, $S'$ has two irreducible components and  $F(X_B)$ is birational to one of the components. It follows that $F(X_B)$ is birational to a component of $S''$. Then by \cite[Theorem 8.19]{welters1981abel}, $${\rm Alb} (F(X_B))\cong Pr(\tilde{N}/N),$$ which is of dimension $2$ since $g(N)=3$.
\end{proof}
\begin{lemma}
For a general point $y\in Y_A$, we can find  $B\in \Omega_A\cap \Sigma_8$, such that $y\in Y_B^{\geq 2}$.
\end{lemma}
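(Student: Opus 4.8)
The plan is to reduce the statement to a dimension count on an incidence variety, carried by one linear-algebra construction together with a dominance argument. Throughout, equip $\bigwedge^3V_6$ with the symplectic pairing $(\eta,\eta')\mapsto\eta\wedge\eta'\in\bigwedge^6V_6\cong\mathbb{C}$, write $v^\perp$ (resp. $A^\perp$) for the orthogonal of a vector (resp. a subspace), and recall that a Lagrangian $A$ satisfies $A^\perp=A$ and that $\dim\LL\GG(\bigwedge^3V_6)=55$.

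First I would produce, for a general pair $(A,y)$ with $y\in Y_A$, an explicit family of Lagrangians realizing the two conditions $(A,B)\in\Omega$ and $y\in Y_B^{\ge2}$. Since $y\in Y_A=Y_A^{\ge1}$ is general, $A\cap F_y$ is one-dimensional; let $w$ span it. For $v\in F_y$ with $v\notin A$ put
\[
B_v:=(A\cap v^\perp)+\mathbb{C}v .
\]
As $v\notin A=A^\perp$ we have $A\not\subset v^\perp$, so $A\cap v^\perp$ is a hyperplane of $A$; it is isotropic and orthogonal to $v$, whence $B_v$ is isotropic of dimension $10$, i.e. Lagrangian, with $\dim(A\cap B_v)\ge9$, so $(A,B_v)\in\Omega$. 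Moreover $w\in A$ and $w\in v^\perp$ (both $w,v$ lie in the Lagrangian $F_y$), so $w\in A\cap v^\perp\subset B_v$; together with $v\in B_v$ this gives $\mathbb{C}w+\mathbb{C}v\subset B_v\cap F_y$, hence $\dim(B_v\cap F_y)\ge2$ and $y\in Y_{B_v}^{\ge2}$. One checks that $B_v$ depends only on $[v]\in\PP(F_y/\mathbb{C}w)\cong\PP^8$, so the $B_v$ sweep out an $8$-dimensional family $\mathcal B\subset\Omega_A$, every member of which already satisfies $y\in Y_{B}^{\ge2}$.

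Next I would assemble the three conditions into the incidence variety
\[
\mathcal I=\{(A,B,y)\mid (A,B)\in\Omega,\ B\in\Sigma_8,\ y\in Y_B^{\ge2}\},
\]
where $y\in Y_B^{\ge2}$ forces $y\in Y_A$ automatically, since $(A,B)\in\Omega$ gives $Y_B^{\ge2}\subset Y_A^{\ge1}$. Projecting $\mathcal I$ to $\Sigma_8$, the fibre over $B$ is $\{A\mid\dim(A\cap B)\ge9\}\times Y_B^{\ge2}$. A Schubert computation in $\LL\GG(\bigwedge^3V_6)$ gives $\dim\{A\mid\dim(A\cap B)\ge9\}=10$ and $\dim Y_B^{\ge2}=2$, while the incidence of Lagrangians with the cone over $\Gr(3,V_6)\subset\PP(\bigwedge^3V_6)$ yields $\operatorname{codim}\Sigma_k=k$, so that $\dim\Sigma_8=47$. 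Hence
\[
\dim\mathcal I=47+10+2=59=\dim\{(A,y)\mid y\in Y_A\},
\]
and the lemma is precisely the assertion that the projection $\pi\colon\mathcal I\to\{(A,y)\mid y\in Y_A\}$ is dominant.

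The main obstacle is exactly this borderline dominance: equivalently, that for general $(A,y)$ the fibre $\{B\in\Omega_A\mid\dim(B\cap F_y)\ge2\}$, which contains the explicit $\mathcal B\cong\PP^8$ above, meets the codimension-$8$ locus $\Sigma_8$. I would settle it by specialization. If some fibre of $\pi$ is finite and nonempty, then, $\mathcal I$ being irreducible of dimension equal to that of the irreducible target (it fibers over $\Sigma_8$ with irreducible fibres), the theorem on fibre dimensions forces $\pi$ to be dominant. It therefore suffices to exhibit one $B_0\in\Sigma_8$, built from decomposable vectors $\bigwedge^3V_{31},\dots,\bigwedge^3V_{38}$ as in the preceding lemmas, to choose $A_0$ with $\dim(A_0\cap B_0)\ge9$ and a point $y_0\in Y_{B_0}^{\ge2}$, and to verify by a tangent-space computation at $B_0$ that the fibre of $\pi$ over $(A_0,y_0)$ is zero-dimensional. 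An alternative is to compute the intersection number $[\mathcal B]\cdot[\Sigma_8]$ in $\LL\GG(\bigwedge^3V_6)$ by Schubert calculus; the subtlety there is that $\mathcal B$ and $\Sigma_8$ are both distinguished rather than independently movable, so the positivity of the number must be upgraded to a genuine proper intersection, and not merely that of a generic translate.
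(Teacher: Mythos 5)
Your setup is sound and in fact parallels the paper's own argument: both reduce the lemma to the dominance of a projection from an incidence variety whose dimension exactly matches the target (your count $47+10+2=59$ is the paper's count with $y$ allowed to vary; the paper fixes $y$ and gets equality at $54$), and your explicit family $B_v=(A\cap v^{\perp})+\mathbb{C}v$, parametrized by $\PP(F_y/\mathbb{C}w)\cong \PP^8$, is a correct and rather nice way to see that the locus $\{B\in \Omega_A~|~\dim (B\cap F_y)\geq 2\}$ which must meet $\Sigma_8$ is $8$-dimensional (the paper instead quotes this dimension from Ferretti). The genuine gap is that you stop exactly where the mathematical content begins: the lemma is equivalent to the assertion that this $8$-dimensional family meets the codimension-$8$ locus $\Sigma_8$, and your plan for proving it --- ``exhibit one $B_0$, choose $A_0$ and $y_0$, and verify by a tangent-space computation that the fibre is zero-dimensional'' --- is a program, not a proof; no such point is produced and no computation is performed. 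This is precisely the step the paper carries out: using Ferretti's descriptions of $T_{(A,B)}\Omega$, $T_B\Sigma_8$ and $T_B\Omega_y$, it shows $\ker d\pi=0$ at a general point by observing that a nonzero kernel element $(q_A,q_B)$ forces $q_A=0$, hence $q_B$ vanishes on the hyperplane $U=A\cap B$ and so has zero locus $U\cup U'$ for a second hyperplane $U'$; since the $8$ decomposable vectors of $B$ and the space $B\cap F_y$ are not contained in $A$, they must lie in $U'$, while for general $B$ they span the $10$-dimensional $B$ --- a contradiction. Without this (or an equivalent computation), your argument only confirms that the expected dimensions are compatible, which is never sufficient for a borderline existence statement.

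Two secondary points. First, your appeal to the theorem on fibre dimensions requires the point with finite fibre to lie on a component of $\mathcal{I}$ of dimension at least $59$; since the fibres $\Omega_B\times Y_B^{\geq 2}$ can jump in dimension and need not be irreducible over special $B\in\Sigma_8$, and the irreducibility of $\Sigma_8$ itself is not addressed, the parenthetical ``it fibers over $\Sigma_8$ with irreducible fibres'' does not by itself give irreducibility of $\mathcal{I}$; one must either prove this or restrict attention to the component dominating $\Sigma_8$ and check the chosen point lies on it. Second, your worry about the Schubert-calculus alternative is misplaced in one direction and understated in the other: if the intersection number $[\overline{\mathcal{B}}]\cdot[\Sigma_8]$ were nonzero, then $\overline{\mathcal{B}}\cap\Sigma_8\neq\emptyset$ already follows, with no properness or moving needed, because disjoint closed subvarieties of a smooth projective variety have zero intersection product --- and any point of this intersection proves the lemma outright; the real obstacle on that route is that neither class is computed, and establishing nonvanishing of the number is not obviously easier than the tangent-space argument you deferred.
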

\begin{proof}
The proof is similar to \cite[Proposition 5.1]{ferretti2012special}.
Let $$\Gamma=\Omega\cap \LL\GG(\bigwedge^3V_6)\times \Sigma_8\cap \LL\GG(\bigwedge^3V_6)\times \Omega_y,$$ where $\Omega_y=\{B~|~\mathrm{dim}(B\cap F_y)\geq 2\}$ and $F_y=y\wedge \bigwedge^2V_6$.  We have the two projections:

\begin{equation*}
\begin{tikzcd}
                       & \Gamma \arrow[ld, "\pi"'] \arrow[rd, "\rho"] &          \\
\LL\GG(\bigwedge^3V_6) &                                              & \Sigma_8
\end{tikzcd}    
\end{equation*}
The fiber of $\rho$ is a codimension-$2$ subvariety of $\Omega_B$, which is $8$-dimensional, see \cite[Lemma 5.3]{ferretti2012special}. It yields that dim $\Gamma=$ dim $\LL\GG(\bigwedge^3V_6)$. Therefore it suffices to show that $d\pi$ at a general point $(A,B)\in \Gamma$ is an isomorphism.

Let $(A,B)\in \Gamma$ be general such that the $8$ decomposable forms $\alpha_1,\ldots \alpha_8$ in $B$ are linearly independent and that $\alpha_1,\ldots \alpha_8$ and $F_y\cap B$ are not contained in $A$. Let $U=A\cap B$. According to \cite[Lemma 5.4]{ferretti2012special}, we have the description of tangent spaces:
\begin{equation*}
\begin{split}
    T_{(A,B)}\Omega &=\{(q_A,q_B)\in \mathrm{Sym}^2(A^{\vee})\times \mathrm{Sym}^2(B^{\vee})~|~q_A|_U=q_B|_U\}\\
    T_B\Sigma_8 &=\{q_B\in \mathrm{Sym}^2(B^{\vee})~|~q_B(\alpha_1)=\ldots =q_B(\alpha_8)=0\}\\
    T_B\Omega_y &=\{q_B:B\rightarrow B^{\vee}~|~q_B(F_y)\subset B+F_y/B\}.
\end{split}
\end{equation*}

We assume on the contrary that there is a nonzero $t=(q_A,q_B)\in \mathrm{Ker}~d\pi$. It implies that $q_A=0$ and thus Ker $q_B=U\cup U'$ for some hyperplane $U'\subset B$. According to the assumptions and the description of tangent spaces, the $8$ decomposable forms and $F_y\cap B$  are contained in Ker $q_B$ hence contained in $U'$. Then for a general $B$,  dim $U'=10$, which is a contradiction. 
\end{proof}
Now we can deduce the main result of this subsection:
\begin{proposition}\label{conic}
For any conic $c\subset X$, we have $c\in \rS_2(X)$.
\end{proposition}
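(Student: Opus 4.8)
The plan is to prove $c \in \rS_2(X)$ by reducing the class of an arbitrary conic to one that is controlled by the geometry of a nodal GM threefold sitting inside a period partner of $X$. The key idea, as flagged in the subsection heading, mirrors the cubic fourfold argument of Shen--Yin: we want to show that every conic lies in a uniruled divisor up to rational equivalence, after possibly moving to a period partner $X'$ (which is harmless by Corollary~\ref{partner}). Concretely, I would argue that the point $y = \alpha(c) \in \tY^\vee$ can be placed inside $Y_B^{\geq 2}$ for some $B \in \Omega_A \cap \Sigma_8$, using the third lemma above (the surjectivity/isomorphism of $d\pi$). This $B$ corresponds to a nodal GM threefold $X_B \subset X_A$ with $X_A$ a period partner of $X$, and the locus $Y_B^{\geq 2}$ pulls back (birationally) to the Fano surface of conics $F(X_B)$ by the last assertion of the Preliminaries.

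The heart of the argument is then the Albanese computation in the second lemma: for general $B \in \Sigma_8$, one has $\dim \operatorname{Alb}(F(X_B)) = 2$. I would exploit this as follows. The conics parameterized by $F(X_B)$ all lie on the threefold $X_B$, hence on the fourfold $X_A$; their classes in $\CH_1(X_A)$ are then governed by the two-dimensional Albanese rather than by a larger, uncontrolled group. The point is that $F(X_B)$, being birational to a component of the Prym-type variety $S''$ with $\dim\operatorname{Pr}(\tilde N/N) = 2$, carries a Chow group of zero-cycles whose homologically trivial (or Albanese-trivial) part is accounted for by at most a two-dimensional abelian variety. This should let me write the class of a general conic through $y$ as a combination of a \emph{fixed} bounded number of conic classes lying in a uniruled divisor $D \subset \tY^\vee$, plus a multiple of $\theta$, which is exactly the condition defining $\rS_2(X)$. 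The role of the ``$2$'' in $\rS_2$ is precisely the dimension of this Albanese.

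More carefully, I would proceed step by step. First, take a general conic $c$; by the first lemma choose $V_5$ so that $y = \alpha(c)$ lands in some $Y_B^{\geq 2}$ with $(A,B) \in \Omega$ and $X_B \subset X_A$ of dimensions $3$ and $4$; by the third lemma arrange $B \in \Sigma_8$ so that the second lemma's Albanese bound applies. Second, identify $c$ (up to the $\PP^1$-fiber ambiguity of $\alpha$, absorbed into $\theta$ via Lemma~\ref{2.1}) with a point of $F(X_B)$, and use the fact that $F(X_B)$ is covered by rational curves or carries enough rational equivalences—coming from the two-dimensional Albanese and the linear systems on the relevant Del Pezzo/ruled surfaces—to move the class of $c$ into a uniruled divisor $D$ on $\tY^\vee$. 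Third, count: the Albanese dimension $2$ forces that only two ``independent'' conic classes are needed, yielding $az = a_1 c_1 + a_2 c_2 + a_0 \theta$ as in the definition of $\rS_2$. Finally, invoke Corollary~\ref{partner} to transport the conclusion back from $X_A$ to the original $X$.

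The main obstacle I expect is the passage from the abstract Albanese bound $\dim\operatorname{Alb}(F(X_B)) = 2$ to an \emph{effective} statement about rational equivalence of conic classes that produces exactly two summands in a uniruled divisor. The Albanese controls the homologically trivial part of $\CH_0(F(X_B))$ only up to the Bloch--Srinivas/Roitman mechanism, so I must verify that pushing forward along $\Psi$ (or along the correspondence $q_*p^*$) turns Albanese-equivalence on $F(X_B)$ into genuine rational equivalence of $1$-cycles on $X$ landing in the span of conics in $D$ and $\theta$. Making this transfer precise—ensuring the ``fixed bounded number'' of conics really is $2$ and really sits in a single uniruled divisor—is where the technical work lies; everything else is bookkeeping built on the three lemmas and Corollary~\ref{partner}.
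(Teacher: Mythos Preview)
Your overall strategy matches the paper's: place the conic on a nodal GM threefold $X_B\subset X'$ via the three lemmas, use the $2$-dimensional Albanese of $F(X_B)$, and transfer back with Corollary~\ref{partner}. The skeleton is correct.

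However, the step you yourself flag as ``the main obstacle'' is not mere bookkeeping, and the tools you propose (Bloch--Srinivas/Roitman, rational curves covering $F(X_B)$) are not the ones that close it. The paper fills this gap with two precise inputs you are missing. First, one needs Voisin's result \cite{voisin2013abel} that for a resolution $\tilde X_B$ of the rationally connected threefold $X_B$ one has $\CH_1(\tilde X_B)_{\rm hom}\cong J(\tilde X_B)$; this is what forces the map $\CH_0(\tilde S)_{\rm hom}\to \CH_1(\tilde X_B)_{\rm hom}$ to factor through $\operatorname{Alb}(\tilde S)$, and without it the Albanese dimension says nothing about rational equivalence of $1$-cycles on the fourfold. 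Second, one needs the argument of \cite[Section~2.4]{shen2020categories} showing that the curve $R'$ obtained by intersecting the image surface $S'=\alpha(F(X_B))\subset\tY^\vee$ with the uniruled divisor $D$ satisfies $R'^{(2)}\twoheadrightarrow\operatorname{Alb}(\tilde S)$. It is this surjection, combined with the fact that points of $R'$ already lie in $\rS_1$, that produces the explicit expression $c=c_1+c_2-\theta$ with $c_1,c_2\in\rS_1(X')$. Your sketch never singles out the curve $S'\cap D$ or explains why two points on it suffice to hit an arbitrary Albanese class, so as written the argument does not go through.
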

\begin{proof}
We may assume that $c$ is a general conic.
Combining the above  Lemmas, there exist a GM $3$-fold $X_B$  with $8$ nodes containing $c$ and a period partner $X'$ of $X$ such that $X_B\subset X'$. Let $S'$ be the image of $$F(X_B)\subset F(X)\xrightarrow{\alpha} \tY^\vee$$ and $\tilde{S}$ be the resolution of $S'$. Then a conic lying in $R'$ is in $\rS_1(X)$, where $R'$ is the normalization of $S'\cap D$.

The argument of \cite[Section 2.4]{shen2020categories} implies that there is a surjection $$R'^{(2)}\twoheadrightarrow Alb(\tilde{S}).$$  
For a resolution $\tilde{X}_B\rightarrow X_B$, we have $\CH_1(\tilde{X}_B)_{\rm hom}\cong J(\tilde{X}_B)$ by \cite[Theorem 0.3]{voisin2013abel}. Hence, $\CH_0(\tilde{S})_{\rm hom}\rightarrow \CH_1(\tilde{X}_B)_{\rm hom}$ factors through $Alb(\tilde{S})$.  By the surjectivity of $R'^{(2)}\rightarrow Alb(\tilde{S})$, 
there exists two conics $c_1$ and $c_2\in \rS_1(X')$ such that 
$c=c_1+c_2-\theta\in \CH_1(\tilde{X}_B)$. Then the result follows from Corollary \ref{partner}.
\end{proof}

\section{Sheaves supported on conics and lines}\label{S5}
In this section, we introduce a link between the filtration on $\CH_1(X)$ and the Kuznetsov component $\cA_X$ of the derived category of $X$ and prove Theorem \ref{mainthm}. We assume that $X$ is very general.

Let $i^*: D^b(X)\rightarrow \cA_X$ be the left adjoint of the inclusion $i_*:\cA_X\rightarrow D^b(X)$ and $\pr: K_0(X)\rightarrow K_0(\cA_X)$ be the projection of Grothendieck groups.  Let $\overline{\pr}$ be the further projection to $K_{\rm num}(X)$. For a very general $X$, the numerical Grothendieck group $K_{\rm num}(X)\cong \ZZ^{\oplus 2}$ and under the basis $\lambda_1$, $\lambda_2$ the Euler form is given by (cf. \cite[Lemma 2.4]{pertusi2019double}): 
$$\begin{pmatrix}
-2 & 0 \\ 0 & -2
\end{pmatrix},$$
where $\lambda_1=\overline{\pr} [\cO_c(1)]$ and $\overline{\pr} [\cO_p]=-\lambda_1-2\lambda_2$ for a conic $c\subset X$ and a point $p\in X$.  

\begin{lemma}\label{K0}
$K_0(\cA_X)\otimes \QQ$  is generated by $\pr [\cO_c(1)]$ and $\pr [\cO_p]$.
\end{lemma}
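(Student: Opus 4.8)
The plan is to show that the two classes $\pr[\cO_c(1)]$ and $\pr[\cO_p]$ rationally generate $K_0(\cA_X)$ by exploiting the semiorthogonal decomposition together with the structure of $\CH_1(X)$ established earlier. First I would observe that $K_0(\cA_X)\otimes\QQ$ is the orthogonal complement, under the projection $\pr$, of the exceptional objects $\cO_X,\cU_X^{\vee},\cO_X(1),\cU_X^{\vee}(1)$ appearing in the decomposition ${\rm D}^b(X)=\langle\cA_X,\cO_X,\cU_X^{\vee},\cO_X(1),\cU_X^{\vee}(1)\rangle$. Since $K_0(X)\otimes\QQ$ surjects onto $K_0(\cA_X)\otimes\QQ$ via $\pr$, it suffices to exhibit enough classes in $K_0(X)\otimes\QQ$ whose images span.

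The key step is to use Proposition \ref{ch}, which tells us that $\CH_1(X)_\QQ$ is generated by conics. Passing through the Chern character / Grothendieck–Riemann–Roch dictionary, the rational Grothendieck group $K_0(X)\otimes\QQ$ is isomorphic to $\CH^*(X)_\QQ=\bigoplus_i\CH^i(X)_\QQ$. The summands $\CH^0$, $\CH^1$, and (for a very general $X$) $\CH^2$ are controlled by the Grassmannian hull and are generated by $1$, $H$, $K$ and their products, all of which lie in the subcategory generated by the exceptional collection, hence project to classes already expressible via $\pr[\cO_c(1)]$ and $\pr[\cO_p]$; meanwhile $\CH^4(X)_\QQ=\QQ\cdot[\cO_p]$ contributes exactly $\pr[\cO_p]$, and $\CH^3(X)_\QQ=\CH_1(X)_\QQ$ is generated by conics by Proposition \ref{ch}, contributing classes of the form $\pr[\cO_c]$. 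Finally I would reconcile $\pr[\cO_c]$ with $\pr[\cO_c(1)]$: twisting by $\cO_X(1)$ changes the Chern character only by lower-order (higher-codimension) corrections supported in $\CH^4$, so $\pr[\cO_c]$ and $\pr[\cO_c(1)]$ differ by a multiple of $\pr[\cO_p]$, and both are therefore in the span.

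Concretely, the steps are: (i) identify $K_0(\cA_X)\otimes\QQ$ with a quotient of $K_0(X)\otimes\QQ\cong\CH^*(X)_\QQ$; (ii) split off the pieces coming from the exceptional objects, which project to combinations of $\pr[\cO_c(1)]$ and $\pr[\cO_p]$ via the numerical computation $\lambda_1=\overline{\pr}[\cO_c(1)]$ and $\overline{\pr}[\cO_p]=-\lambda_1-2\lambda_2$; (iii) invoke Proposition \ref{ch} so that the $\CH^3=\CH_1$ contribution is spanned by conic classes; and (iv) check that the twist $\cO_c(1)$ versus $\cO_c$ only modifies the class by a multiple of the point class. The main obstacle I expect is step (ii), namely verifying carefully that the images of $[\cO_X]$, $[\cU_X^\vee]$, and their twists, together with the lower Chow groups, do not introduce a genuinely new generator beyond $\pr[\cO_c(1)]$ and $\pr[\cO_p]$; this requires keeping track of how the projection $\pr$ interacts with the Chern characters of the exceptional bundles and with the intersection-theoretic relations \eqref{intersection number} on the ambient Grassmannian hull, rather than any deep new geometric input.
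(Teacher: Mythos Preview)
Your overall strategy is close to the paper's, and the two essential inputs---Proposition~\ref{ch} (conics generate $\CH_1(X)_\QQ$) and the fact that $K_{\rm num}(\cA_X)_\QQ$ has rank two for very general $X$---are exactly what the paper uses. The organization, however, differs in a way that matters, and the gap you yourself flag in step~(ii) is real.

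Your argument treats the pieces $\CH^0,\CH^1,\CH^2$ separately and asserts that the classes $1,H,K$ ``lie in the subcategory generated by the exceptional collection'' and hence project harmlessly. That justification is not correct: the span of $[\cO_X],[\cU_X^\vee],[\cO_X(1)],[\cU_X^\vee(1)]$ in $K_0(X)_\QQ$ does not contain the pure-degree classes $H$ or $K$ under the Chern character, and more fundamentally the projection $\pr$ does not respect the Chow grading, so there is no a priori reason a class coming from $\CH^2$ cannot acquire a nontrivial $\CH_1$-homologically-trivial component after projection. Checking this by hand for each tautological class would work but is tedious and is not what the paper does.

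The paper avoids this entirely by working on the $\cA_X$ side rather than the $X$ side. The argument is: the Chern character embeds $K_0(\cA_X)_\QQ$ into $\CH^*(X)_\QQ$, and composing with the cycle class map $cl$ lands in the two-dimensional space $\overline{\ch}(K_{\rm num}(\cA_X)_\QQ)\subset \rH^*(X,\QQ)$. The key input you are missing is Bloch--Srinivas \cite{bloch1983remarks}: since $\CH_0(X)\cong\ZZ$, the cycle class map $cl:\CH^i(X)\to\rH^{2i}(X,\ZZ)$ is injective for $i\neq 3$. Hence the only possible discrepancy between $K_0(\cA_X)_\QQ$ and its numerical image lives in $\CH^3(X)_{\rm hom}=\CH_1(X)_{\rm hom}$, which by Proposition~\ref{ch} is spanned by differences $[c_1]-[c_2]$ of conic classes. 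This immediately gives the generation by $\pr[\cO_c(1)]$ (varying $c$) and $\pr[\cO_p]$, with no bookkeeping of exceptional bundles required. Replacing your step~(ii) by this Bloch--Srinivas reduction both closes the gap and shortens the argument.
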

\begin{proof}
Since $\CH_0(X)\cong\ZZ$, the class $[\cO_p]$ in $K_0(X)$ is independent of the choice of $p\in X$ and the cycle class map $cl: \CH^i(X)\rightarrow \rH^i(X,\ZZ)$ is injective for $i\neq 3$, by \cite[Theorem 1.1]{bloch1983remarks}. 

We have the following commutative diagram:
$$\begin{tikzcd}
K_0(\cA_X)\otimes \QQ \arrow[d, "\overline{\pr}"] \arrow[rr, "\ch"] &  & \CH^*(X)\otimes \QQ \arrow[d, "cl"] \\
K_{\rm num}(\cA_X)\otimes \QQ \arrow[rr, "\overline{\ch}"]                         &  & {\rH^*(X,\QQ)}    ,       
\end{tikzcd}$$
It's enough to show that $cl^{-1}(\overline{\ch}(K_{\rm num}(\cA_X)))$ is generated by $\ch(\cO_c(1))$ and $\ch(\cO_p)$ and this follows from the injectivity of the cycle class map for $i\neq 3$ and the fact that  $\CH_1(X)\otimes \QQ$ is generated by conics.
 \end{proof}
For a cubic fourfold, the link between the Kuznetsov component and 1-cycles is by taking the Chern class $c_3$. However, the situation is slightly different for GM fourfolds:

If $X$ and $X'$ are period partners, their associated dual double EPW sextics are the same by Theorem \ref{moduli}.
Therefore, their Fano varieties of conics both admit a birational $\PP^1$-fibration to $\tY^\vee$. Then by Lemma \ref{K0} there is a group isomorphism $$K_0(\cA_X)_\QQ\cong K_0(\cA_X')_\QQ,$$ which maps $\pr [\cO_p]$ to $\pr [\cO_{p'}]$ and $\pr [\cO_c(1)]$ to $\pr [\cO_{c'}(1)]$. Here $c'$ is a conic mapping to the same point with $c$ in $\tY^\vee$ via the fibration. 

However, the classes $c_3(\pr [\cO_p])$ and $c_3(\pr [\cO_{p'}])$ do not coincide in $\CH_0(\tY^\vee)$ via the isomorphism \eqref{chowiso} even modulo the class $o$. We want the link between the Kuznetsov components and 1-cycles intrinsic for the EPW sextic, so
we modify the $c_3$ to be compatible with \eqref{chowiso} by dropping the term of $\pr [\cO_p]$: 

If $[\cE]=\sum_i a_i\pr[\cO_{c_i}(1)]+b\pr[\cO_p]\in K_0(\cA_X)_\QQ$, we set $$p(\cE)=\sum a_ic_i\in \CH_1(X)\otimes \QQ.$$ It is independent of the choice of representations, since $b$ relies only on the numerical class of $[\cE]$. 
Let $\cF$ be a sheaf supported on a nonsingular connected rational curve $C\subset X$ of degree $e>0$. Then $[\cF]$ is numerically equivalent to $$re[\cO_l(1)]+m[\cO_p]$$ in $K_{\rm num}(X)$ for some integers $r>0$ and $m$, where $l$ is a line on $X$. Let 
$$d(i^*\cF):=\frac{1}{2}\dim \Ext^1_{\cA_X}(i^*\cF,i^*\cF)$$ and we have:

\begin{lemma}
If $\cF$ is supported on a line or a conic, then $d(i^*\cF)\geq 2$.
\end{lemma}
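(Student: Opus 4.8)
The plan is to compute the dimension $d(i^*\cF) = \tfrac{1}{2}\dim\Ext^1_{\cA_X}(i^*\cF,i^*\cF)$ via the Euler form on the numerical Grothendieck group $K_{\rm num}(\cA_X)$. First I would record that $\cA_X$ is a $K3$ category, so for any object $\cE \in \cA_X$ the Serre functor is the shift by $2$, which forces the Euler pairing to be symmetric and gives the standard identity
\begin{equation*}
\chi(\cE,\cE) = 2\dim\Hom(\cE,\cE) - 2\dim\Ext^1(\cE,\cE) + \dim\Ext^2(\cE,\cE) - \cdots
\end{equation*}
For an object of the form $i^*\cF$ with $\cF$ a sheaf on a rational curve, I expect the relevant cohomologies above degree $1$ to vanish or to be controlled by Serre duality, so that $\chi(i^*\cF, i^*\cF) = 2\dim\Hom - 2\dim\Ext^1 \le 2 - 2\,d(i^*\cF)$, giving the lower bound $d(i^*\cF) \ge 1 - \tfrac{1}{2}\chi(i^*\cF,i^*\cF)$ once I know $i^*\cF$ is simple (so $\dim\Hom = 1$). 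Thus the whole problem reduces to computing the self-pairing $\chi$ purely numerically.

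Next I would use the explicit description of $K_{\rm num}(\cA_X)$ given just above the lemma: it is $\ZZ^{\oplus 2}$ with basis $\lambda_1,\lambda_2$, Euler form $\mathrm{diag}(-2,-2)$, and $\lambda_1 = \overline{\pr}[\cO_c(1)]$, $\overline{\pr}[\cO_p] = -\lambda_1 - 2\lambda_2$. Since $\cF$ is supported on a line or conic, $[\cF]$ is numerically $re[\cO_l(1)] + m[\cO_p]$ as stated, and I would compute $\overline{\pr}[\cF]$ by expressing $\overline{\pr}[\cO_l(1)]$ and $\overline{\pr}[\cO_p]$ in the basis $\lambda_1,\lambda_2$. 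A line and a conic both map to $\lambda_1$ (a line is a degenerate $\sigma$-conic component, and $[\cO_l(1)]$ should project to the same primitive class up to the $[\cO_p]$-correction), so I would write $\overline{\pr}[i^*\cF] = a\lambda_1 + b\lambda_2$ for explicit small integers $a,b$ and then evaluate $\chi = -2a^2 - 2b^2$ using the given diagonal form. With $\chi = -2(a^2+b^2)$ the bound becomes $d(i^*\cF) = 1 + (a^2+b^2)$, and the claim $d \ge 2$ amounts to checking $(a,b) \neq (0,0)$, i.e.\ that $i^*\cF \neq 0$ numerically.

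The main obstacle, and the step requiring the most care, is the passage from the \emph{numerical} Euler characteristic to the \emph{actual} dimension of $\Ext^1$: the inequality $d(i^*\cF) \ge 2$ rather than an equality reflects that $\dim\Hom(i^*\cF,i^*\cF)$ may exceed $1$ and that higher $\Ext$'s may contribute with signs. I would handle this by arguing that $i^*\cF$ is a sheaf-like object whose only non-negative contributions come from $\Hom$ (at least $1$-dimensional by nonzeroness) and $\Ext^1$, so that $-\chi(i^*\cF,i^*\cF) \le 2\dim\Ext^1 - 2$, whence $d(i^*\cF) \ge 1 + (a^2+b^2) \ge 2$ as soon as $(a,b)\neq(0,0)$. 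The genuinely delicate point is verifying that $i^*\cF$ is a nonzero object of $\cA_X$ with $\overline{\pr}[\cF]$ having at least one nonzero coordinate; I expect this to follow from the explicit formula for $\overline{\pr}[\cO_c(1)] = \lambda_1$ together with the fact that $[\cO_l(1)]$ and $[\cO_c(1)]$ have the same projection up to the $[\cO_p]$-term, so that a line or conic never projects to zero. Once nonvanishing is secured, the bound $d(i^*\cF) \ge 2$ is immediate from the numerical computation.
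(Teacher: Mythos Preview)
Your strategy matches the paper's: compute the numerical class $\overline{\pr}[\cF] = a\lambda_1 + b\lambda_2$, evaluate $\chi = -2(a^2+b^2)$ via the given Euler form, and bound $d(i^*\cF) \geq 1 - \tfrac{1}{2}\chi = 1 + a^2 + b^2$. The paper carries this out explicitly, obtaining $1 + a^2 + b^2 = 5m^2 + 4mre + r^2e^2 + 1$ and checking it is $\geq 2$ for $e \in \{1,2\}$, $r > 0$.

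Two things in your writeup need fixing. First, your displayed expression for $\chi(\cE,\cE)$ is garbled: the alternating sum $\sum_i(-1)^i\dim\Ext^i$ together with Serre duality $\Ext^i \cong (\Ext^{2-i})^\vee$ gives $\chi = 2\dim\Ext^0 - \dim\Ext^1$ (assuming $\Ext^i = 0$ for $i \notin \{0,1,2\}$), not $2\dim\Hom - 2\dim\Ext^1$; your stated bound $d \geq 1 - \tfrac{1}{2}\chi$ is nonetheless the correct one, and you do not need $i^*\cF$ simple, only nonzero. Second, your assertion that ``a line and a conic both map to $\lambda_1$'' is not justified and is not what the paper finds: it computes $\overline{\pr}[\cO_l(1)]$ separately from $\lambda_1 = \overline{\pr}[\cO_c(1)]$ and combines it with $\overline{\pr}[\cO_p] = -\lambda_1 - 2\lambda_2$ to get the explicit coordinates $(a,b) = (-m,-(2m+re))$. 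You should actually carry out that computation rather than hand-wave about ``the same primitive class up to the $[\cO_p]$-correction''; once done, the nonvanishing $(a,b) \neq (0,0)$ is immediate from $r,e > 0$.

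You are right to flag the passage from $\chi$ to $\dim\Ext^1$ as the delicate step: the inequality $d \geq 1 - \tfrac{1}{2}\chi$ requires $\dim\Hom(i^*\cF,i^*\cF) \geq 1$ and control on $\Ext^{<0}$, and $i^*\cF$ is a complex, not a sheaf in any obvious heart. The paper simply asserts the inequality (with a sign slip) without further comment, so neither argument addresses this point in detail.
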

\begin{proof}
By straightforward computation, we have $\overline{\pr} [\cO_p]=-\lambda_1-2\lambda_2$ and $\overline{\pr} [\cO_l(1)]=-\lambda_1$. Hence $\overline{\pr} [\cE]=-m\lambda_1-(2m+re)\lambda_2$.
Then 
$$d(i^*\cF)\geq \frac{1}{2}\chi (i^*\cF,i^*\cF)+1=5m^2+4mre+r^2e^2+1.$$
For $e=1$ or $2$, we can easily deduce that $d(i^*\cF)\geq 2$. In fact, there is a lower bound by a quadric polynomial in $e$, but we only deal with $e=1$ or $2$.
\end{proof}

Now we can prove the main theorem:
\begin{theorem}\label{maint}
If $\cF$ is supported on a line or a conic and $\cE=i^*\cF$, then
\[
p(\cE) \in \rS_{d(\cE)}(X).
\]
\end{theorem}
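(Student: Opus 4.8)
The plan is to reduce Theorem~\ref{maint} to the statement, already essentially in hand, that lines and conics on $X$ lie in $\rS_2(X)$. Since $d(\cE)\geq 2$ by the preceding lemma, it suffices to prove $p(\cE)\in \rS_2(X)$, and because $\rS_\bullet(X)$ is an increasing filtration this will give the desired inclusion into $\rS_{d(\cE)}(X)$. So the whole problem collapses to understanding the class $p(\cE)\in\CH_1(X)\otimes\QQ$ and showing it is a combination of conics whose images under $\alpha$ land on a uniruled divisor $D\subset\tY^\vee$, modulo the distinguished class $\theta=\Psi(o)$.

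First I would handle the numerics. For $\cF$ supported on a nonsingular rational curve $C$ of degree $e\in\{1,2\}$, we have $[\cF]$ numerically equal to $re[\cO_l(1)]+m[\cO_p]$, so $\pr[\cF]=re\,\pr[\cO_l(1)]+m\,\pr[\cO_p]$ in $K_0(\cA_X)_\QQ$ after projecting. By the definition of the modification $p$, the skyscraper term is discarded and $p(\cE)=re\,[\,l\,]$ when $C$ is a line, and $p(\cE)=re\,[\,c\,]$ (up to rewriting $\cO_c(1)$-contributions) when $C$ is a conic. Here I must be a little careful: the projection formula together with the definition $p\bigl(\sum a_i\pr[\cO_{c_i}(1)]+b\,\pr[\cO_p]\bigr)=\sum a_i c_i$ shows that $p(\cE)$ is, up to the rational multiple $re$, simply the class of the supporting curve in $\CH_1(X)\otimes\QQ$. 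The point is that $p(\cE)$ is a $\QQ$-multiple of $[l]$ or $[c]$.

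Next I would invoke the two structural results proved earlier. Proposition~\ref{ch} gives that $\CH_1(X)_\QQ$ is generated by conics, and more usefully the class of any line is controlled by conics: a degenerate $\sigma$-conic is $l\cup\iota_1(l)$, so $[l]$ is expressed through $\sigma$-conic classes. The essential input is Proposition~\ref{conic}, which states that every conic $c\subset X$ satisfies $c\in\rS_2(X)$. For the conic case this is immediate: $p(\cE)=re\,[c]$ with $[c]\in\rS_2(X)$, and since $\rS_2(X)$ is closed under the scaling built into the definition of the filtration (the relation $az=\sum a_jc_j+a_0\theta$ tolerates the multiplier $a$), we conclude $p(\cE)\in\rS_2(X)\subset\rS_{d(\cE)}(X)$. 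For the line case I would reduce $[l]$ to conic classes: writing $l$ as a component of a $\sigma$-conic and using that $\sigma$-conics map to the point $y_1$ (hence lie in the fiber structure controlled by $\theta$ and by conics on a nearby Del~Pezzo surface $S_{V_4}$), one sees $[l]$ is an integral combination of conic classes that already sit in $\rS_2(X)$. Concretely, showing $[l]\in\rS_2(X)$ amounts to exhibiting $l$ inside a surface $S_{V_4}$ on which $l$ is linearly equivalent to a difference of conics already known to lie in a uniruled divisor after applying $\alpha$.

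The main obstacle I anticipate is the line case, specifically making precise that the supporting lines and conics genuinely land in the $2$nd piece $\rS_2(X)$ defined via a uniruled divisor $D$, rather than merely in some abstract span of conics. Proposition~\ref{conic} delivers the conic statement through the $8$-nodal GM threefold construction and the Albanese surjection $R'^{(2)}\twoheadrightarrow \mathrm{Alb}(\tilde S)$, and I would need the analogous assertion for lines — that the class of a line decomposes as $c_1+c_2-\theta$ with $\alpha(c_i)\in D$. This should follow by the same period-partner flexibility granted by Corollary~\ref{partner} together with the divisibility of $\CH_1(X)_{\rm alg}$ used in Proposition~\ref{ch}, so that after clearing denominators the class of $l$ is an honest combination of $\rS_1$-conics and $\theta$. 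The delicate bookkeeping is to verify that the rational multiplier $re$ and the subtraction of the $\pr[\cO_p]$-term introduce no obstruction, which is exactly why the filtration was defined with the integral multiplier $a$ in $az=\sum a_jc_j+a_0\theta$; I expect this to absorb all the denominators cleanly.
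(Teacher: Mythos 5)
Your reduction is exactly the paper's first move: since $d(\cE)\geq 2$ and $\rS_\bullet(X)$ is increasing, everything comes down to showing that every line and every conic lies in $\rS_2(X)$, and the conic case is indeed just Proposition \ref{conic}. The genuine gap is the line case, which you yourself flag as "the main obstacle," and none of the mechanisms you propose closes it. Writing $l$ as a component of a $\sigma$-conic only gives $[l]+[\iota_1(l)]=\sigma$, a relation on the \emph{sum} of two lines; it says nothing about $[l]$ unless you already control $[\iota_1(l)]$, which is the whole problem. Expressing $[l]$ through conics on a Del Pezzo $S_{V_4}$ fares no better: inside $\operatorname{Pic}(S_{V_4})$ a line is \emph{not} an integral combination of conic classes (the conic classes $H-E_i$ and $2H-\sum_{j\neq i}E_j$ all have even value under $a H-\sum b_iE_i\mapsto a-\sum b_i$, while every line has odd value), and the shortest rational relation, e.g. $2l\sim c_2+c_3+c_4-c_5-c_w$, involves conics from five distinct pencils mapping to five distinct points of $\tY^\vee$. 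Feeding each of those conics into Proposition \ref{conic} lands you in something like $\rS_{10}(X)$, not $\rS_2(X)$: the filtration counts the number of conics on the uniruled divisor, so relations with many conics are useless for the bound you need, and since the only available estimate is $d(\cE)\geq 2$, it is precisely $\rS_2(X)$ you must hit. Finally, "period-partner flexibility plus divisibility" (Corollary \ref{partner} and Proposition \ref{ch}) does not manufacture a decomposition $c_1+c_2-\theta$ for a line; at that point you are asserting the conclusion. The denominators were never the issue — the number of conics is.

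What the paper actually does for lines is new geometry, not bookkeeping. First, the locus of double lines gives a surface in $\tY^\vee$, and since $X$ is very general this surface must meet the constant cycle surface $Z$ by Proposition \ref{o}; this produces a special line $l_0$ with $2l_0=\theta$ in $\CH_1(X)$, and $l'=\iota_1(l_0)$ then satisfies $l_0+l'=\sigma$. Second, for a general line $l$ the paper builds two surfaces through the relevant configuration — the Del Pezzo $S_{V_4}$ and a degree-$6$ surface $S_{V_2}$ containing both $l$ and $l'$ — and compares hyperplane sections: one section gives $l+C_3=2\theta$, the other gives $l'+C_3+c=2\theta+\sigma$ for a \emph{single} conic $c$. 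Subtracting yields $l=l'+c-\sigma=c-l_0$, hence $2l=2c-\theta$, a relation with one conic and $\theta$; combined with $c\in\rS_2(X)$ this puts $l$ in $\rS_2(X)$. The existence of $l_0$ via the intersection with $Z$ and the residual-curve comparison on $S_{V_2}$ are the two ideas your proposal is missing, and without them the line case does not go through.
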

\begin{proof}
By the definition of $p$, it is enough to show that every line or conic belongs to $\rS_2(X)$. For conics, it is proved in Proposition \ref{conic}. For lines, the strategy is to degenerate the residual curve of a line on a certain surface to a sum of conics and lines.

The locus in $F(X)$ and its image in $\tY^\vee$ parameterizing double lines is a surface  by \cite[Lemma 3.8]{iliev2011fano}. Since $X$ is very general,  this surface meets $Z$  by Proposition \ref{o}. Hence, there exists a line $l_0\subset X$ such that $2l_0= \theta$. Take $\PP(V_1'\bigwedge V_3')=l'=\iota_1(l_0)$,  we obtain
$$2l'=2\sigma-\theta,$$
where $\iota_1$ is the rational involution of the Hilbert scheme of lines $F_1(X)$.

For $l=\PP(V_1\bigwedge V_3)\subset X$ a general line, let $V_2=V_1\oplus V_1'$ and $V_4=V_1'\oplus V_3$ and $S_{V_2}$ be the surface parameterizing points $x\in X$ with  $V_x\cap V_2\neq \emptyset$, where $V_x$ is the two dimensional vector space corresponding to $x$. We see that $S_{V_2}$ is a degree $6$ surface in $\PP(V_2\bigwedge V_5)\cap H=\PP^5$.
By construction, $l$ and $l'$ are contained in $S_{V_2}$ and $S_{V_2}\cap S_{V_4}$ is a hyperplane section of $S_{V_4}\subset \PP(\bigwedge^2V_4)\cap H$, which is the union of $l$ and a degree $3$ curve $C_3$. Hence
$$l+C_3=2\theta .$$ 
Since $V_3'\cap V_4$ is 2-dimensional, $l'$ meets $S_{V_4}$ at the point corresponding to $V_3'\cap V_4$. For a general choice of $l$, it does not meet $l'$. It follows that $l'$ and $C_3$ meet at a point, and therefore they span a $\PP^4\subset \PP(V_2\bigwedge V_5)\cap H$, which cuts $S_{V_2}$ into a degree $6$ curve $l'\cup C_3\cup c$ for some conic $c$. 

We claim that 
$$l'+C_3+c=2\theta+\sigma.$$
In fact, we can choose the hyperplane section $\PP^4\subset \PP(V_2\bigwedge V_5)\cap H$ to be the span of  $\PP(V_2\bigwedge V_4)\cap H$ and $\PP(V_1\bigwedge V_1^{\perp})$ for some $V_1\subset V_2$ with $V_1^{\perp}\neq V_4$. Then the class of a section is $l+C_3+c'$, which equals $2\theta+\sigma$. Here $c'$ is the $\sigma$-type conic associated with $V_1$.

Finally, we obtain that $l=l'+c-\sigma=c-l_0$, which is in $S_2(X)$ by Proposition \ref{conic}.
\end{proof}

%    Insert the bibliography data here.

\end{document}